\numberwithin{equation}{section}
\theoremstyle{plain}
\newtheorem{thm}{Theorem}[section]
\newtheorem{lemma}[thm]{Lemma}
\newtheorem{prop}[thm]{Proposition}
\newtheorem{cor}[thm]{Corollary}
\newtheorem{claim}[thm]{Claim}
\newtheorem{problem}[thm]{Problem}
\theoremstyle{definition}
\newtheorem{defn}[thm]{Definition}
\newtheorem{ex}[thm]{Example}
\newtheorem{conject}[thm]{Conjecture}
\newtheorem{remark}[thm]{Remark}
\newenvironment{customthm}[1]
{\innercustomthm}
{\endinnercustomthm}
\newcommand{\R}{\mathbf{R}}
\newcommand{\Z}{\mathbf{Z}}
\newcommand{\ehr}{\mathrm{Ehr}}
\DeclareMathOperator{\conv}{Conv}
\DeclareMathOperator{\dist}{dist}
\def\jaeg{{\mathcal{J}}}
\def\cG{{\mathcal{G}}}
\definecolor{light-gray}{gray}{0.8}
\definecolor{v}{rgb}{0.28,0,0.72}
\definecolor{e}{rgb}{0,1,0.2}
\definecolor{r}{rgb}{1,0,0}
\begin{document}

\title[Ehrhart theory of symmetric edge polytopes via ribbon structures]{Ehrhart theory of symmetric edge polytopes via ribbon structures}

\author{Tam\'as K\'alm\'an}%\thanks{TK was supported by a Japan Society for the Promotion of Science (JSPS) Grant-in-Aid for Scientific Research C (no.\ 17K05244).} 
\address{Department of Mathematics\\
Tokyo Institute of Technology\\
H-214, 2-12-1 Ookayama, Meguro-ku, Tokyo 152-8551, Japan}
\email{kalman@math.titech.ac.jp}

\author{Lilla T\'othm\'er\'esz}%\thanks{LT was supported by the NSF grant DMS-1455272 and the National Research, Development and Innovation Office of Hungary -- NKFIH, grants no.\ 132488.}
\address{MTA-ELTE Egerv\'ary Research Group, P\'azm\'any P\'eter s\'et\'any 1/C, Budapest, Hungary}
\email{tmlilla@caesar.elte.hu}
\date{}

\begin{abstract}
Using a ribbon structure of the graph, we construct a dissection of the symmetric edge polytope of a graph into unimodular simplices. 
Our dissection is shellable, and one can interpret the elements of the resulting $h$-vector via graph theory. This gives an elementary method for computing the $h^*$-vector of the symmetric edge polytope.

\end{abstract}

\maketitle

\section{Introduction}
\label{sec:intro}

Let $\cG$ be a simple graph with vertex set $V(\cG)$ and edge set $E(\cG)$.
The polyhedron $$P_\cG =\conv \{\mathbf{u}-\mathbf{v}, \mathbf{v}-\mathbf{u} \mid uv\in E(\cG)\} \subset \mathbb{R}^{V(\mathcal{G})}$$ is called the \emph{symmetric edge polytope} of $\cG$ \cite{Sym_edge_appearance}.
(Here $\mathbf u, \mathbf v$ stand for generators of $\mathbb R^{V(\cG)}$ that correspond to $u,v\in V(\cG)$.)
The $h^*$-vector of the symmetric edge polytope is a palindromic polynomial that has %recently 
been actively investigated; 
%in several papers 
see \cite{arithm_symedgepoly,DDM19,Sym_edge_appearance,OT} and references within. 
%In particular, Ohsugi and Tsuchiya  \cite{OT} show that the $h^*$-vector of a symmetric edge polytope is symmetric, and they conjecture that it is $\gamma$-positive. 
In particular, Ohsugi and Tsuchiya conjecture \cite{OT} that the $h^*$-vector of a symmetric edge polytope is $\gamma$-positive. 
As pointed out in \cite{DDM19}, the symmetric edge polytope is also relevant in physics, where an upper bound for the number of steady states of the Kuramoto synchronization model can be obtained via its volume. 
%of the symmetric edge polytope.}

In this paper we give a general method for computing the $h^*$-vector of the symmetric edge polytope of a graph. Our method only uses elementary notions of graph theory and can be converted into an exponential time algorithm.
%(Esetleg lehet algoritmusnak is hívni, de talán inkább ne. A futáidő mindenképpen exponenciális. Kicsit lehet optimalizálni, de minek.) A gamma-pozitivitás irányába is teszünk egy lépést.

Higashitani, Jochemko, and Micha{\l}ek described the facets of $P_\cG$ \cite{arithm_symedgepoly} and in a previous paper \cite{semibalanced} we treated exactly that class of polytopes. 
The facets are indexed by certain spanning subgraphs of $\cG$ that are also endowed with an orientation.
Here we build on the machinery of \cite{semibalanced} %developed in our 
to dissect the symmetric edge polytope into unimodular simplices in a shellable manner. 
The basic idea is to dissect each facet in a shellable way as we did in \cite{semibalanced}, construct cones over them whose common apex is the origin, and merge the shellings into a shelling order of the whole construction.
This way we can read off the $h^*$-polynomial as the $h$-polynomial of the shelling.

We interpret the simplices in the dissection, as well as the coefficients of the $h^*$-polynomial, in terms of graph theory. More exactly, the simplices in the dissection correspond to special spanning trees (so called Jaeger trees) of the aforementioned oriented spanning subgraphs of $\cG$. Here Jaeger trees, see Definition \ref{def:jaegertree}, are defined using an arbitrarily fixed ribbon structure of $\cG$ and a graph traversal due to Bernardi \cite{Bernardi_first}. 
Roughly speaking, the condition is that each edge that is \emph{not} part of the tree, is first reached during the traversal at its tail.
The notion of these trees is inspired by the knot-theoretical work of F. Jaeger \cite{Jaeger}. 
Our main claim is that 

\begin{customthm}{\ref{cor:h^*_of_sym_edge_poly}}
The coefficient $(h^*_{P_\cG})_i$ equals the number of Jaeger trees so that, during the traversal, there are exactly $i$ edges that are \emph{in the tree} and are first reached at their tail.
\end{customthm}

The resulting computation of $h^*_{P_\cG}$ is lengthy (due to the large number of oriented subgraphs to be considered and the multitude of Jaeger trees in each) but eminently doable for any graph. In particular, no polytopes are considered during the  actual process.

Furthermore, one can use our formula to prove that the coefficient $\gamma_1$ of the linear term of the $\gamma$-polynomial of $h^*_{P_\cG}$ is nonnegative for any graph $\cG$. More precisely, we show 

\begin{customthm}{\ref{thm:gamma(1)=2g}}
    For any connected, simple, undirected graph $\cG$ we have $$\gamma_1=2g,$$ where $g=|E(\cG)|-|V(\cG)|+1$ is the so called cyclomatic number (or nullity or first Betti number) of $\cG$.
\end{customthm}
%\end{thm}

The same formula appears in an extremely recent announcement\footnote{Their preprint \cite{dalietal} was submitted one day ahead of ours.} by D'Al\`i et al. They show this by a quick calculation based on the existence of a unimodular triangulation, cf.\ \cite[Lemma 3.1]{dalietal}. Our approach yields a much longer proof but it does provide an explicit description of the simplices in the shelling order that are attached along exactly one facet.

If $\cG$ is bipartite with partite classes $U$ and $W$, the \emph{root polytope} 
\[\mathcal{Q}_\cG=\conv\{\mathbf{u}-\mathbf{w}\mid u\in U, w\in W, uw\in E(\cG)\}\]
is a facet of the symmetric edge polytope. 
%As we did in \cite{semibalanced}, w
We will also call the $h^*$-vector of $\mathcal{Q}_\cG$ the \emph{interior polynomial} of $\cG$.
(Cf.\ \cite{KP_Ehrhart} and Kato's clarification \cite{Kato} of its main result. The interior polynomial was first defined in \cite{hiperTutte}.)
Ohsugi and Tsuchiya proved \cite[Theorem 5.3]{OT} that for a certain special class of graphs, the $\gamma$-polynomial of $h^*_{P_\cG}$ is a positive linear combination of interior polynomials of some subgraphs. Such a formula implies $\gamma$-positivity for the symmmetric edge polytope of these graphs. In Section \ref{sec:connection_of_gamma_and_interior}, we collect further results suggesting a strong connection of the $\gamma$ and interior polynomials, and formulate some conjectures.

Previous results about the $h^*$-vector of the symmetric edge polytope mostly relied on Gröbner bases of the toric ideal associated to $P_\cG$, and a resulting regular unimodular triangulation of $P_\cG$.
Even though our approach and the method of Gröbner bases are conceptually very different, they sometimes produce the same subdivision. For example in the case of a complete bipartite graph, with the right ribbon structure, our method yields the same triangulation that was  obtained using Gröbner bases \cite{arithm_symedgepoly} (see Section \ref{sec:concrete_cases}).
To summarize the comparison, our approach is less algebraic and more combinatorial. Gröbner bases provide regular triangulations for the symmetric edge polytope, while we can only guarantee a shellable dissection. %{\color{red} Akkor is ad triangulációt ha nem redukált a bázis? Pontosan mivel rosszabb egy nem redukált Gröbner bázis?} Igen. Ugy tunik nekem hogy a redukaltsag csak egy kenyelmi szempont, hogy konnyebb legyen eldonteni hgy egy adott szimplex benne van-e a triangulacioban.
To us, it is not clear whether Gröbner bases directly yield a graph-theoretic interpretation for the $h^*$-vector the way our method does.

To demonstrate the usefulness of our method in a more concrete setting, in Section \ref{sec:concrete_cases}, we revisit some further graph classes where $h^*_{P_\cG}$ was computed previously using Gröbner bases. 
%Már \cite{DDM19}-ben is azt csinálták hogy triangulálták a lapokat és összekötötték az origóval. Ott ők Gröbner bázisokat használtak a triangulációhoz. Mi itt a \cite{semibalanced} cikkben kifejlesztett Jaeger-disszekciókat fogjuk alkalmazni.

%Gröbner módszerrel van egy általános képlet triangulációra, de az nem világos mennyire jól kezelhető (nem redukált pl a Gröbner bázis), pl a teljes gráfos cikkben nem is azt használják.

Finally, in Section \ref{sec:geometric_formula_for_volume}, we give a geometric formula for the volume of the symmetric edge polytope of a bipartite graph. Namely, we identify a set of points such that the volume of the polytope is equal to the sum of the numbers of facets visible from the given points.

Acknowledgements:
TK was supported by a Japan Society for the Promotion of Science (JSPS) Grant-in-Aid for Scientific Research C (no.\ 17K05244). LT was supported by the National Research, Development and Innovation Office of Hungary -- NKFIH, grant no.\ 132488, by the János Bolyai Research Scholarship of the Hungarian Academy of Sciences, and by the ÚNKP-21-5 New National Excellence Program of the Ministry for Innovation and Technology, Hungary. 
LT was also partially supported by the Counting in Sparse Graphs Lendület Research Group of the Rényi Institute.

\section{Preparations} \label{sec:prep}

\subsection{Ehrhart theory essentials}

Suppose that $P\subset\R^n$ is a $d$-dimensional
%lattice ???
polytope with vertices in $\Z^n$. 
We define its \emph{Ehrhart series} as $$\ehr_P(t)=\sum_{k=0}^\infty|(k\cdot P)\cap\Z^n|\,t^k.
$$
It is well known that the Ehrhart series can be written in the closed form
$$
\ehr_P(t) = \frac{h^*(t)}{(1-t)^{d+1}},
$$
where $h^*(t)$ is a polynomial
%. This polynomial is 
called the \emph{$h^*$-polynomial} (or \emph{$h^*$-vector}) of $P$.

One possible way to compute the $h^*$-vector of $P$ is via the $h$-vector of a shellable dissection of $P$ into unimodular simplices, that we now explain.
This technique was already established for triangulations before we extended its scope in our earlier work \cite{hyperBernardi,semibalanced}, but we summarize it here again for easier readability.

A \emph{dissection} of a polytope $P$ is a set of mutually interior-disjoint maximal dimensional simplices, whose union is the polytope.
A dissection is \emph{shellable} if it admits a \emph{shelling order}. That means that the simplices of the dissection are listed as $\sigma_1,\sigma_2,\ldots,\sigma_N$, in such a way that for each $i=2,3,\ldots,N$, the intersection of $\sigma_i$ with the `earlier' simplices,
\[ \sigma_i\cap\left(\bigcup_{j=1}^{i-1}\sigma_j\right), \]
coincides with the union of a positive number of facets (codimension $1$ faces) of $\sigma_i$. 

Let us denote the number of said facets by $r_i$ for $i=2,3,\ldots,N$ and let us also put $r_1=0$. Then the \emph{$h$-vector} associated to the shelling order is the distribution of the statistic $r_i$. 
%We often express it as a polynomial, i.e.,
%That is, it is 
We may write it as a finite sequence $(h_0,h_1,\ldots)$, where $h_k$ is the number of simplices $\sigma_i$ with $r_i=k$. Note that $h_0=1$ and since each maximal simplex has $d+1$ facets, the subscript of the last non-zero term is at most $d+1$. In fact, it is at most $d$ by a topological argument: each $\sigma_i$ with $r_i=d+1$ (i.e., a $d$-cell attached along its entire boundary)
%, the set $\bigcup_{j=1}^{i-1}\sigma_j$ would be contractible by an easy inductive proof
would add an infinite cyclic summand to the $d$-dimensional homology group of the polytope, but that group is $0$.

Alternatively, we may express the $h$-vector in the polynomial form
\begin{equation}
\label{eq:h-hejazott}
h(x)=h_{d}\,x+h_{d-1}\,x^2+\cdots+h_1\,x^{d}+h_0\,x^{d+1}.
\end{equation}
%This definition is a direct extension of the well known description of the $h$-vector for shellable triangulations. 

Now in the special case when the simplices in the dissection are unimodular, we have the following result, which will be a basic tool in this paper.

\begin{prop}
\label{prop:h-dissect}\cite{semibalanced}
For any shellable dissection of a $d$-dimensional lattice polytope into unimodular simplices, and for any shelling order, the $h$-polynomial \eqref{eq:h-hejazott} determines the $h^*$-polynomial of the polytope as
\[h^*(t)=t^{d+1}h(1/t)=h_0+h_1\,t+\cdots+h_{d-1}\,t^{d-1}+h_d\,t^d.\]
\end{prop}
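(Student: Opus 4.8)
The plan is to compute the Ehrhart series directly from the shellable dissection, summing contributions cell by cell according to the shelling order. The key observation is that a unimodular simplex is a lattice-point-preserving affine image of the standard simplex, so its Ehrhart series is well understood, and more importantly, when we add a new simplex $\sigma_i$ along exactly $r_i$ of its facets, the lattice points we gain are precisely those in the (relatively) half-open simplex obtained by removing those $r_i$ facets from $\sigma_i$.

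First I would recall the elementary fact that for the half-open $d$-simplex $\Delta$ with $r$ of its $d+1$ facets removed, the generating function $\sum_{k\ge 0} |(k\Delta)\cap \Z^n|\, t^k$ equals $t^r/(1-t)^{d+1}$. This is standard for the standard simplex (decompose a lattice point of the $k$-th dilate by which `slack coordinates' vanish), and it transfers to any unimodular simplex because unimodularity means the affine map carrying the standard simplex onto $\sigma_i$ is a lattice isomorphism on the relevant affine lattice, taking facets to facets and dilates to dilates. Second, I would use the shelling order to write $P$ as a disjoint union: $\sigma_1$ contributes all of itself, and for $i\ge 2$, by the definition of shelling, $\sigma_i \cap \bigcup_{j<i}\sigma_j$ is exactly the union of $r_i$ facets of $\sigma_i$, so the \emph{new} lattice points contributed at stage $i$ are exactly those in the half-open simplex $\sigma_i$ minus those $r_i$ facets. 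Hence, summing over $i$ and using the previous fact,
\[
\ehr_P(t) = \sum_{i=1}^{N} \frac{t^{r_i}}{(1-t)^{d+1}} = \frac{\sum_{k} h_k\, t^{\,d+1-k}}{(1-t)^{d+1}} = \frac{t^{d+1}h(1/t)}{(1-t)^{d+1}},
\]
where the middle equality just regroups the $\sigma_i$ by the value of $r_i$ and uses the definition of the $h$-vector. Comparing with $\ehr_P(t) = h^*(t)/(1-t)^{d+1}$ gives $h^*(t) = t^{d+1}h(1/t)$, which expands to $h_0 + h_1 t + \cdots + h_d t^d$ using $h(x)$ from \eqref{eq:h-hejazott}.

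The main obstacle, and the only place one must be careful, is the combinatorial bookkeeping that the half-open pieces genuinely partition $P\cap\Z^n$ and all its dilates: one must check that a lattice point of $kP$ lies in $k\sigma_i$ for a \emph{unique} smallest index $i$, and that for that $i$ it avoids all $r_i$ removed facets of $\sigma_i$ (it may lie on other, non-removed facets of $\sigma_i$, which is fine). This is where the precise shelling condition — that the intersection with earlier cells is \emph{exactly} a union of $r_i$ facets, not more and not less — is used. A secondary point worth stating explicitly is that for a dissection (as opposed to a triangulation) the cells may meet in lower-dimensional pieces that are not faces of either cell; but this does not affect the argument, since we only ever remove full facets of $\sigma_i$ and the half-open cells are still pairwise disjoint and cover $P$. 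Since \cite{semibalanced} already records Proposition~\ref{prop:h-dissect}, I would present this as a concise recollection of that argument rather than a new proof, emphasizing the half-open decomposition as the conceptual core.
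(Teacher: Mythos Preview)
The paper does not prove Proposition~\ref{prop:h-dissect}; it is simply quoted from \cite{semibalanced} as a basic tool, so there is no in-paper argument to compare against. Your half-open-cell argument is the standard one and is essentially what is carried out in \cite{semibalanced} (and, for triangulations, in the classical references): partition $P$ into the half-open unimodular simplices $\sigma_i$ minus their $r_i$ attaching facets, note that the Ehrhart series of each such piece is $t^{r_i}/(1-t)^{d+1}$, and sum.

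One small slip: in your displayed chain the middle numerator should read $\sum_k h_k\,t^{k}$, not $\sum_k h_k\,t^{d+1-k}$, since grouping the $\sigma_i$ by the value $r_i=k$ gives $\sum_i t^{r_i}=\sum_k h_k t^k$. This \emph{is} equal to $t^{d+1}h(1/t)$ for the $h(x)$ of \eqref{eq:h-hejazott}, so your final conclusion is unaffected. Your remark that the disjointness/covering of the half-open pieces only uses that $\sigma_i\cap\bigcup_{j<i}\sigma_j$ is \emph{exactly} a union of facets of $\sigma_i$, and not that the dissection is a triangulation, is the point that distinguishes the dissection case and is worth keeping.
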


If a polynomial $p(x)=c_dx^d+\dots + c_1x + c_0$ is palindromic (that is, $c_i=c_{d-i}$ for each $i=1, \dots, d$), then it can be written in the form 
$$p(x)= (x+1)^d \sum_{i=1}^{\lceil \frac{d}{2}\rceil} \gamma_i \cdot \left(\frac{x}{(x+1)^2}\right)^i.$$ 
In this case $\gamma(y) = \sum_{i=1}^{\lceil \frac{d}{2}\rceil} \gamma_i\, y^i$ is called the \emph{$\gamma$-polynomial} of $p$.

A palindromic polynomial $p$ is called \emph{$\gamma$-positive} if all coefficients of its $\gamma$-polynomial are nonnegative. This is a strong property, which implies the unimodality of $p$.

The $h^*$-polynomial of the symmetric edge polytope is palindromic \cite{OT}, hence $P_\cG$ has a $\gamma$-polynomial
%$h^*$-polynomial of the symmetric edge polytope of a graph $\cG$ has a $\gamma$-polynomial. 
for any graph $\cG$. We denote this polynomial by $\gamma_\cG$ and call it the \emph{$\gamma$-polynomial} of $\cG$.

\subsection{Graphs and digraphs}

In this paper 
%we adopt the convention that 
we denote undirected graphs by calligraphic letters, e.g. $\cG$, while we denote directed graphs by regular capital letters, e.g., by $G$. %, G'$ or $G_i$.

For an undirected graph $\cG$, we denote its set of vertices by $V(\cG)$ and its set of edges  by $E(\cG)$. We write undirected edges as $uv$, where $u,v\in V(\cG)$ are the two endpoints.

For a directed graph $G$, we also denote its set of vertices by $V(G)$ and its set of (directed) edges by $E(G)$. 
We denote an edge of $G$ by $\overrightarrow{uv}$ if $u$ is the tail and $v$ is the head of the edge. If we do not want to specify which endpoint of the edge is the head and which one is the tail, we simply write $uv$. A lowercase letter, say $e$, might denote a directed or an undirected edge, depending on the context. We write $\overrightarrow{e}$ if we want to stress that $e$ is directed. Occasionally, $\overrightarrow{e}$ and $\overleftarrow{e}$ will denote the two oppositely oriented versions of the edge $e$.

In this paper, we typically consider directed graphs $G$ whose vertex sets agree with the vertex set of some undirected graph $\cG$, and the edge set of $G$, if we ignore the orientations, is a subset of the edge set of $\cG$. 
If we want to emphasize that $uv\in E(\cG)$ is also in $E(G)$, then we say that $uv$ is \emph{present} in $G$. 
If $G$ is an oriented subgraph of $\cG$ and $uv\in E(\cG)-E(G)$, then we say that $uv$ is a \emph{hidden edge} for $G$. 
In our figures, we will denote hidden edges by dotted lines. See Figure \ref{fig:facet_graphs} for an undirected graph and two oriented subgraphs of it. 

A \emph{cut} in a graph $\cG=(V,E)$ is a set of edges $C^*$ that contains exactly the edges going between $V_0$ and $V_1$ for some partition (into nonempty parts) $V_0 \sqcup V_1 = V$. We sometimes denote a cut by its vertex partition $(V_0,V_1)$, and call $V_0$ and $V_1$ the \emph{sides} of the cut.
In a directed graph,
we say that $C^*$ (or $(V_0,V_1)$) is a \emph{directed cut} if each edge points in the same direction, i.e., all of them point from $V_0$ to $V_1$ or all of them point from $V_1$ to $V_0$. Otherwise we say that the cut is \emph{undirected}.
%{\color{red} picit zavar ez az undirected most, talan a not-directed jobb lenne. Majd megnezem mennyire hasznaljuk.}
We remark that in connected graphs, the edges of the cut do uniquely determine the corresponding vertex partition. Working with vertex partitions has the advantage that we can specify cuts in various subgraphs of $\cG$ at once, even though the cuts will typically be different when viewed as sets of edges. 

For a digraph $G$ and a set of vertices $V_1 \subset V$, we denote by $G[V_1]$ the graph whose vertex set is $V_1$, and whose edge set is the set of edges of $G$ where both endpoints are from $V_1$.

For a connected graph %$\cG$ 
or weakly connected digraph, %$G$, 
a \emph{spanning subgraph} is a subgraph that contains all vertices 
%of $\cG$ / $G$ 
and is itself connected/weakly connected.
%{\color{red} Ugye j\'ol tudom, hogy a r\'eszgr\'af \"osszef\"ugg\H os\'eg\'et nem szokt\'ak kik\"otni?}

A \emph{spanning tree} in a directed or undirected graph is a cycle-free spanning subgraph. (Thus, in a directed graph, a subgraph is a spanning tree if and only if it is a spanning tree when we ignore the orientations.)
If $T$ is a spanning tree of a graph $\cG$, then for any edge $e\in T$, the subgraph $T-e$ has exactly two connected components. Let $V_0$ and $V_1$ be the vertex sets of these components and call the cut $(V_0,V_1)$ of $\cG$, denoted by $C^*_\cG(T,e)$, the \emph{fundamental cut} of $e$ with respect to $T$. (We may omit the subscript if the graph is clear from the context.) We define fundamental cuts for spanning trees of directed graphs the same way (i.e., edge directions do not play a role in the definiton).

\subsection{Symmetric edge polytopes and their facets}
Let $\cG$ be a graph with vertex set $V$. For $v\in V$, we denote by $\mathbf{v}$ the vector in $\mathbf{R}^V$ whose coordinate corresponding to $v$ is 1, and all the other coordinates are 0.

\begin{defn} The \emph{symmetric edge polytope} of $\cG$ is
    $$P_\cG =\conv\{\mathbf{u}-\mathbf{v}, \mathbf{v}-\mathbf{u} \mid uv\in E(\cG)\} \subset \mathbb{R}^{V(\mathcal{G})}.$$
\end{defn}

We note that $P_\cG$ lies along the hyperplane of $\mathbb{R}^{V(\mathcal{G})}$ where the sum of components is $0$.
In \cite{arithm_symedgepoly}, the facets of the symmetric edge polytope are characterized as follows. 

\begin{thm}\cite[Theorem 3.1]{arithm_symedgepoly}\label{thm:facets_of_P_G}
	For a graph $\cG$, the facets of $P_\cG$ are enumerated by layerings $l\colon V\to \mathbb{Z}$ such that 
	\begin{itemize}
		\item[(i)] $|l(v)-l(u)|\leq 1$ for each $uv\in E(\cG)$
		\item[(ii)] The subset of edges $E_l=\{
		%\overrightarrow
		{uv}\mid uv\in E(\cG), |l(v)-l(u)|=1\}$ forms a spanning subgraph of $\cG$. (In cases when $\cG$ is disconnected, this means that $(V,E_l)$ has the same number of 
		%(weak) 
		connected components as $\cG$.)
	\end{itemize}
	For such an $l$, the corresponding facet is $\conv\{\mathbf v - \mathbf{u}\mid 
	%\overrightarrow
	{uv}\in E_l\text{ and }l(v)-l(u)=1\}$.
\end{thm}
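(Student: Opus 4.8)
The plan is to identify the facets of $P_\cG$ with the faces $F_a$ obtained by maximising a linear functional $\langle a,\cdot\rangle$ over $P_\cG$, and then to show that such a facet is always cut out by an integral functional, which will be the layering $l$. I begin by recording the basic geometry. Since $P_\cG$ is centrally symmetric and the origin lies in its relative interior, its affine span is the linear span of $\{\mathbf{u}-\mathbf{v}\mid uv\in E(\cG)\}$, which has dimension $|V|-c$, where $c$ is the number of connected components of $\cG$; in particular every $x\in P_\cG$ satisfies $\langle\mathbf{1}_W,x\rangle=0$ for each component $W$ of $\cG$. For $a\in\R^V$, as the vertices of $P_\cG$ are the $\pm(\mathbf{u}-\mathbf{v})$, we have $\max_{x\in P_\cG}\langle a,x\rangle=\max_{uv\in E(\cG)}|a_u-a_v|$, and this maximum is strictly positive whenever $F_a$ is a proper face (otherwise the central symmetry forces $\langle a,\cdot\rangle\equiv 0$ on $P_\cG$). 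Rescaling $a$ we may assume the maximum equals $1$; then $F_a=\conv\{\mathbf{v}-\mathbf{u}\mid uv\in E(\cG),\ a_v-a_u=1\}$. Let $H_a$ denote the spanning subgraph of $\cG$ on the \emph{tight} edges, those $uv$ with $|a_u-a_v|=1$, oriented from the smaller to the larger $a$-value.

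The heart of the matter is the claim that $F_a$ is a facet if and only if $H_a$ has the same number of components as $\cG$, i.e.\ condition (ii). For the ``if'' direction, pick a spanning forest of $H_a$; it has $|V|-c$ edges, the corresponding vectors $\mathbf{v}-\mathbf{u}$ are linearly (hence affinely) independent, and they all lie in $\{\langle a,x\rangle=1\}$, so the affine span of $F_a$ has dimension at least $|V|-c-1=\dim P_\cG-1$ and $F_a$ is a facet. For the ``only if'' direction, note that every vertex $\mathbf{v}-\mathbf{u}$ of $F_a$ has both endpoints in a single component $W$ of $H_a$, so $F_a$ lies in $\{\langle\mathbf{1}_W,x\rangle=0\}$ for each component $W$ of $H_a$; since $H_a$ has at least one edge and $a$ is non-constant on any component of $H_a$ carrying an edge, the functionals $\{\mathbf{1}_W\}_W$ together with $a$ are linearly independent, so they cut the affine span of $F_a$ down to dimension at most $|V|-c(H_a)-1$. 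Hence $c(H_a)>c$ implies $F_a$ is not a facet.

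I then upgrade $a$ to an integral layering. If $F_a$ is a facet, the spanning forest $T$ of $H_a$ meets every component of $\cG$, and along each edge of $T$ the function $a$ changes by exactly $\pm 1$; summing these changes along tree paths shows $a_v-a_u\in\Z$ whenever $u,v$ are in the same component of $\cG$. Subtracting a suitable constant on each component yields an integral $l\colon V\to\Z$ with $l_v-l_u=a_v-a_u$ for all $u,v$, so $\langle l,\cdot\rangle$ and $\langle a,\cdot\rangle$ agree on $P_\cG$; hence $F_l=F_a$, $E_l=H_a$ is spanning, and $l$ satisfies (i) and (ii). Conversely, any integral $l$ satisfying (i) and (ii) has $\max_{uv}|l_u-l_v|=1$, the face $F_l$ equals $\conv\{\mathbf{v}-\mathbf{u}\mid uv\in E_l,\ l_v-l_u=1\}$, and this is a facet by the claim since $E_l=H_l$ is spanning. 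Finally, since the supporting hyperplane of a facet is unique, two layerings give the same facet exactly when they differ by a constant on each component, so after normalising by this the correspondence becomes a bijection.

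I expect the main obstacle to be the bookkeeping in the dimension count of the ``only if'' direction — specifically, checking that the component indicators $\mathbf{1}_W$ of $H_a$ together with $a$ are genuinely linearly independent (which is exactly where the fact that tight edges connect distinct $a$-levels is used) and treating isolated vertices of $H_a$ correctly. Everything else is standard polytope theory plus the observation that a spanning forest of tight edges forces the integrality of $a$.
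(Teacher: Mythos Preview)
The paper does not give its own proof of this theorem; it is quoted from \cite{arithm_symedgepoly} without argument. So there is nothing to compare your proposal against in this paper.

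That said, your argument is correct and is essentially the standard proof. The identification of faces with tight-edge subgraphs $H_a$, the dimension count via a spanning forest for the ``if'' direction, and the linear-independence argument using the component indicators $\mathbf{1}_W$ together with $a$ for the ``only if'' direction all go through as written. Your worry about the independence of $\{\mathbf{1}_W\}_W\cup\{a\}$ is handled by exactly the observation you make: since the maximum is $1>0$, at least one component $W$ of $H_a$ carries a tight edge, so $a$ is non-constant on that $W$; any relation $\sum_W\alpha_W\mathbf{1}_W+\beta a=0$ then forces $\beta=0$ on that component and hence all $\alpha_W=0$. Isolated vertices of $H_a$ cause no trouble here. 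The integrality upgrade via tree-path sums and the uniqueness-up-to-componentwise-constants are routine. One minor phrasing point: when you say the $|V|-c$ vectors from a spanning forest are ``linearly (hence affinely) independent'', it would be cleaner to note that $k$ linearly independent points are automatically affinely independent and thus span a $(k-1)$-dimensional affine set, which is precisely the $\dim P_\cG-1$ you need.
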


%{\color{red} K\"onny\H u megzavarodni, hogy pl.\ egy \'el egy vagy k\'et gener\'atorral j\'arul-e hozz\'a a laphoz.}

For each $l$ as above, the edges in $E_l$ are naturally directed from $u$ to $v$ where $l(v)-l(u)=1$.
Let us call the directed graphs $(V,E_l)$, arising as in the theorem, the \emph{facet graphs} of $\cG$. (They are subgraphs of $\cG$ that are also given an orientation.) For an example, see Figure \ref{fig:facet_graphs}.
%Geometrically speaking, we can say that a layering $l:V\to \mathbb{Z}$ defines a functional on $\mathbb{R}^V$ by $l(\mathbf{p})=\sum_{v\in V}\mathbf{p}_v\cdot l(v)$. Then Theorem \ref{thm:facets_of_P_G} says that $P_\cG$ is cut out by hyperplanes satifying $l=1$ for layerings $l$ satisfying the two conditions of the Theorem.
Notice that (as we suppose that $\cG$ is connected) two layerings determine the same facet graph if any only if they differ by a constant. Let us also point out that the linear extension, to $\mathbb R^V$, of the layering $l$ serves as a conormal vector for the facet which corresponds to $l$.

%As we are interested in symmetric edge polytopes of graphs $\cG$, we will consider all facet graphs of $\cG$. For notational convenience, we will also keep track of edges in $E(\cG)-E(G)$ (that is, edges $uv\in E(\cG)$ where $l(u)=l(v)$) and call them hidden edges.

    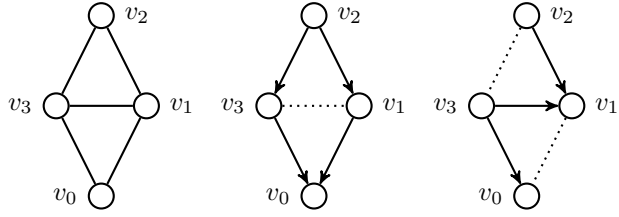
\begin{figure}
    	\begin{center}    
    		\begin{tikzpicture}[-,>=stealth',auto,scale=0.6,
    		thick]
    		\tikzstyle{o}=[circle,draw]
    		\node[o, label=left:$v_{0}$] (1) at (0, 0) {};
    		\node[o, label=left:$v_{3}$] (2) at (-1, 2) {};
    		\node[o, label=right:$v_{1}$] (3) at (1, 2) {};
    		\node[o, label=right:$v_{2}$] (4) at (0, 4) {};
    		\path[-,every node/.style={font=\sffamily\small}]
    		(2) edge node [above] {} (1);
    		\path[-,,every node/.style={font=\sffamily\small}]
    		(3) edge node [below] {} (1)
    		(4) edge node [above] {} (2)
    		(4) edge node [below] {} (3);
    		\path[-,every node/.style={font=\sffamily\small}]
    		(2) edge node [below] {} (3);
    		\end{tikzpicture}
    		\begin{tikzpicture}[-,>=stealth',auto,scale=0.6,
    		thick]
    		\tikzstyle{o}=[circle,draw]
    		\node[o, label=left:$v_{0}$] (1) at (0, 0) {};
    		\node[o, label=left:$v_{3}$] (2) at (-1, 2) {};
    		\node[o, label=right:$v_{1}$] (3) at (1, 2) {};
    		\node[o, label=right:$v_{2}$] (4) at (0, 4) {};
    		\path[->,every node/.style={font=\sffamily\small}]
    		(2) edge node [above] {} (1);
    		\path[->,,every node/.style={font=\sffamily\small}]
    		(3) edge node [below] {} (1)
    		(4) edge node [above] {} (2)
    		(4) edge node [below] {} (3);
    		\path[dotted,every node/.style={font=\sffamily\small}]
    		(2) edge node [below] {} (3);
    		\end{tikzpicture}
    		\begin{tikzpicture}[-,>=stealth',auto,scale=0.6,
    		thick]
    		\tikzstyle{o}=[circle,draw]
    		\node[o, label=left:$v_{0}$] (1) at (0, 0) {};
    		\node[o, label=left:$v_{3}$] (2) at (-1, 2) {};
    		\node[o, label=right:$v_{1}$] (3) at (1, 2) {};
    		\node[o, label=right:$v_{2}$] (4) at (0, 4) {};
    		\path[->,every node/.style={font=\sffamily\small}]
    		(2) edge node [above] {} (1)
    		(2) edge node [below] {} (3)
    		(4) edge node [below] {} (3);
    		\path[dotted,every node/.style={font=\sffamily\small}]
    		(4) edge node [above] {} (2)
    		(3) edge node [below] {} (1);
    		\end{tikzpicture}
    	\end{center}
    	\caption{An undirected graph and two of its subgraphs, with orientations. (They are in fact facet graphs.) Hidden edges are drawn by dotted lines.}
    	\label{fig:facet_graphs}
    \end{figure} 

In our paper \cite{semibalanced} we investigated exactly the polytopes seen in Theorem \ref{thm:facets_of_P_G}. Let us recall some definitions and results. 
%from \cite{semibalanced}.

\begin{defn}[Root polytope of a digraph]
	For a directed graph $G$, its \emph{root polytope} is defined as $\mathcal{Q}_G = \conv\{\mathbf v - \mathbf{u}\mid \overrightarrow{uv}\in E(G) \}$.
\end{defn}

With this definition, the facets of $P_\cG$ are exactly the root polytopes of the facet graphs of $\cG$. Let us say a bit more about these digraphs.

\begin{defn}[Semi-balanced digraph]
	A digraph $G$ is called \emph{semi-balanced} if for each cycle $C$, the numbers of the edges of $C$ pointing in the two cyclic directions are the same.
\end{defn}

We recall a characterization of semi-balanced digraphs from \cite{semibalanced}.

\begin{thm}\cite{semibalanced}\label{thm:semibalanced_graphs_char}
%{\color{red} A vágásos feltételre nem lesz szükség? Lehet hogy a teljes állítást kellene ide másolni.}
 $G$ is semi-balanced if and only if there is a layering $l\colon V \to \mathbb{Z}$ so that regarding the orientation of $G$, we have $l(h)-l(t)=1$ for each edge $\overrightarrow{th}$.
\end{thm}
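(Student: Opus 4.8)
The statement to prove is Theorem \ref{thm:semibalanced_graphs_char}: $G$ is semi-balanced if and only if there is a layering $l\colon V\to\mathbb Z$ with $l(h)-l(t)=1$ for each directed edge $\overrightarrow{th}$.

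The plan is to prove both directions, with the forward (``only if'') direction being the substantive one.

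For the easy direction, suppose such a layering $l$ exists. Take any cycle $C$ in $G$ and traverse it in one of its two cyclic directions. Each edge of $C$ is traversed either forward (from tail to head, contributing $+1$ to the change in $l$) or backward (from head to tail, contributing $-1$). Since we return to the starting vertex, the total change in $l$ around $C$ is $0$, so the number of forward-traversed edges equals the number of backward-traversed edges. This is exactly the semi-balanced condition for $C$.

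For the hard direction, assume $G$ is semi-balanced; we must construct $l$. I would first reduce to the case where $G$ is (weakly) connected, handling components separately. Fix a root vertex $r$ and set $l(r)=0$. For any other vertex $v$, pick a walk from $r$ to $v$ in the underlying undirected graph, and define $l(v)$ to be the signed length: $+1$ for each edge traversed tail-to-head, $-1$ for each edge traversed head-to-tail. The main obstacle is well-definedness: I must show this value is independent of the chosen walk. Two walks from $r$ to $v$ differ by a closed walk, and any closed walk decomposes (in terms of its signed-length contribution) into a sum of cycles — more carefully, the signed length is a homomorphism from the cycle space / first homology, so it suffices that every cycle contributes $0$, which is precisely the semi-balanced hypothesis. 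Once $l$ is well-defined, the property $l(h)-l(t)=1$ for each edge $\overrightarrow{th}$ is immediate by appending that edge to a walk reaching $t$.

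The one technical point worth spelling out is the decomposition of a closed walk's signed length into cycle contributions. A clean way: define, for the underlying undirected graph, a function on oriented edges that is $+1$ on each edge in its given orientation and $-1$ on the reverse; extend linearly to the free abelian group on oriented edges; this descends to a well-defined homomorphism on $H_1$ of the graph (viewed as a $1$-complex) precisely because it vanishes on the boundaries, i.e. on cycles — and vanishing on a cycle is the semi-balanced condition. Alternatively, one can argue by induction on the length of the closed walk, splitting off a simple cycle each time a vertex repeats. I expect this well-definedness argument to be the main (and only real) obstacle; everything else is bookkeeping.

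=== PROOF PROPOSAL ===

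\emph{Plan.} I will prove the two implications separately. The implication ``$l$ exists $\Rightarrow$ $G$ semi-balanced'' is a one-line telescoping computation, while the converse requires building the layering $l$ from scratch and checking it is well defined; the latter is the only real obstacle.

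First I would dispose of the easy direction. Suppose a layering $l$ with $l(h)-l(t)=1$ for every edge $\overrightarrow{th}$ exists, and let $C$ be any cycle of $G$. Traversing $C$ in a fixed cyclic direction, each edge of $C$ is crossed either ``forwards'' (from its tail to its head, so $l$ increases by $1$) or ``backwards'' (from head to tail, so $l$ decreases by $1$). Since the traversal returns to its starting vertex, the signed total change of $l$ is $0$, hence the number of forward edges equals the number of backward edges; these are exactly the two cyclic directions, so $C$ is balanced and $G$ is semi-balanced.

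For the converse, assume $G$ is semi-balanced. I would first reduce to the weakly connected case by treating weak components independently (a layering can be chosen on each and the conditions only involve edges, which lie within a single component). So assume $G$ is weakly connected, fix a base vertex $r$, and set $l(r)=0$. For an arbitrary vertex $v$, choose a walk $W$ from $r$ to $v$ in the underlying undirected graph, and define $l(v)$ to be the \emph{signed length} of $W$: each step that traverses an edge $\overrightarrow{th}$ from $t$ to $h$ contributes $+1$, each step traversing such an edge from $h$ to $t$ contributes $-1$. If this is well defined, then for an edge $\overrightarrow{th}$ we may compute $l(h)$ using a walk to $t$ followed by this edge, giving $l(h)=l(t)+1$, as required; and by construction $l(r)=0$ so $l$ takes integer values.

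The heart of the matter is that $l(v)$ does not depend on the chosen walk $W$. Equivalently, every closed walk based at $r$ has signed length $0$. The cleanest way I would argue this: the assignment of $+1$ to each edge in its given orientation and $-1$ to its reverse defines a $1$-cochain on the underlying graph (viewed as a $1$-dimensional CW-complex), and the signed length of a closed walk is the value of this cochain on the corresponding $1$-cycle in $H_1$. Since $H_1$ of a graph is generated by its (graph-theoretic) cycles, it suffices to check the signed length vanishes on each cycle $C$ — but that is exactly the semi-balanced hypothesis for $C$. Alternatively, and more elementarily, one inducts on the length of a closed walk $W$: if $W$ is not already a simple cycle then some vertex repeats, so $W$ splits as a concatenation of two shorter closed walks (at that vertex), and signed length is additive under concatenation; the base case of a simple cycle is the semi-balanced condition. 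Either way, well-definedness follows, completing the proof. I expect this well-definedness step to be the only nontrivial part; the rest is bookkeeping.
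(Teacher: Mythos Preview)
Your argument is correct. Note, however, that the paper does not actually prove this theorem: it is quoted from the authors' earlier work \cite{semibalanced} and stated here without proof, so there is no ``paper's own proof'' to compare against. Your potential-function construction (fix a root, define $l$ as the signed length of a walk, verify well-definedness via the semi-balanced condition on cycles) is the standard proof of this classical fact and would certainly be acceptable.
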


\begin{cor}
	Any facet of $P_\cG$ 
	%corresponds to 
	is the root polytope of a spanning subgraph of $\cG$ that is oriented in a semi-balanced way. %That is, the facets are exactly $\{Q_G: G \text{ is a semi-balanced orientation of }\cG\}$.
	If $\cG$ is a bipartite graph, then the facets of $P_\cG$ are in bijection with the semi-balanced orientations of $\cG$.
\end{cor}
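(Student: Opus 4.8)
The first assertion is essentially a reformulation of what has already been set up. By Theorem~\ref{thm:facets_of_P_G}, every facet of $P_\cG$ equals $\conv\{\mathbf v-\mathbf u\mid \overrightarrow{uv}\in E_l,\ l(v)-l(u)=1\}$ for some layering $l$ obeying (i) and (ii); this set is exactly the root polytope $\mathcal{Q}_G$ of the facet graph $G=(V,E_l)$, which is a spanning subgraph of $\cG$ by (ii). Since the orientation of $G$ is chosen so that $l(h)-l(t)=1$ for each edge $\overrightarrow{th}\in E(G)$, the layering $l$ is precisely the one demanded in Theorem~\ref{thm:semibalanced_graphs_char}, so $G$ is semi-balanced. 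This proves the first claim.

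For the bipartite statement, assume $\cG$ is connected (the general case follows componentwise). The plan is to show that the assignment
\[
(\text{facet of }P_\cG)\ \longleftrightarrow\ (\text{layering }l\bmod\text{ constant})\ \longmapsto\ \big((V,E_l)\text{ with its orientation}\big)
\]
is a bijection onto the set of semi-balanced orientations of $\cG$. By Theorem~\ref{thm:facets_of_P_G} together with the remark that two layerings give the same facet graph exactly when they differ by a constant, the first correspondence is already a bijection between facets and layerings up to an additive constant, so the task reduces to analysing the second map.

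The crux is the following claim: \textbf{when $\cG$ is bipartite, condition (ii) already forces $E_l=E(\cG)$}, i.e.\ there is no ``flat'' edge $uv$ with $l(u)=l(v)$. Indeed, $(V,E_l)$ is a connected spanning subgraph of $\cG$, hence it is connected and bipartite with the same two partite classes as $\cG$. Every edge of $E_l$ changes the value of $l$ by $\pm1$, so along any path in $(V,E_l)$ the parity of $l$ flips exactly with the parity of the length of the path; since same-class vertices are joined in $(V,E_l)$ by even-length paths and opposite-class vertices by odd-length paths, $l$ has constant parity on each class and opposite parities on the two classes. Therefore every edge $uv\in E(\cG)$ joins vertices with $l(u)\not\equiv l(v)\pmod 2$, so $|l(u)-l(v)|$ is odd and, by (i), equals $1$; thus $uv\in E_l$. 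This is the one step requiring a new (though short) argument; everything else is bookkeeping with the two cited theorems.

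With the crux in hand, the second map sends each facet to an orientation of \emph{all} of $\cG$, and it remains to see that this is a bijection onto the semi-balanced orientations. It lands among semi-balanced orientations, witnessed by the layering $l$ via Theorem~\ref{thm:semibalanced_graphs_char}, exactly as in the first assertion. It is injective: if layerings $l,l'$ induce the same orientation, then $l(v)-l(u)=1=l'(v)-l'(u)$ on every oriented edge $\overrightarrow{uv}$, so $l-l'$ is constant on edges and hence, by connectedness, on $V$ — i.e.\ $l$ and $l'$ determine the same facet. It is surjective: given any semi-balanced orientation $G$ of $\cG$, Theorem~\ref{thm:semibalanced_graphs_char} yields a layering $l$ with $l(h)-l(t)=1$ for every $\overrightarrow{th}\in E(G)$; then (i) holds trivially and $E_l=E(\cG)$ is a connected spanning subgraph, so (ii) holds, and the facet corresponding to $l$ has facet graph $G$. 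I do not anticipate a genuine obstacle anywhere; the only point that needs care is keeping the ``modulo a constant'' identification of facets with layerings straight on both ends of the bijection.
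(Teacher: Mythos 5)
Your proof is correct and essentially mirrors the paper's own argument: the first claim is the same direct combination of Theorems~\ref{thm:facets_of_P_G} and~\ref{thm:semibalanced_graphs_char}, and your ``crux'' is precisely the paper's parity argument (since $E_l$ is a connected spanning bipartite subgraph, $l$ has constant parity on each partite class, forcing $E_l=E(\cG)$). The explicit injectivity/surjectivity bookkeeping you add is a slightly more verbose version of what the paper dispatches via its earlier remark that layerings modulo constants correspond to facet graphs.
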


\begin{proof}
	%By \cite[Theorem 3.1]{arithm_symedgepoly}, the facets of $P_\cG$ are defined by certain layerings $l\colon V \to \mathbb{Z}$. 
	%A layering $l\colon V \to \mathbb{Z}$ defines a facet if and only if $|l(u)-l(v)|\leq 1$ for each $uv\in \cG$ and the edges with $|l(u)-l(v)|=1$ form a spanning subgraph.
	%Let $l$ be a facet-defining layering as described by Theorem \ref{thm:facets_of_P_G}.
	%Then the facet defined by $l$ is $\conv\{\mathbf{1}_u - \mathbf{1}_v  : l(u)-l(v)=1 \}$.
	%By Theorem \ref{thm:facets_of_P_G}, $G_l$ is a spanning subgraph, and by Theorem \ref{thm:semibalanced_graphs_char}, it is semibalanced. 
	The first statement follows directly from Theorems \ref{thm:facets_of_P_G} and \ref{thm:semibalanced_graphs_char}.
	
	Now suppose that $\cG$ is bipartite with vertex classes $U$ and $W$. Each facet has the form $\mathcal{Q}_{G_l}$ where we have $|l(u)-l(v)|=1$ for each $uv\in G_l$ and $G_l$ is a spanning subgraph of $\cG$. This ensures that either $l(u)$ is odd for each $u\in U$ and $l(w)$ is even for each $w\in W$, or vice versa. But this also implies that we cannot have $l(u)=l(v)$ for any $uv\in \cG$, whence $|l(u)-l(v)|=1$ for each edge $uv\in \cG$. That is, each edge of $\cG$ is present and receives an orientation in $G_l$.
	
	Conversely, it is clear from Theorem \ref{thm:facets_of_P_G} that a layering that corresponds to a semi-balanced orientation defines a facet.
\end{proof}

We will also need the following basic observations about the simplices within facets.

\begin{prop}\cite[Proposition 3.1]{semibalanced}
For a semibalanced digraph $G$, within $\mathcal{Q}_G$, some vertices are affinely independent if and only if they correspond to the edge set of a forest. In particular, spanning trees of $G$ give rise to maximal simplices in $\mathcal Q_G$.
\end{prop}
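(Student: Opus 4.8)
The plan is to reduce the statement to a standard fact about the lattice $\Z^V$ restricted to the hyperplane where coordinates sum to zero, together with the combinatorics of the incidence structure of $G$. Concretely, recall that $\mathcal Q_G=\conv\{\mathbf v-\mathbf u\mid\overrightarrow{uv}\in E(G)\}$ and that the vertices of $\mathcal Q_G$ are exactly the points $\mathbf v-\mathbf u$ for $\overrightarrow{uv}\in E(G)$ (this uses semi-balancedness: if some $\mathbf v-\mathbf u$ were a convex combination of the others, the corresponding two-element edge set would be affinely dependent, which we are about to rule out). So a collection of vertices of $\mathcal Q_G$ is the same thing as a subset $F\subseteq E(G)$ of edges, and we must show: the vectors $\{\mathbf v-\mathbf u\mid\overrightarrow{uv}\in F\}$ are affinely independent if and only if $F$ is a forest.

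First I would translate affine independence into linear independence. The vectors $\{\mathbf v_i-\mathbf u_i\}_{i=1}^k$ are affinely independent exactly when the differences $(\mathbf v_i-\mathbf u_i)-(\mathbf v_1-\mathbf u_1)$, $i=2,\dots,k$, are linearly independent; equivalently — and this is cleaner — one homogenizes: affine independence of a set of points $p_1,\dots,p_k$ in $\R^V$ is equivalent to linear independence of $(p_1,1),\dots,(p_k,1)$ in $\R^V\times\R$. Since each $\mathbf v-\mathbf u$ is (up to sign) a row of the incidence matrix of $G$, I would recall the classical result that over any field the incidence matrix of a graph has the property that a set of columns (edges) is linearly independent iff the corresponding edge set contains no cycle — more precisely, for the signed incidence matrix of a digraph, a set of edges is linearly dependent iff it contains a cycle (with the usual $\pm1$ cycle-space vector as the dependence). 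The subtlety here is the extra ``$+1$'' coordinate from homogenization, so I would instead argue directly on the vectors $\mathbf v-\mathbf u$ themselves, as follows.

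The key step is the equivalence for a subset $F\subseteq E(G)$: a relation $\sum_{\overrightarrow{uv}\in F}\lambda_{uv}(\mathbf v-\mathbf u)=0$ with $\sum\lambda_{uv}=0$ exists nontrivially iff $F$ contains a cycle. For the ``if'' direction: if $F$ contains a cycle $C$, orient it cyclically and let $\lambda_e=\pm1$ according as $e$ agrees or disagrees with the chosen cyclic orientation, and $\lambda_e=0$ off $C$; then $\sum\lambda_e(\mathbf{head}(e)-\mathbf{tail}(e))=0$ as a telescoping sum around the cycle, and $\sum\lambda_e=0$ precisely because semi-balancedness guarantees equally many edges of $C$ point each way. (This is exactly where the hypothesis that $G$ is semi-balanced enters, and it is the one genuinely non-automatic point.) For the ``only if'' direction: suppose $F$ is a forest and $\sum_{e\in F}\lambda_e(\mathbf{head}(e)-\mathbf{tail}(e))=0$. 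Take a leaf $w$ of the forest $(V(F),F)$; the coordinate $\mathbf w$ appears in exactly one term, so its coefficient forces $\lambda_e=0$ for the unique edge $e$ at $w$; delete $w$ and that edge and induct. Hence no nontrivial relation (affine or linear) exists, so $F$ gives affinely independent vertices. Combining the two directions with the homogenization remark yields the claim; the ``in particular'' follows since a spanning tree of $G$ has $|V(G)|-1$ edges, giving $|V(G)|-1$ affinely independent points, i.e. a maximal (top-dimensional) simplex in the $(|V(G)|-1)$-dimensional polytope $\mathcal Q_G$.

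I expect the main obstacle to be bookkeeping rather than conceptual: one must be careful that the affine-dependence count and the linear-dependence count match up (the homogenized version has one more coordinate, which is exactly compensated by restricting to coefficient-sum-zero relations), and one must invoke semi-balancedness at precisely the right moment to make the cyclic $\pm1$ vector have coefficient sum zero. Everything else is the standard leaf-stripping argument for incidence matrices of forests.
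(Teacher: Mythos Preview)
The paper does not prove this proposition; it is quoted from \cite{semibalanced} without argument, so there is no in-paper proof to compare against. Your argument is correct and is the standard one: the signed incidence vectors of a forest are linearly (hence affinely) independent by leaf-stripping, while for a cycle the $\pm1$ circuit vector gives a linear relation whose coefficient sum is zero precisely because of the semi-balanced condition. This is exactly the approach one would expect in \cite{semibalanced} as well.

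One small slip worth fixing: $\mathcal{Q}_G$ has dimension $|V(G)|-2$, not $|V(G)|-1$. Besides the sum-zero hyperplane, every generator $\mathbf v-\mathbf u$ also satisfies $\langle l,\cdot\rangle=1$ for the layering $l$ of Theorem~\ref{thm:semibalanced_graphs_char}, so $\mathcal{Q}_G$ sits in a codimension-$2$ affine subspace of $\R^{V(G)}$. This does not affect your conclusion: a spanning tree yields $|V(G)|-1$ affinely independent points, i.e.\ a $(|V(G)|-2)$-dimensional simplex, which is indeed maximal. Your parenthetical about every $\mathbf v-\mathbf u$ being an actual vertex of $\mathcal{Q}_G$ is a bit loose (a non-vertex would be a convex combination of possibly many generators, not just two), but it is also unnecessary for the argument and could simply be dropped.
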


\begin{prop} \cite[Lemma 3.5]{semibalanced} \label{prop:unimodular_simplices}
For any facet graph $G$ and spanning tree $T\subset G$, the simplex $\tilde{\mathcal{Q}}_T=\conv\{0, \mathbf{v}-\mathbf{u} \mid \overrightarrow{uv}\in E(G)\}$ is unimodular.
\end{prop}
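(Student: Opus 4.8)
The plan is to establish unimodularity of $\tilde{\mathcal{Q}}_T$ by exhibiting the vectors $\{\mathbf v - \mathbf u \mid \overrightarrow{uv}\in E(T)\}$ as a lattice basis of the lattice in which the simplex naturally lives. Since $G$ is a facet graph, by Theorem \ref{thm:semibalanced_graphs_char} there is a layering $l\colon V\to\mathbf Z$ with $l(v)-l(u)=1$ for every $\overrightarrow{uv}\in E(G)$; in particular every vertex $\mathbf v - \mathbf u$ of $\mathcal Q_G$ sits on the affine hyperplane $H=\{x : \sum_{w}x_w = 0,\ \sum_w l(w)x_w = 1\}$. After translating by one fixed $\mathbf v_0 - \mathbf u_0$ (a vertex of $T$), the simplex becomes the origin together with the difference vectors, and the relevant lattice is $\Lambda = \mathbf Z^V \cap H_0$, where $H_0$ is the linear subspace parallel to $H$ (the intersection of $\{\sum_w x_w = 0\}$ with $\{\sum_w l(w)x_w = 0\}$). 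So the task reduces to showing that the $|V|-2$ vectors coming from the edges of the spanning tree $T$ (modulo one chosen as basepoint) form a $\mathbf Z$-basis of $\Lambda$.

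First I would recall the classical fact that for an undirected connected graph, the incidence vectors $\{\mathbf v - \mathbf u \mid uv\in T\}$ of the edges of a spanning tree $T$ form a basis of the lattice $\mathbf Z^V \cap \{\sum_w x_w = 0\}$: indeed, orienting $T$ arbitrarily gives the reduced incidence matrix of a tree, which is totally unimodular and of full rank $|V|-1$, so its columns are a lattice basis of that rank-$(|V|-1)$ lattice. Call this lattice $\Lambda_0 \supset \Lambda$. Now $\Lambda$ is the sublattice of $\Lambda_0$ cut out by the single extra linear condition $\sum_w l(w) x_w = 0$. Evaluating this functional on the tree-edge basis vectors $\mathbf v - \mathbf u$ (with $\overrightarrow{uv}\in E(T)$, so oriented consistently with $G$) gives $l(v)-l(u)=1$ for each. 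Hence in the chosen basis of $\Lambda_0$, the functional $\sum_w l(w)x_w$ has coordinate vector $(1,1,\dots,1)$.

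The heart of the argument is then the following linear-algebra observation: if $L$ is a functional on a free $\mathbf Z$-module with basis $b_1,\dots,b_n$ such that $L(b_i)=1$ for all $i$, then $\ker L$ has the basis $b_1-b_2,\ b_1-b_3,\ \dots,\ b_1-b_n$ (or more symmetrically, $b_i - b_n$ for $i<n$). This is immediate: these $n-1$ vectors lie in $\ker L$, are clearly independent, and any $\sum a_i b_i \in \ker L$ satisfies $\sum a_i = 0$, so it equals $\sum_{i<n} a_i (b_i - b_n)$. Applying this with $b_i$ the tree-edge vectors of $\Lambda_0$ shows that $\Lambda = \ker(L|_{\Lambda_0})$ is spanned over $\mathbf Z$ by differences of tree-edge vectors. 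Finally I would identify these differences with the translated vertex vectors of $\tilde{\mathcal Q}_T$: picking the basepoint vertex of the simplex to be $\mathbf v_0 - \mathbf u_0$ for a suitable edge $\overrightarrow{u_0v_0}\in E(T)$, the remaining translated vertices $(\mathbf v - \mathbf u) - (\mathbf v_0 - \mathbf u_0)$ for $\overrightarrow{uv}\in E(T)$ are exactly such differences, hence form a basis of $\Lambda$; this is the definition of $\tilde{\mathcal Q}_T$ being unimodular.

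The step I expect to require the most care is bookkeeping the two lattices and the ambient affine spaces correctly — making sure that $\tilde{\mathcal Q}_T$ is genuinely full-dimensional inside the affine span of $\mathcal Q_G$ (so that ``unimodular'' is even the right notion), that the basepoint translation does not change the lattice, and that the orientations of the tree edges are taken consistently with the layering $l$ so that $L(b_i)=+1$ rather than $\pm1$. Once those identifications are pinned down, the combinatorial input is just the total unimodularity of a tree's incidence matrix plus the elementary kernel computation above, so no genuinely hard estimate or construction remains.
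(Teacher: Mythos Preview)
The paper does not give its own proof of this proposition; it simply cites \cite[Lemma 3.5]{semibalanced}. So there is nothing in the paper to compare against line by line, but your proposal still deserves comment because it contains a misidentification.

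You have confused the simplex $\tilde{\mathcal Q}_T$ with the facet simplex $\mathcal Q_T$. The simplex in the statement is $\tilde{\mathcal Q}_T=\conv\{0,\ \mathbf v-\mathbf u : \overrightarrow{uv}\in E(T)\}$, which already contains the origin as a vertex. It has $|V|$ vertices and is $(|V|-1)$-dimensional, spanning the full hyperplane $\{\sum_w x_w=0\}$ in which $P_\cG$ lives --- not the codimension-two affine subspace $H$ of the facet. Consequently the relevant lattice is your $\Lambda_0=\mathbf Z^V\cap\{\sum_w x_w=0\}$, not the smaller $\Lambda=\Lambda_0\cap\{\sum_w l(w)x_w=0\}$. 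When you translate by $\mathbf v_0-\mathbf u_0$ and list ``the remaining translated vertices $(\mathbf v-\mathbf u)-(\mathbf v_0-\mathbf u_0)$ for $\overrightarrow{uv}\in E(T)$'', you have silently dropped the vertex $0$ of $\tilde{\mathcal Q}_T$ (whose translate $-(\mathbf v_0-\mathbf u_0)$ does not lie in $\Lambda$), and what you end up proving is the unimodularity of $\mathcal Q_T$ inside the facet lattice $\Lambda$, which is a different (and harder) statement.

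The good news is that the correct proof is already contained in your write-up: the ``classical fact'' you quote --- that the $|V|-1$ vectors $\mathbf v-\mathbf u$ for $\overrightarrow{uv}\in E(T)$ form a $\mathbf Z$-basis of $\Lambda_0$, by total unimodularity of the tree's incidence matrix --- is precisely the assertion that $\tilde{\mathcal Q}_T$ is unimodular, since $0$ is one of its vertices and no translation is needed. Everything after that point (the functional $L$, the kernel computation, the differences $b_i-b_n$) is superfluous for the stated proposition; it would be the right argument if you were asked to show $\mathcal Q_T$ is unimodular in the facet.
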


\subsection{Dissecting a facet of the symmetric edge polytope: Jaeger trees}

In \cite{semibalanced} we gave a method for dissecting the root polytope of a semi-balanced digraph into simplices, in a shellable way. This was done using the so-called Jaeger trees of the digraph.
Let us recall some details.
%Here we recall the definition of Jaeger trees and how they dissect $Q_G$ for a semi-balanced digraph $G$.

Let $G$ be a semi-balanced digraph.
%Once again, $G$ will be a facet graph of an undirected graph $\cG$, hence we suppose that there might be hidden edges for $G$. Hidden edges will not play any role for Jaeger trees, we only keep track of them here because this way it is easier to treat all facet graphs of an undirected graph $\cG$ in a unified manner.
To define Jaeger trees of $G$, we need to fix a ribbon structure for $G$. 
%For a digraph $G$, a 
Here the notion of a \emph{ribbon structure} is independent of the graph's orientation: it is a family of cyclic permutations, namely for each vertex $x$ of $G$, a cyclic permutation of the edges incident to $x$ (the collection of in- and out-edges) is given.
For an edge $xy$ of $G$, we use the following notations: 
\begin{itemize}
	\item $yx^+_G$: the edge following $yx$ at $y$
	%\item $yx^-_G$: the edge preceding $yx$ at $y$
	\item $xy^+_G$: the edge following $xy$ at $x$.
	%\item $xy^-_G$: the edge preceding $xy$ at $x$.
\end{itemize}
If $G$ is clear from the context, we omit the subscript.

%We can define ribbon structure for an undirected graph similarly. 
In our applications, $G$ will often be a facet graph of an undirected graph $\cG$. In this case, by fixing a ribbon structure for $\cG$, all facet graphs $G$ inherit a ribbon structure from $\cG$ in the natural way.

In addition to the ribbon structure, we also need to fix a \emph{basis} $(b_0,b_0b_1)$, where the \emph{base node} $b_0$ is an arbitrary vertex of $G$ and the \emph{base edge} $b_0b_1$ is an arbitrary edge incident to $b_0$ in $G$. 
%(Note that we do not require anything about the color of $b_0$ or 
Note that no assumption is made about the orientation of $b_0b_1$.

Suppose that a ribbon structure and a basis are fixed. Then any spanning tree $T$ of $G$ gives us a natural ``walk'' in the graph $G$. This was defined by Bernardi \cite{Bernardi_first}, and following him we call it the \emph{tour of $T$}. The following definition is valid for both directed and undirected graphs, as orientations play no role in it.

\begin{defn}[Tour of a tree] \label{def:tour_of_a_tree}
	Let $\cG$ be a ribbon graph with a basis $(b_0,b_0b_1)$, and let $T$ be a spanning tree of $\cG$.
	The tour of $T$ is a sequence of node-edge pairs, starting with $(b_0, b_0b_1)$. If the current node-edge pair is $(x,xy)$ and $xy\notin T$, then the current node-edge pair of the next step is $(x,xy^+_{\cG})$. If the current node-edge pair is $(x,xy)$ and $xy\in T$, then the current node-edge pair of the next step is $(y,yx^+_\cG)$. In the first case we say that the tour \emph{skips} $xy$ and in the second case we say that the tour \emph{traverses} $xy$. The tour stops right before when $(b_0,b_0b_1)$ would once again become the current node-edge pair. 
\end{defn}

Bernardi proved {\cite[Lemma 5]{Bernardi_first}} that in the tour of a spanning tree $T$, each edge $xy$ of $G$ becomes current edge twice, in one case with $x$ as current node, and in the other case with $y$ as current node.

\begin{ex}\label{ex:tour}
	Figure \ref{fig:tour_and_Jaeger_ex} shows two spanning trees in a semi-balanced digraph. Let the ribbon structure be induced by the positive orientation of the plane and let the basis be $(v_0,v_0v_3)$. 
	%(which is indicated by the little gray arrow).
	
	The tour of the tree in the left panel is $(v_0, v_0v_3)$, $(v_3,v_3v_1)$, $(v_1,v_1v_4)$, $(v_1, v_1v_6)$, $(v_6,v_6v_2), (v_6,v_6v_0), (v_6,v_6v_1), (v_1, v_1v_3), (v_3, v_3v_0)$, $(v_0, v_0v_6)$, $(v_0, v_0v_5)$, $(v_5, v_5v_2)$, $(v_2, v_2v_6)$, $(v_2, v_2v_4)$, $(v_4, v_4v_1)$, $(v_4, v_4v_2)$, $(v_2, v_2v_5)$, $(v_5, v_5v_0)$.
	
	The tour of the tree on the right is $(v_0, v_0v_3)$, $(v_0,v_0v_6)$, $(v_0,v_0v_5)$, $(v_5, v_5v_2)$, $(v_2,v_2v_6), (v_6,v_6v_0), (v_6,v_6v_1), (v_1, v_1v_3), (v_3, v_3v_0)$, $(v_3, v_3v_1)$, $(v_1, v_1v_4)$, $(v_1, v_1v_6)$, $(v_6, v_6v_2)$, $(v_2, v_2v_4)$, $(v_4, v_4v_1)$, $(v_4, v_4v_2)$, $(v_2, v_2v_5)$, $(v_5, v_5v_0)$.
\end{ex}

The following is the key notion in this paper. Note that this is the point where orientations start to matter.

\begin{defn}[Jaeger tree]
\label{def:jaegertree}
    Let $G$ be a %semi-balanced 
    digraph.
	Given a fixed ribbon structure and basis for $G$, we call a spanning tree $T$ of $G$ \emph{Jaeger tree} if for each edge $\overrightarrow{th}\in G-T$, in the tour of $T$, the pair $(t,th)$ becomes current node-edge pair before $(h,th)$. In other words, in the tour of $T$, each non-edge is first seen at its tail.
\end{defn}

\begin{figure}[t]
	\begin{center}
		\begin{tikzpicture}[-,>=stealth',auto,scale=0.4]
		\tikzstyle{e}=[{circle,draw,scale=0.6}]
		\tikzstyle{v}=[{circle,draw,scale=0.6}]
		\tikzstyle{u}=[{circle}]
		\node[e] (3) at (0, 0.9) {};
		\node[v] (1) at (3, 2.3) {};
		\node[v] (0) at (-3, 2.3) {};
		\node[e] (4) at (3, 4.7) {};
		\node[e] (5) at (-3, 4.7) {};
		\node[v] (2) at (0, 6.2) {};
		\node[e] (6) at (0, 3.6) {};
		
		\node[u] at (-0.9, 0.6) {$v_3$};
		\node[u] at (3.8, 2.3) {$v_1$};
		\node[u] at (-3.8, 2.3) {$v_0$};
		\node[u] at (3.8, 4.7) {$v_4$};
		\node[u] at (-3.8, 4.7) {$v_5$};
		\node[u] at (0.9, 6.4) {$v_2$};
		\node[u] at (-0.9, 4) {$v_6$};
		
		\node[{circle,fill,color=gray,scale=0.4}] at (-3.4, 1.8) {};
	    \draw [->,color=gray] (-3.4, 1.8) -- (-2.4, 1.35);
	    
		\path[->,line width=0.6mm]
		(0) edge node {} (3)
		(0) edge node {} (5)
		(1) edge node {} (3)
		(2) edge node {} (4)
		(1) edge node {} (6)
		(2) edge node {} (5);
		\path[->,dashed,line width=0.2mm]
		(0) edge node {} (6)
		(1) edge node {} (4)
		(2) edge node {} (6);
		\end{tikzpicture}
		\hspace{0.6cm}
		\begin{tikzpicture}[-,>=stealth',auto,scale=0.4]
		\tikzstyle{e}=[{circle,draw,scale=0.6}]
		\tikzstyle{v}=[{circle,draw,scale=0.6}]
		\tikzstyle{u}=[{circle}]
		\node[e] (3) at (0, 0.9) {};
		\node[v] (1) at (3, 2.3) {};
		\node[v] (0) at (-3, 2.3) {};
		\node[e] (4) at (3, 4.7) {};
		\node[e] (5) at (-3, 4.7) {};
		\node[v] (2) at (0, 6.2) {};
		\node[e] (6) at (0, 3.6) {};
		
		\node[u] at (-0.9, 0.6) {$v_3$};
		\node[u] at (3.8, 2.3) {$v_1$};
		\node[u] at (-3.8, 2.3) {$v_0$};
		\node[u] at (3.8, 4.7) {$v_4$};
		\node[u] at (-3.8, 4.7) {$v_5$};
		\node[u] at (0.9, 6.4) {$v_2$};
		\node[u] at (-0.9, 4) {$v_6$};
	
		\node[{circle,fill,color=gray,scale=0.4}] at (-3.4, 1.8) {};
	    \draw [->,color=gray] (-3.4, 1.8) -- (-2.4, 1.35);
		\path[->,line width=0.6mm]
		(0) edge node {} (5)
		(1) edge node {} (3)
		(1) edge node {} (6)
		(2) edge node {} (4)
		(2) edge node {} (5)
		(2) edge node {} (6);
		\path[->,dashed,line width=0.2mm]
		(0) edge node {} (6)
		(0) edge node {} (3)
		(1) edge node {} (4);
		\end{tikzpicture}
	\end{center}
	\caption{Two spanning trees in a semi-balanced digraph.
	The ribbon structure is induced by the positive orientation of the plane and the basis is indicated by the small gray arrow. The tree on the left is not a Jaeger tree, while the one on the right is.
	See Examples \ref{ex:tour} and \ref{ex:Jaeger} for more details.
	}
	\label{fig:tour_and_Jaeger_ex}
\end{figure}
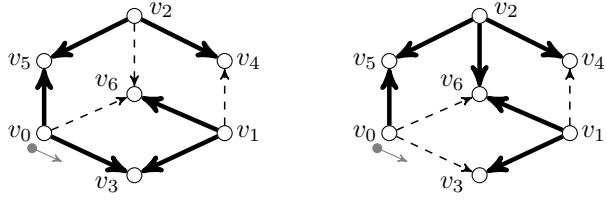

\begin{ex}\label{ex:Jaeger}
    The tree of the left panel of Figure \ref{fig:tour_and_Jaeger_ex} is not a Jaeger tree, because in its tour $(v_6,v_6v_2)$ precedes $(v_2,v_2v_6)$ whereas the edge $v_2v_6$ is oriented from $v_2$ to $v_6$ (and similarly for $v_0v_6$). On the other hand, the tree in the right panel is a Jaeger tree.
\end{ex}

%\begin{remark}
%    In the prequel paper \cite{semibalanced}, we defined Jaeger trees for semi-balanced digraphs (that is, without hidden edges) equipped with a ribbon structure and basis. Notice that if we delete all the hidden edges, the Jaeger trees of \cite{semibalanced} agree with the Jaeger trees according to the above definition. Indeed, as we have no conditions for hidden edges, the set of Jaeger trees remains the same if we simply remove the hidden edges (if $b_0b_1$ is hidden, then we need to take the first non-hidden edge at $b_0$ as base edge). We only keep track of hidden edges for notational convenience.
%\end{remark}

Note that for a Jaeger tree, we require that the tail of a non-tree edge be seen before its head in the tour of $T$, but we do not care about the orientations of tree edges. However, when analyzing relationships of Jaeger trees, it will turn out that the latter can also be important. Hence we introduce the following terminology:

\begin{defn}
    Let $T$ be a spanning tree of a ribbon digraph $G$, and let $b_0\in V(G)$. We say that $\overrightarrow{th}\in T$ is a \emph{tail-edge} of $T$ if $b_0$ and the tail $t$ of $\overrightarrow{th}$ are in the same component of $T-\overrightarrow{th}$. We call $\overrightarrow{th}$ a \emph{head-edge} otherwise.
\end{defn}

Notice that $\overrightarrow{th}$ being a tail-edge is equivalent to the fact that (for an arbitrary ribbon structure and basis where $b_0$ is the base node) $(t,th)$ comes before $(h,th)$ in the tour of $T$. However, we can define tail-edges without refering to a ribbon structure, and fixing only the base node.

\begin{ex}
	The right panel of Figure \ref{fig:tour_and_Jaeger_ex} shows a Jaeger tree (for the given ribbon structure and basis).
	The tail edges of the tree are $v_0v_5$, $v_2v_4$, $v_2v_6$ and $v_1v_3$ while the head-edges are $v_5v_2$ and $v_6v_1$.
\end{ex}

The following property makes Jaeger trees very useful for us. 

\begin{thm}\label{thm:semibalanced_dissection}\cite{semibalanced}
	For a semi-balanced digraph $G$, the simplices corresponding to Jaeger trees dissect the root polytope $\mathcal{Q}_G$.
\end{thm}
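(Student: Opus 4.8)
The plan is to establish a bijection between the Jaeger trees of $G$ and the maximal simplices in a specific dissection of $\mathcal{Q}_G$, and then to verify the two defining properties of a dissection: that the simplices $\tilde{\mathcal{Q}}_T$ (for $T$ a Jaeger tree) are mutually interior-disjoint, and that their union is all of $\mathcal{Q}_G$. By Proposition~\ref{prop:unimodular_simplices} each $\tilde{\mathcal{Q}}_T$ is unimodular, hence they all have the same (normalized) volume, so the two conditions together are equivalent to the single equality
\[
\sum_{T \text{ Jaeger tree of } G} \mathrm{vol}\,\tilde{\mathcal{Q}}_T = \mathrm{vol}\,\mathcal{Q}_G,
\]
provided we also know that distinct Jaeger trees give genuinely distinct full-dimensional simplices and that the union is contained in $\mathcal{Q}_G$ (the latter being clear since every vertex $\mathbf{v}-\mathbf{u}$ of $\tilde{\mathcal{Q}}_T$ is a vertex of $\mathcal{Q}_G$). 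So the heart of the matter is a counting/volume identity: the number of Jaeger trees equals the normalized volume of $\mathcal{Q}_G$.

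The natural route to that identity is the theory of the \emph{interior polynomial} and the Bernardi-style tour: for a semi-balanced digraph, the normalized volume of $\mathcal{Q}_G$ should coincide with the number of spanning trees counted appropriately, and Bernardi's tour gives, for each spanning tree $T$, a canonical way to record how its non-tree edges are first encountered (at tail or at head). The key structural claim is that the assignment $T \mapsto$ (the set of non-tree edges first seen at their head, say) realizes a partition-type or ``activity''-style decomposition of the set of all spanning trees, in which the Jaeger trees are exactly those with no ``bad'' non-tree edge; combined with a dimension/independence argument this forces the volume count to land exactly on the Jaeger trees. Concretely, I would first recall (or re-derive) from \cite{semibalanced} that $\mathrm{vol}\,\mathcal{Q}_G$ equals the number of spanning trees of $G$ satisfying the Jaeger condition, then separately argue interior-disjointness of the $\tilde{\mathcal{Q}}_T$ directly: if two Jaeger trees $T \neq T'$ had overlapping interiors, pick a point in the common interior, write it in barycentric coordinates with respect to both simplices, and use the exchange property of spanning trees together with the ``first seen at tail'' condition and Bernardi's Lemma~5 to derive a contradiction — essentially, an edge in $T \setminus T'$ would have to be first seen at its head in one tour, violating the Jaeger property.

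In the write-up I would structure it as: (1) containment $\bigcup_T \tilde{\mathcal{Q}}_T \subseteq \mathcal{Q}_G$ — immediate; (2) full-dimensionality and unimodularity of each $\tilde{\mathcal{Q}}_T$ — cite Proposition~\ref{prop:unimodular_simplices}; (3) pairwise interior-disjointness — the combinatorial exchange argument sketched above; (4) volume bookkeeping — the number of Jaeger trees equals $\mathrm{vol}\,\mathcal{Q}_G$, invoking the relevant computation from \cite{semibalanced} (e.g.\ via the hypergraph/interior-polynomial description or via a deletion–contraction recursion on the base edge that mirrors the recursion satisfied by the tree count). Steps (1) and (2) are routine. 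The main obstacle is step (3), the interior-disjointness: making the exchange argument airtight requires careful tracking of how Bernardi's tour changes when one swaps a tree edge for a non-tree edge across a fundamental cut, and showing that the Jaeger condition is incompatible with two such trees sharing an interior point. If a direct geometric argument for (3) proves awkward, a viable fallback is to first build the shelling order (which is really the content of the companion results on shellability) and deduce disjointness as a byproduct of the shelling, but I would prefer the self-contained combinatorial argument. I would expect the volume identity in (4) to be the other place where one leans most heavily on \cite{semibalanced}, so if that reference already contains the normalized-volume formula for root polytopes of semi-balanced digraphs, the proof here becomes quite short; otherwise one must insert that computation, most cleanly by induction on $|E(G)|$ using the fact that contracting or deleting the base edge sends Jaeger trees of $G$ bijectively to Jaeger trees of the smaller digraph and correspondingly decomposes $\mathcal{Q}_G$.
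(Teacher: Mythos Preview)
This theorem is not proved in the present paper at all: it is quoted verbatim from \cite{semibalanced} and used as a black box. So there is no ``paper's own proof'' here to compare against; any comparison would have to be to the argument in \cite{semibalanced}.

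That said, your proposed strategy has a structural problem. Your plan reduces the dissection claim to two ingredients: (3) pairwise interior-disjointness, and (4) the numerical identity $|\{\text{Jaeger trees}\}|=\mathrm{vol}\,\mathcal{Q}_G$. For (4) you propose to ``invoke the relevant computation from \cite{semibalanced}'' --- but in that reference the normalized volume of $\mathcal{Q}_G$ is obtained \emph{from} the dissection by Jaeger trees, not independently of it. So citing that volume formula to prove the dissection is circular. Your fallback, a deletion--contraction induction on the base edge, is in fact much closer to how these results are actually established (in \cite{hyperBernardi,semibalanced} the argument is essentially recursive: one analyzes what happens at the first step of the tour and reduces to smaller ribbon digraphs), but you have not carried it out, and it is the real content of the theorem.

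Two smaller points. First, you consistently write $\tilde{\mathcal{Q}}_T$, the cone with apex $0$; the dissection of $\mathcal{Q}_G$ uses the facet-level simplices $\mathcal{Q}_T=\conv\{\mathbf v-\mathbf u:\overrightarrow{uv}\in T\}$ without the origin. Second, your sketch of (3) (``an edge in $T\setminus T'$ would have to be first seen at its head'') is not an argument: two distinct spanning trees can very well yield simplices with overlapping interiors, and ruling this out for Jaeger trees specifically requires a careful analysis of the first place the two tours diverge, together with the linear-algebraic meaning of a shared interior point (coefficients supported on $T\cap T'$). That analysis is exactly what the ordering $\prec$ and the shelling in Theorem~\ref{thm:shelling_for_semibalanced} encode, which is why the cleanest route --- and the one taken in \cite{semibalanced} --- is to prove disjointness and covering together via the recursive/tour-based description, rather than to separate them and appeal to a volume count.
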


Hence, if we take the Jaeger trees for each facet graph of $\cG$ (for some ribbon structure and basis, which might 
even vary from facet to
%be different for different 
facet) and add the origin to each 
%Jaeger tree 
corresponding simplex as a new vertex, then we get a dissection of the symmetric edge polytope $P_\cG$. Since by Proposition \ref{prop:unimodular_simplices} these simplices are all unimodular, we immediately obtain that the normalized volume of $P_\cG$ is equal to the sum of the numbers of Jaeger trees over all facet graphs. (Note that, also by Thorem \ref{thm:semibalanced_dissection}, the number of Jaeger trees of a semibalanced digraph is independent of the chosen ribbon structure and basis.)

Jaeger trees of a semi-balanced digraph have a property that further enhances their usefulness: 
%is even more useful than simply giving a dissection
The dissection by Jaeger trees is shellable, with the following shelling order.  %with a shelling order that is a restriction to Jaeger trees of the following natural ordering.

%{\color{red} Van neve a rendez\'esnek?}

%\begin{defn}\label{def:ordering_of_Jaeger_trees_in_a_face} 
%For two spanning trees $T$ and $T'$ of a semi-balanced digraph $G$, we say that $T' \prec T$ if the first difference in their tours is one of the followings:
%	\begin{itemize}
%		\item the current node-edge pair is $(t,\overrightarrow{th})$ with $\overrightarrow{th}\notin T'$ and $\overrightarrow{th} \in T$ (an edge seen at its tail is not included into $T'$ but is included into $T$),
		%an edge is removed from $T'$ in accordance with the rule (from the tail direction), but is not removed from $T$,
%		\item the current node-edge pair is $(h,\overleftarrow{ht})$ with $\overleftarrow{ht}\in T'$ and $\overleftarrow{ht} \notin T$ (an edge seen at its head is not included into $T$ thereby violating the Jaeger rule, and it is included into $T'$).
		%an edge is removed from $T$ violating the rule (from the head direction), but it is not removed from $T'$.
%	\end{itemize}
%\end{defn}

\begin{defn}\label{def:ordering_of_Jaeger_trees_in_a_face} 
Let $T$ and $T'$ be Jaeger trees of a semi-balanced digraph $G$ (for some fixed ribbon structure and basis). We say that $T' \prec T$ if their respective tours first differ so that the current node-edge pair $(t,\overrightarrow{th})$ satisfies $\overrightarrow{th}\notin T'$ and $\overrightarrow{th} \in T$. (I.e., an edge seen at its tail is not included in $T'$ but is included in $T$.)
\end{defn}

Note that as we supposed that $T$ and $T'$ were both Jaeger trees, the first difference of their tours necessarily looks as above and hence this is a complete ordering. In fact, the order has a natural extension to all spanning  trees \cite{semibalanced}. 

\begin{thm}\cite{semibalanced} \label{thm:shelling_for_semibalanced}
	The order $\prec$ is a shelling order of the dissection given by Jaeger trees. For each Jaeger tree $T$, the number of facets of the corresponding simplex $Q_{T}$ that lie in the union of previous simplices, 
	%of the shelling, 
	equals the number of tail-edges $e\in T$ such that the fundamental cut $C^*(T,e)$ is not a directed cut.
\end{thm}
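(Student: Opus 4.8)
The plan is to establish two things for each Jaeger tree $T$ with tour-order predecessors: first, that the facets of the simplex $Q_T$ lying in earlier simplices are exactly those opposite to a tree edge of a certain type; and second, to identify that type as the tail-edges whose fundamental cut is undirected. Recall that $Q_T = \conv\{\mathbf v - \mathbf u \mid \overrightarrow{uv} \in T\}$, so its vertices are indexed by the edges of $T$, and its facets are indexed by the edges of $T$ as well (the facet $F_e$ opposite the vertex corresponding to $e \in T$ is spanned by the vertices for $T - e$).

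First I would analyze when a facet $F_e$ of $Q_T$ lies in the union of the earlier simplices $\bigcup_{T' \prec T} Q_{T'}$. The idea is that $F_e$ must lie in a facet of some other Jaeger tree simplex $Q_{T'}$, and since we are dealing with a dissection, $F_e$ is shared with at most one such $T'$. So I would characterize, for each $e \in T$, whether there is another Jaeger tree $T'$ with $Q_{T'} \supset F_e$, and if so whether $T' \prec T$. The natural candidate is the tree $T' = (T - e) + e'$ obtained by adding the unique other edge $e'$ such that $T - e + e'$ is a spanning tree with the same associated simplex-facet; geometrically, $F_e$ spans an affine hyperplane inside the facet's affine hull, and the edges $e$ whose removal leaves $T - e$ affinely spanning the same hyperplane as some $T' - e'$ are governed by the fundamental cut $C^*(T,e)$. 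I would show: $F_e$ is a common facet with another Jaeger tree exactly when $C^*(T,e)$ is an \emph{undirected} cut — because if the cut is directed, the corresponding face of $\mathcal Q_G$ actually lies on the boundary of the root polytope (it is itself contained in a proper face of $\mathcal Q_G$), so it is not shared with any full-dimensional simplex from another tree; whereas if the cut is undirected, there is a unique ``flip'' edge $e'$ of opposite orientation across the cut, and $T' = T - e + e'$ is the unique other tree with $Q_{T'} \supset F_e$. This uniqueness-of-the-flip is where the semi-balanced condition does real work, via the characterization in Theorem \ref{thm:semibalanced_graphs_char}.

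Next I would determine, among the $e$ with $C^*(T,e)$ undirected, for which ones the flipped tree $T'$ satisfies $T' \prec T$ (rather than $T \prec T'$). Here I would run the tours of $T$ and $T'$ in parallel: they agree up to the first moment they differ, and by Definition \ref{def:ordering_of_Jaeger_trees_in_a_face} we have $T' \prec T$ precisely when at that first difference the common current node-edge pair $(t, \overrightarrow{th})$ has $\overrightarrow{th} \in T \setminus T'$, i.e. $\overrightarrow{th} = e$ and the tour reaches $e$ at its tail. So the claim reduces to: among edges $e \in T$ with undirected fundamental cut, the flip $T' = T - e + e'$ is a Jaeger tree with $T' \prec T$ if and only if $e$ is a tail-edge of $T$. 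The ``only if'' direction: if $e$ is a head-edge, I expect the tours to first differ at the edge $e'$ seen at \emph{its} tail (entering $T'$, not $T$), giving $T \prec T'$ — and additionally one must check $T'$ is genuinely a Jaeger tree. The ``if'' direction: if $e$ is a tail-edge, I'd argue $T' := T - e + e'$ is a Jaeger tree (using that $e'$ crosses the same cut in the opposite direction, so the Jaeger condition for $e'$ as a non-tree edge of $T'$ is satisfied, and the Jaeger conditions for all other non-tree edges are unaffected because the tours agree until $e$ is traversed by $T$), and that the first difference of the two tours occurs exactly when $(t, e)$ is current, with $e \in T$, hence $T' \prec T$.

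The main obstacle I anticipate is the bookkeeping in this parallel-tour comparison: one must verify that replacing $e$ by $e'$ does not disturb the tour before the moment $e$ would be traversed, that $e'$ is reached at the ``right'' endpoint in the altered tour, and — most delicately — that $T - e + e'$ is actually a Jaeger tree (the Jaeger condition for the newly-non-tree edge $e$, plus the unchanged conditions for all the old non-tree edges). This requires a careful lemma about how the Bernardi tour behaves under a ``cut flip,'' of the flavor of Bernardi's cycle/cocycle exchange lemmas, and localizing the effect of the exchange to the portion of the tour after $e$. I would also need to handle the base point: tail-edge vs. head-edge is defined via which component of $T - e$ contains $b_0$, and I must reconcile this with the tour-based statement, which is exactly the remark following the definition of tail-edge. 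Once that lemma is in place, the counting statement — that $r_T$ equals the number of undirected-fundamental-cut tail-edges — follows by pairing each such $e$ with the unique facet $F_e$ shared with the earlier tree $T - e + e'$, and checking no other facet of $Q_T$ can lie in $\bigcup_{T'\prec T} Q_{T'}$.
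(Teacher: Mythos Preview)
This theorem is not proved in the present paper; it is quoted from \cite{semibalanced}, so there is no proof here to compare against directly. That said, your outline has a real gap.

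You treat the Jaeger-tree dissection as if it were a triangulation: you assume each facet $F_e$ of $Q_T$ is either on $\partial\mathcal Q_G$ or is shared face-to-face with a \emph{unique} other simplex $Q_{T'}$, obtained by a single flip $T' = T - e + e'$. But the paper is explicit (end of Section~\ref{sec:intro}) that Jaeger trees yield only a dissection, not in general a triangulation; simplices need not meet along full faces. Two specific claims then fail. First, ``there is a unique flip edge $e'$ of opposite orientation across the cut'' is false: an undirected fundamental cut can contain many edges oriented opposite to $e$, and you have not argued which (if any) choice makes $T - e + e'$ a Jaeger tree. Second, and more seriously, even once you show that each good facet $F_e$ lies in the earlier union, the reverse containment --- that every point of $Q_T \cap \bigcup_{T' \prec T} Q_{T'}$ already lies in some such $F_e$ --- cannot be read off from exchange moves when simplices do not meet face-to-face. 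The mechanism actually required (visible in this paper's proof of Theorem~\ref{thm:shelling_of_symm_edge_poly}, which extends the cited result to the full polytope) is a coordinate argument: given $\mathbf p \in Q_T \cap Q_{T'}$ with $T' \prec T$, one locates a cut across which the two trees disagree, deduces that every edge of that cut carries coefficient $0$ in the expression of $\mathbf p$ over $T$, and hence $\mathbf p \in F_e$ for the first tree edge $e$ of $T$ reached in that cut. Your flip-based picture does not supply this step.
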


\begin{remark}
We remark that in \cite{semibalanced} those tail-edges of a Jaeger tree $T$ whose fundamental cuts are not directed are called the internally semipassive edges of $T$.
This relates to an activity-like notion defined there; moreover, 
\cite[Lemma 6.4]{semibalanced} provides another equivalent descriptions of these edges.
Namely, they are exactly the edges that arise as a ``first difference'' between the tours of $T$ and another Jaeger tree. 
%moreover, they are exactly the so called internally semi-passive edges of $T$; see \cite[section 6]{semibalanced} for details on the latter notion. %elements with respect to an activity-like property. 
%{\color{red} Az utols\'o le\'ir\'ast vagy el k\'ene hagyni, vagy el k\'ene magyar\'azni.}
\end{remark}

%\begin{lemma}
%		\label{l:activities_description}
%		Let $T$ be a Jaeger tree, and $\varepsilon \in T$ be an edge. The following statements are equivalent.
%		\begin{enumerate}[label=(\roman*)]
%			\item \label{elso}
%			$\varepsilon$ arises as a first difference between $T$ and some Jaeger tree preceding $T$ in $\prec$, i.e., there exists a  Jaeger tree $T'$ such that $\varepsilon\not\in T'$ but the tours of $T$ and $T'$ coincide until reaching $\varepsilon$.
%			\item \label{otodik} 
%			$\varepsilon$ has its tail in the base component of $T-\varepsilon$ and there exists an edge in the fundamental cut $C^*(T,\varepsilon)$ that stands opposite to $\varepsilon$.
%		\end{enumerate}
%\end{lemma}

In the next section, we show that the shellings of the facets can be put together into a shelling of the whole boundary (or, by coning over the origin, to a shelling of the dissection of $P_\cG$). Moreover, we will be able to use this shelling to obtain a formula for the $h^*$-polynomial of $P_\cG$.

\section{Shellability and $h^*$-vector}\label{sec:shelling_orders}

In this section we give a shellable dissection into unimodular simplices for an arbitrary symmetric edge polytope, and determine the $h$-vector of the shelling (and hence, by Proposition \ref{prop:h-dissect}, also the $h^*$-vector of the polytope).

We have already pointed out that (for an arbitrary fixed ribbon structure) Jaeger trees yield dissections of the facets of the symmetric edge polytope, moreover, for each facet, this dissection is shellable. By putting these dissections together we get a dissection of the whole surface of $P_\cG$. %If we add the origin to each Jaeger tree simplex, then of course we get a disection of $P_\cG$. Also, if the dissection of the surface was shellable, then the same orders remain shelling orders for the simplices ???kiegészítve with the origin.
In this section, we show that this dissection of the boundary of $P_\cG$ is also shellable.

Note that for any dissection of the boundary of $P_\cG$, we can add the origin to each simplex of the dissection, which results in a dissection for $P_\cG$. This dissection of $P_\cG$ is shellable if and only if the dissection of the boundary was shellable. Also, notice that the $h$-vectors coincide for the shelling of the boundary and for the shelling of $P_\cG$.
We will focus on the shellability of the dissection of the boundary.
%{\color{red} Elnevezés \cite{Root_poly_complete_graph}-ban: coning over a triangulation from the origin.}

In fact we will give two shelling orders. 
In the first one, our strategy will be to first choose an ordering of the facets, then build up the facets one-by-one in the chosen order. This will be a relatively flexible construction, where we do not need to use the same ribbon structure for different facet graphs. The only important point will be to use the same base point for each of them.

In the second shelling order, we use the same ribbon structure for all facet graphs, and give a recipe to directly compare Jaeger trees of different facets. This second type of shelling might start to build up a facet before some other one is finished.

\subsection{The face-by-face shelling}
Fix a vertex $b_0$ of $\cG$. For each facet graph $G$, take some ribbon structure and basis, such that the base point is $b_0$. (The ribbon structure and the base edge might be different for different facet graphs.)

Consider a real-valued weight function $f$ on the vertices of $\cG$ that associates $f(b_0)=1$ to the base vertex, while for $v\in V-b_0$, we let $-1\leq f(v)\leq 0$ in such a way that $\sum_{v\in V-b_0} f(v)=-1$ and the values $\{f(v)\mid v\in V-b_0\}$ are linearly independent over $\mathbb{Z}$.

Recall that for each facet graph $G$, there is a layering $l$ such that $l(v)-l(u)=1$ for every $\overrightarrow{uv}\in G$ and $|l(u)-l(v)|\leq 1$ for every $uv\in \cG$, moreover, two layerings $l$ and $l'$ define the same facet if and only if $l'(v)=l(v)+c$, with some constant $c$, for each vertex $v$. Let \[f(G)=\sum_{v\in V}l(v)\cdot f(v).\] 
Notice that as $\sum_{v\in V}f(v)=0$, this value is the same for any layering defining a given facet. 

We arrange the facet graphs in decreasing order according to this valuation. 
%of $f(G)$.
That is, let us label the facet graphs $G_1, G_2, \dots, G_M$ in such a way that $f(G_1) > f(G_2) > \dots> f(G_M)$.

\begin{remark}
Geometrically, $f$ can be viewed as a covector that is a generic perturbation of the dual of the vector $\mathbf b'_0$, which in turn is the projection of the standard basis vector $\mathbf b_0$ on the hyperplane of $P_\cG$. Each valuation $f(G)$ is the inner product of $f$ with the conormal of the facet which belongs to $G$. That is, facets are ordered in the following manner: We start from the origin and travel along a generic straight line specified by $f$. We list the facets as our trajectory crosses their planes; e.g., $G_1$ is the graph of the facet through which we first cross the boundary of $P_\cG$. By the time we `reach infinity,' half of the facets are recorded. Then we travel along the other half-line of our straight line in the same direction, that is, this time toward the origin, and continue the process. As a result, the symmetric pairs of the already-recorded facets come up in the opposite order. This is well known to be a shelling order of the facets. Our challenge is to combine it with the dissections of the facets and to keep track of the $h$-vector.
%{\color{red} Rem\'elem, ez t\"obb\'e-kev\'esb\'e helyes.}
\end{remark}

For each facet graph $G_i$ we denote the set of Jaeger trees of $G_i$, for the chosen ribbon structure and basis, by $\jaeg(G_i)$. We also put $\jaeg(\cG)=\bigcup_{G \text{ is a facet graph of $\cG$}}\jaeg(G)$.
Here it is somewhat important to take Jaeger trees as sets of directed edges; without that extra information, the same tree can have  the Jaeger property with respect to multiple facet graphs. 
In this subsection we order the trees in $\jaeg(\cG)=\jaeg(G_1)\cup \jaeg(G_2)\cup \dots\cup \jaeg(G_M)$ in the following way. 
%We will denote the ordering by $<_f$. 

\begin{defn}[face-by-face ordering of Jaeger trees, $<_f$]
For $T\in \jaeg(G_i)$ and $T'\in \jaeg(G_j)$ we have $T<_f T'$ if and only if either $i<j$ or $i=j$ and $T\prec T'$ in the ordering of Jaeger trees of $G_i$ given in Definition \ref{def:ordering_of_Jaeger_trees_in_a_face}. 
\end{defn}

\begin{thm}\label{thm:shelling_of_symm_edge_poly}
	The simplices corresponding to $\mathcal{J}(\cG)$ form a shellable dissection of the boundary of $P_\cG$, with shelling order $<_f$.
	The terms of the resulting $h$-vector are $$h_i=|\{T\in\jaeg(\cG)\mid T  \text{ has exactly $i$ tail-edges}\}|.$$
\end{thm}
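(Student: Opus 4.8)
## Proof Proposal

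The plan is to establish the theorem in three stages: first show that $<_f$ really is a total order on $\jaeg(\cG)$ that restricts to $\prec$ within each $\jaeg(G_i)$; then verify the shelling condition, namely that each new simplex meets the union of the previous ones in a nonempty union of its own facets; and finally identify the number of such facets with the number of tail-edges, which by Proposition \ref{prop:h-dissect} yields the stated $h$-vector.

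The first stage is essentially bookkeeping. Since the values $f(G_1)>\dots>f(G_M)$ are strictly decreasing (genericity of $f$ ensures no ties, using $\Z$-linear independence of the $f(v)$), and within a fixed facet graph $\prec$ is already known to be a total order by Theorem \ref{thm:shelling_for_semibalanced} and the remark following Definition \ref{def:ordering_of_Jaeger_trees_in_a_face}, concatenating them gives a total order. I would also recall here that the facets themselves, ordered by $f(G_i)$, form a shelling of $\partial P_\cG$: this is the classical ``line shelling'' described in the Remark, where one sweeps along a generic line through the origin, records facets as one crosses their affine hulls going to infinity, then sweeps the opposite ray back toward the origin so the antipodal facets appear in reverse order. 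I would cite this as standard (it is exactly the Bruggesser–Mani line shelling adapted to the conormal picture).

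The heart of the argument is the second stage. Fix a Jaeger tree $T\in\jaeg(G_i)$ with $i\ge 1$ (or $T$ not the very first tree). I need to show
\[
\tilde{\mathcal{Q}}_T \cap \Big(\bigcup_{T'<_f T}\tilde{\mathcal{Q}}_{T'}\Big)
\]
is a nonempty union of facets of the simplex $\tilde{\mathcal{Q}}_T=\conv\{0,\ \mathbf v-\mathbf u\mid \overrightarrow{uv}\in T\}$, and count those facets. The simplex has $|V(\cG)|$ facets: the one opposite the apex $0$ (call it the \emph{outer facet}, which lies in the affine hull of the facet of $P_\cG$ belonging to $G_i$), and for each edge $e\in T$ a facet $F_e$ obtained by deleting the vertex $\mathbf v-\mathbf u$ of $e$. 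The strategy is to treat these two kinds of facets separately.

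For the outer facet: by Theorem \ref{thm:shelling_for_semibalanced}, within $\jaeg(G_i)$ the trees $T'\prec T$ cover exactly those facets $F_e$ of $\tilde{\mathcal Q}_T$ with $e$ a tail-edge whose fundamental cut is not directed; but that theorem concerns the dissection of $\mathcal Q_{G_i}$, i.e. it accounts for the facets of the outer facet viewed inside the facet-polytope. I would lift this: the earlier trees from $\jaeg(G_i)$ cover precisely the part of $\partial\tilde{\mathcal Q}_T$ lying in the outer facet that is ``already shelled within $G_i$.'' For the facets $F_e$ with $e$ a \emph{tail-edge} (whether or not the cut is directed), I claim each is covered either by an earlier tree in $\jaeg(G_i)$ or by a tree belonging to an earlier facet graph $G_j$, $j<i$: geometrically, $F_e$ lies on a hyperplane through $0$, and crossing that hyperplane (in the sweep) corresponds to flipping the cut $C^*(T,e)$, which — because $e$ is a tail-edge and because of the ordering by $f$ — lands us in a facet $G_j$ with $f(G_j)>f(G_i)$ when the cut is directed, and stays inside $G_i$ among earlier trees when it is not. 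Conversely, for a \emph{head-edge} $e$, the facet $F_e$ is \emph{not} covered by anything earlier, because flipping such a cut moves us to a later facet graph (or to a later tree of $G_i$). The key computation supporting this is: the layering comparison $f(G_j)$ vs $f(G_i)$ across a fundamental-cut flip is governed by the sign of $\sum_{v\in V_1}f(v)$ where $V_1$ is the side of the cut \emph{not} containing $b_0$, and this is forced to be negative precisely because $f(v)\le 0$ off $b_0$ and $f(b_0)=1$ sits on the other side — so a tail-edge flip always decreases the layering value when it produces a genuinely new facet graph, and increases it otherwise. I would make this precise with one short lemma. That the intersection is nonempty (at least one covered facet) follows since $T$ is not $<_f$-minimal: either it is not $\prec$-minimal in $G_i$ (handled by Theorem \ref{thm:shelling_for_semibalanced}) or $i\ge 2$ and the whole previous facet $G_{i-1}$ has been built, which shares at least the outer facet's boundary appropriately.

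The main obstacle I anticipate is exactly this second stage: carefully matching up the combinatorial ``flip a fundamental cut'' operation with the geometry of which facet-hyperplane a given facet $F_e$ of $\tilde{\mathcal Q}_T$ lies on, and checking that the line-shelling order of the facets of $P_\cG$ is compatible with $\prec$ inside each facet in the precise sense needed — i.e. that no simplex from a \emph{later} facet graph can sneak in to cover a facet of $\tilde{\mathcal Q}_T$ that should remain exposed, and no facet that should be covered is missed. Once the bijection ``covered facets of $\tilde{\mathcal Q}_T$ $\leftrightarrow$ tail-edges of $T$'' is nailed down, the third stage is immediate: $r_i=$ number of tail-edges of $T$, so $h_k=|\{T\in\jaeg(\cG): T \text{ has exactly } k \text{ tail-edges}\}|$, and we are done. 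I would also double-check the endpoint/orientation edge cases: a tail-edge whose fundamental cut \emph{is} directed (these are the edges not appearing as ``first differences'' within $G_i$, per the Remark) must be shown to be covered by an earlier facet graph rather than an earlier tree of $G_i$ — this is where using the \emph{same base vertex} $b_0$ across all facet graphs, but allowing different ribbon structures, is exactly what makes the argument go through.
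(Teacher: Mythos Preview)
Your overall architecture matches the paper's: split tail-edges into those with directed versus undirected fundamental cut, use the internal shelling (Theorem \ref{thm:shelling_for_semibalanced}) for the latter, and pass to an earlier facet graph for the former. However, there are two genuine gaps.

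First, your treatment of the converse direction is too weak. You argue that for a head-edge $e$, the facet $F_e$ is ``not covered by anything earlier, because flipping such a cut moves us to a later facet graph.'' But this only tells you about one particular neighbouring simplex; it does not rule out that some $\mathcal{Q}_{T'}$ with $T'\in\jaeg(G_j)$, $j<i$, from a \emph{completely unrelated} facet graph, meets $\mathcal Q_T$ in a point lying on $F_e$ but on no tail-edge facet. The paper does not attempt to show head-edge facets are untouched. Instead it proves directly that any $\mathbf p\in\mathcal Q_T\cap\mathcal Q_{T'}$ with $T'<_f T$ lies in $\mathcal Q_{T-e}$ for some tail-edge $e$. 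The key tool for this is a second lemma (Lemma \ref{l:nonfirst_orientation_exits_dircut}): whenever $f(G_j)>f(G_i)$ there is a cut $(V_0,V_1)$ with $b_0\in V_0$ such that every present edge of $G_i$ across it points from $V_0$ to $V_1$ and every present edge of $G_j$ points the other way. Since $\mathbf p$ is a convex combination over edges of $T$ and also over edges of $T'$, every edge of the cut must have coefficient zero in $\mathbf p$; the first edge of $T$ in that cut is then a tail-edge $e$ with $\mathbf p\in\mathcal Q_{T-e}$. This same lemma also supplies the nonemptiness you hand-wave (``shares at least the outer facet's boundary appropriately''): it guarantees $T$ has a tail-edge whenever $G_i$ is not the first facet graph.

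Second, your ``flip the fundamental cut'' construction is too naive. Reversing the edges of a directed cut in $G_i$ need not produce another facet graph: hidden edges of $\cG$ may cross the cut, and the resulting layering can fail condition (i) of Theorem \ref{thm:facets_of_P_G}. The paper's Lemma \ref{l:reversing_cut_pointing_away} handles this carefully, adjusting the layering on components of $G_i[V_1]$ depending on whether they are joined to $V_0$ by hidden edges, and checks connectedness of the result. Your ``one short lemma'' would have to do all of this.

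Finally, working with the coned simplices $\tilde{\mathcal Q}_T$ and an ``outer facet'' is harmless but unnecessary: the theorem is about the boundary dissection by the $\mathcal Q_T$, whose facets are exactly the $\mathcal Q_{T-e}$, and the $h$-vectors coincide anyway.
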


Before giving a proof, let us point out a corollary for the $h^*$-vector of $P_\cG$.

\begin{thm}\label{cor:h^*_of_sym_edge_poly}
$$(h^*_{P_\cG})_i = |\{T\in\jaeg(\cG)\mid T  \text{ has exactly $i$ tail-edges}\}|.$$
\end{thm}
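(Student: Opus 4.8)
The plan is to deduce Theorem~\ref{cor:h^*_of_sym_edge_poly} directly from Theorem~\ref{thm:shelling_of_symm_edge_poly} together with Proposition~\ref{prop:h-dissect}. The strategy has three ingredients that must be lined up. First, I would recall that by Theorem~\ref{thm:shelling_of_symm_edge_poly} the family $\jaeg(\cG)$, with the origin coned onto each simplex, gives a shellable dissection of $P_\cG$ into unimodular simplices (unimodularity coming from Proposition~\ref{prop:unimodular_simplices}), whose $h$-vector under the shelling order $<_f$ is
\[ h_i = |\{T\in\jaeg(\cG)\mid T \text{ has exactly } i \text{ tail-edges}\}|. \]
Here I should be careful about dimensions: $P_\cG$ is $d$-dimensional with $d=|V(\cG)|-1$ (it lies in the coordinate-sum-zero hyperplane), and each spanning tree has $d$ edges, so the number of tail-edges of any $T\in\jaeg(\cG)$ ranges between $0$ and $d$; this matches the fact that the last nonzero entry of the $h$-vector of a shelling of a $d$-polytope sits in position at most $d$, as recalled after Proposition~\ref{prop:h-dissect}.

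Second, I would invoke Proposition~\ref{prop:h-dissect}: for any shellable dissection of a $d$-dimensional lattice polytope into unimodular simplices, the $h^*$-polynomial is recovered from the $h$-polynomial by $h^*(t) = t^{d+1}h(1/t)$, i.e.\ $(h^*_{P_\cG})_i = h_i$ coefficient-by-coefficient. Combining this with the explicit description of $h_i$ from Theorem~\ref{thm:shelling_of_symm_edge_poly} gives exactly
\[ (h^*_{P_\cG})_i = |\{T\in\jaeg(\cG)\mid T \text{ has exactly } i \text{ tail-edges}\}|, \]
which is the assertion of Theorem~\ref{cor:h^*_of_sym_edge_poly}.

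Third — and this is the only substantive point beyond a formal chain of citations — I would make explicit that the statement of Theorem~\ref{cor:h^*_of_sym_edge_poly} as advertised in the introduction (``exactly $i$ edges that are in the tree and are first reached at their tail'') is the same as ``exactly $i$ tail-edges.'' This is immediate from the Definition of tail-edge together with the observation recorded right after it: $\overrightarrow{th}\in T$ is a tail-edge precisely when $(t,th)$ precedes $(h,th)$ in the tour of $T$, i.e.\ when the tree edge is first reached at its tail. So no extra work is needed, but it is worth stating for the reader.

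\textbf{Main obstacle.} Given the scaffolding already in place, there is essentially no hard analytic step left in Theorem~\ref{cor:h^*_of_sym_edge_poly} itself — the real content lives in Theorem~\ref{thm:shelling_of_symm_edge_poly} (the shellability of the glued dissection and the identification of its $h$-vector). The one thing to be vigilant about is the bookkeeping of dimension and the passage through Proposition~\ref{prop:h-dissect}: one must confirm that the dissection of the \emph{boundary} of $P_\cG$ upgrades, via coning from the origin, to a shellable dissection of $P_\cG$ with the \emph{same} $h$-vector (this is exactly the remark preceding the face-by-face shelling), and that the polytope $P_\cG$ is a lattice polytope of the dimension we claim, so that Proposition~\ref{prop:h-dissect} applies verbatim. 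Once those compatibility checks are in hand, the corollary follows in a single line.
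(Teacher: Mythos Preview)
Your proposal is correct and follows essentially the same route as the paper's own proof: cone the shellable boundary dissection of Theorem~\ref{thm:shelling_of_symm_edge_poly} to obtain a shellable unimodular dissection of $P_\cG$ (unimodularity via Proposition~\ref{prop:unimodular_simplices}), then apply Proposition~\ref{prop:h-dissect} to identify the $h$-vector with the $h^*$-vector. Your added remarks on dimension and on the equivalence of ``tail-edge'' with ``tree edge first reached at its tail'' are sound but not strictly necessary.
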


\begin{proof}
Consider the collection $\{\tilde{\mathcal{Q}}_T\mid T\in \jaeg(\cG)\}$. It results from coning over a dissection of the boundary. Hence by Theorem \ref{thm:shelling_of_symm_edge_poly} this is a shellable dissection whose $h$-vector is as specified in Theorem \ref{thm:shelling_of_symm_edge_poly}. By Proposition \ref{prop:unimodular_simplices} the simplices are unimodular. Hence by Proposition \ref{prop:h-dissect}, the $h^*$-vector agrees with the $h$-vector.
\end{proof}

To prove Theorem \ref{thm:shelling_of_symm_edge_poly}, first we need two lemmas on the relationship of facets.

\begin{lemma}\label{l:reversing_cut_pointing_away}
	%If $C^*$ is a directed cut in the semi-balanced orientation $G$ that points away from its side containing $b_0$, then by reversing $C^*$, we get another semi-balanced orientation $G'$ for which $f(G')>f(G)$.
    If $(V_0,V_1)$ is a directed cut in the facet graph $G$ such that $b_0\in V_0$ and all edges point from $V_0$ to $V_1$, then there exists another facet graph $G'$ such that $f(G')>f(G)$, moreover, edges of $G$ outside the cut $(V_0,V_1)$ are present in $G'$ with the same orientation as in $G$ (that is, each edge $uv\in\cG$ with either $u,v\in V_0$ or $u,v\in V_1$ is either not present in $G$, or it is present in $G'$ with the same orientation as in $G$), and
    $(V_0,V_1)$ is also a directed cut in $G'$, but in $G'$ each edge points from $V_1$ to $V_0$.
\end{lemma}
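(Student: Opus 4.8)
\textbf{Proof proposal for Lemma \ref{l:reversing_cut_pointing_away}.}

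The plan is to produce $G'$ explicitly by modifying the layering $l$ that defines the facet graph $G$. Recall from Theorem \ref{thm:facets_of_P_G} (together with Theorem \ref{thm:semibalanced_graphs_char}) that $G$ is encoded by a layering $l\colon V\to\mathbb{Z}$ with $l(v)-l(u)=1$ for every $\overrightarrow{uv}\in G$ and $|l(v)-l(u)|\le 1$ for every $uv\in\cG$. Since $(V_0,V_1)$ is a directed cut in $G$ with all edges going from $V_0$ to $V_1$, the layering satisfies $l(w)-l(v)=1$ for each edge $vw\in\cG$ crossing the cut with $v\in V_0$, $w\in V_1$; in particular $l$ takes values on $V_1$ that are, in the relevant sense, ``one above'' the adjacent values on $V_0$. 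The natural move is to push $V_1$ down by $2$: define $l'(v)=l(v)$ for $v\in V_0$ and $l'(w)=l(w)-2$ for $w\in V_1$.

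First I would check that $l'$ is a valid layering, i.e.\ that it satisfies conditions (i) and (ii) of Theorem \ref{thm:facets_of_P_G}. For an edge with both endpoints in $V_0$ or both in $V_1$, the difference $l'(v)-l'(u)$ equals $l(v)-l(u)$, so (i) is preserved and the edge is present in $G'$ iff it was present in $G$, with the same orientation — this is exactly the ``edges outside the cut are unchanged'' assertion. For an edge $vw$ crossing the cut ($v\in V_0$, $w\in V_1$), we had $l(w)-l(v)=1$, hence $l'(w)-l'(v)=1-2=-1$, so $|l'(w)-l'(v)|=1$: the edge is still present, it still lies in the cut, but now it is oriented from $w$ (in $V_1$) to $v$ (in $V_0$), i.e.\ the cut has been reversed, exactly as claimed. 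Condition (ii) (that $E_{l'}$ is spanning with the right number of components) then follows because $E_{l'}$ contains $E_l$ restricted to $G[V_0]$ and $G[V_1]$ together with \emph{all} the cut edges, and $E_l$ already contained at least those cut edges that were present — in fact since $(V_0,V_1)$ was a directed cut in $G$, every $\cG$-edge across it was present in $G$, so $E_{l'}\supseteq E_l$ and $E_{l'}$ is spanning. Thus $G'$, the facet graph of $l'$, is a genuine facet graph with the stated relationship to $G$.

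It remains to compare the valuations $f(G)$ and $f(G')$. By definition $f(G')-f(G)=\sum_{v\in V}(l'(v)-l(v))f(v)=-2\sum_{w\in V_1}f(w)$. Now $b_0\in V_0$, so $b_0\notin V_1$, and by construction $f(w)\le 0$ for every $w\in V-b_0\supseteq V_1$; moreover $V_1\neq\emptyset$ and the values $\{f(v)\mid v\in V-b_0\}$ are $\mathbb{Z}$-linearly independent, in particular nonzero, so $\sum_{w\in V_1}f(w)<0$, whence $f(G')-f(G)=-2\sum_{w\in V_1}f(w)>0$. This gives $f(G')>f(G)$ and completes the proof.

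\textbf{Main obstacle.} The argument is essentially a bookkeeping exercise once the right move (shifting $V_1$ by $2$) is identified; the only place requiring a little care is verifying condition (ii) of Theorem \ref{thm:facets_of_P_G} for $l'$ — that $E_{l'}$ still induces a spanning subgraph with the correct number of connected components. The point to nail down is that a \emph{directed} cut in $G$ has all of its $\cG$-edges present in $G$ (since any hidden edge across it would have $l$-values differing by $0$, contradicting that it is in the cut), so no cut edge is lost when passing to $l'$, and edges inside $V_0$ and $V_1$ are untouched; hence connectivity of $(V,E_{l'})$ is inherited from that of $(V,E_l)$.
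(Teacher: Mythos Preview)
There is a genuine gap in your argument, and it is precisely at the place you flagged as the ``main obstacle.'' Your claim that a directed cut in $G$ must contain every $\cG$-edge across $(V_0,V_1)$ is false. The phrase ``directed cut in the facet graph $G$'' refers only to the edges of $G$: it says that every edge of $G$ between $V_0$ and $V_1$ is oriented from $V_0$ to $V_1$. It says nothing about hidden edges of $\cG$ (edges not present in $G$). If $vw\in\cG$ with $v\in V_0$, $w\in V_1$ is hidden in $G$, then $l(w)-l(v)=0$, and your shift gives $l'(w)-l'(v)=-2$, violating condition (i) of Theorem~\ref{thm:facets_of_P_G}. So your $l'$ is in general not a valid layering.

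This is exactly what the paper's proof has to work around. Instead of shifting all of $V_1$ by $2$, the paper breaks $G[V_1]$ into connected components $V_{1,1},\dots,V_{1,k}$ and treats them differently: components that have a hidden $\cG$-edge to $V_0$ are shifted relative to $V_0$ by only $1$ (so those hidden edges become the new cut edges, oriented toward $V_0$, while the old $G$-cut edges to those components become hidden), and the remaining components are shifted by $2$ (so all their old cut edges reverse). One then has to re-check connectivity, since in the first case cut edges of $G$ are lost; this is where the existence of the hidden edge $x_iy_i$ is used. Your uniform-shift-by-$2$ argument is correct when $\cG$ is bipartite (no hidden edges at all), but the general case needs the finer construction.
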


We note that $G$ and $G'$ above may not share the same edge set, not even when orientations are ignored. Thus as sets of edges, the cuts in $G$ and in $G'$ that are induced by the same partition $(V_0,V_1)$ of $V$, may be different.

\begin{proof}
	Let $l$ be a layering for $G$.
	Suppose that $G[V_1]$ has $k$ connected components ($k$ might be 1), and let the vertex sets of these components be $V_{1,1}, \dots, V_{1,k}$ (with $V_1=V_{1,1}\sqcup \dots \sqcup V_{1,k}$).
	Without loss of generality, we can suppose that there is some $r$ with $0\leq r \leq k$ such that for $i=1, \dots, r$ there is an edge $x_iy_i\in \cG$ with $x_i\in V_0$, $y_i\in V_{1,i}$, and $l(x_i)=l(y_i)$ (that is, an edge of $\cG$ that is not present in $G$), and for $i>r$, there is no such edge.
	
	Take the layering $l'$ defined by 
	$$l'(v) = \left\{\begin{array}{cl} 
	l(v)+1 & \text{if $v\in V_0$},  \\
	l(v) & \text{if $v\in V_{1,i}$ for $i\leq r$}, \\
	l(v)-1 & \text{if $v\in V_{1,i}$ for $i > r$}.
	\end{array} \right.
	$$
	We claim that $l'$ gives a facet graph $G'$ as claimed. We simultaneously show that $l'$ 
	%is a good layering, 
	satisfies the conditions of Theorem \ref{thm:facets_of_P_G}
	and that $G'$ satisfies the conditions of the lemma. If $uv\in \cG$ is an edge with either $u,v\in V_0$, $u,v\in V_{1,1} \cup \dots \cup V_{1,r}$, or $u,v\in V_{1,r+1} \cup \dots \cup V_{1,k}$, then $l'(v)-l'(u)=l(v)-l(u)$. 
	If $v\in V_{1,1} \cup \dots \cup V_{1,r}$ and $u\in V_{1,r+1}\cup \dots \cup V_{1,k}$ then $l'(v)-l'(u)=l(v)-l(u)+1$, but (since the $G[V_{1,i}]$ are connected components) we had $l(v)-l(u)=0$. This implies that the $l'$-difference for edges outside the cut $(V_0,V_1)$ is at most 1, moreover, while we may gain new edges in $G'$, the orientation of each edge of $G$ outside the cut remains the same.
	
	If $u\in V_0$ and $v\in V_{1,i}$ with $i\leq r$, then we had either $l(v)-l(u)=0$ or $l(v)-l(u)=1$ as each edge of the cut $(V_0,V_1)$ pointed toward $V_1$ (if it was present in $G$). Hence for these edges, we have $l'(v)-l'(u)=-1$ if $uv$ was not present in $G$ (and we supposed that there is at least one such edge) and $l'(v)-l'(u)=0$ if $uv$ was present in $G$. Thus, each such edge has layer-difference at most $1$ with respect to $l'$, and each edge of $G'$ between $V_0$ and $V_{1,i}$ points toward $V_0$.
	
	If $uv$ is an edge of $\cG$ so that $u\in V_0$ and $v\in V_{1,i}$ with $i>r$, then we had $l(v)-l(u)=1$ by the definition of $r$. Hence for these edges, we have $l'(v)-l'(u)=-1$. Thus, each such edge has $l'$-difference at most 1, and each edge of $G'$ between $V_0$ and $V_{1,i}$ points toward $V_0$.
	
	To show that $l'$ 
	%is a good layering, 
	gives a facet graph $G'$,
	it remains to show that $G'$ is connected. The induced subgraphs $G'[V_0], G'[V_{1,1}], \dots, G'[V_{1,k}]$ remain connected, as the edges within them did not change. For $i\leq r$, the graph $G'[V_{1,i}]$ stays connected to $G'[V_0]$ through the edge $x_iy_i$. For $i>r$, all edges between $V_0$ and $V_{1,i}$ remain in $G'$, hence $G'[V_{1,i}]$ is also connected to $G'[V_0]$ for $i>r$.

    Finally we need to ascertain that $f(G')>f(G)$, but this follows from the definition of $f$ and the obvious $f(G')=f(G)+\sum_{v\in V_0}f(v)-\sum_{v\in V_{1,r+1}\cup \dots \cup V_{1,k}}f(v)$: as $b_0\in V_0$, we have $\sum_{v\in V_0}f(v)>0$ and $\sum_{v\in V_{1,r+1}\cup \dots \cup V_{1,k}}f(v)<0$.
\end{proof}

\begin{lemma}\label{l:nonfirst_orientation_exits_dircut}
	%If $G_i$ and $G_j$ are semi-balanced orientations with $f(G_j)>f(G_i)$, then there exist a directed cut $C^*$ in $G_i$ that points away from the side containing $b_0$. Moreover, each edge of $C^*$ is reversed in $G_j$. 
	If $G_i$ and $G_j$ are facet graphs with $i>j$ (that is, $f(G_i)<f(G_j)$), then there exists a cut $(V_0,V_1)$ such that $b_0\in V_0$ and each edge points from $V_1$ to $V_0$ in $G_j$ while each edge points from $V_0$ to $V_1$ in $G_i$. 
\end{lemma}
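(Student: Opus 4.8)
The plan is to extract the cut directly from the two layerings. Fix layerings $l_i$ and $l_j$ realizing $G_i$ and $G_j$ in the sense of Theorem~\ref{thm:facets_of_P_G}, normalized so that $l_i(b_0)=l_j(b_0)=0$, and put $\delta(v)=l_i(v)-l_j(v)$ for $v\in V$. Then $\delta\colon V\to\Z$ with $\delta(b_0)=0$, and since $|l_i(u)-l_i(v)|\le 1$ and $|l_j(u)-l_j(v)|\le 1$ for every $uv\in E(\cG)$, we have $|\delta(u)-\delta(v)|\le 2$ along every edge. First I would show that $\delta$ takes a positive value somewhere. Because $f(G)=\sum_{v\in V}l(v)f(v)$ does not depend on the chosen layering of $G$, the hypothesis $f(G_i)<f(G_j)$ rewrites as $\sum_{v\in V}\delta(v)f(v)<0$. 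Here $\delta(b_0)f(b_0)=0$ and $f(v)\le 0$ for every $v\neq b_0$, so if $\delta$ were $\le 0$ everywhere then every summand $\delta(v)f(v)$ would be nonnegative, a contradiction; hence $\delta(v)\ge 1$ for some vertex $v$.

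Now let $V_1=\{v\in V\mid \delta(v)\ge 1\}$ and $V_0=V\setminus V_1$. Both sides are nonempty: $b_0\in V_0$, and $V_1\neq\emptyset$ by the previous step, so $(V_0,V_1)$ is a cut of $\cG$ with $b_0\in V_0$. It remains to check the orientation conditions. Take an edge $uv\in E(\cG)$ crossing the cut, say $u\in V_0$ and $v\in V_1$, so $\delta(v)-\delta(u)\in\{1,2\}$. Writing $a=l_i(v)-l_i(u)\in\{-1,0,1\}$ and $b=l_j(v)-l_j(u)\in\{-1,0,1\}$ with $a-b=\delta(v)-\delta(u)$, the only possibilities are $(a,b)\in\{(0,-1),(1,0),(1,-1)\}$. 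In each of these, whenever $a\neq 0$ the edge is present in $G_i$ (as $|l_i(v)-l_i(u)|=1$) and is oriented from $u$ to $v$, i.e.\ from $V_0$ to $V_1$; and whenever $b\neq 0$ the edge is present in $G_j$ and is oriented from $v$ to $u$, i.e.\ from $V_1$ to $V_0$. Thus every edge of $G_i$ across the cut points from $V_0$ to $V_1$ and every edge of $G_j$ across the cut points from $V_1$ to $V_0$; and since $G_i$ and $G_j$ are connected spanning subgraphs of $\cG$ while $V_0,V_1$ are nonempty, each of these cuts is nonempty, so we genuinely obtain a directed cut of $G_i$ and of $G_j$, in opposite directions, which is the assertion.

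I do not expect a serious obstacle here: the core of the argument is the three-line enumeration of the pairs $(a,b)\in\{-1,0,1\}^2$ with $1\le a-b\le 2$. The two things that must be set up carefully are the normalization of the layerings — which is what lets the threshold for $V_1$ sit at level $1$ and makes $b_0\in V_0$ automatic — and the use of the weight function $f$: it is exactly the sign pattern of $f$, positive at $b_0$ and nonpositive elsewhere, underlying the ordering $f(G_i)<f(G_j)$, that forces $\delta$ past $0$ and hence makes $V_1$ nonempty.
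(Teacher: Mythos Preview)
Your proof is correct and follows essentially the same approach as the paper: both threshold the difference of the two layerings to produce the cut, and both use the sign pattern of $f$ to place $b_0$ on the correct side. The paper shifts $l_j-l_i$ to be nonnegative and then cuts at the level of $b_0$, whereas you normalize $l_i(b_0)=l_j(b_0)=0$ and cut $\delta=l_i-l_j$ at level $1$; these two choices yield the same partition $(V_0,V_1)$, and your case analysis of $(a,b)$ is just a crisper packaging of the paper's edge-by-edge check.
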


\begin{proof}
	Let $l_j$ be the layering of $G_j$ and $l_i$ be the layering of $G_i$. Shift the layerings so that $l_j(v)\geq l_i(v)$ for each $v\in V$, but there exists some $v\in V$ with $l_i(v)=l_j(v)$ (this is achievable for any two layerings). Then the difference $l_j(v)-l_i(v)$ is a nonnegative integer for each $v\in V$. Let $U(k)=\{v\in V\mid l_j(v)-l_i(v)=k\}$.
	
	For each edge $uv$ of $\cG$, we have $|l_i(u)-l_i(v)|\leq 1$ and similarly for $l_j$, hence $|(l_j(v)-l_i(v))-(l_j(u)-l_i(u))|=|(l_j(v)-l_j(u))+ (l_i(u)-l_i(v))|\leq 2$. Thus, any edge of $\cG$ connects vertices within some $U(k)$, or between $U(k)$ and $U(k+1)$, or between $U(k)$ and $U(k+2)$ for some $k$. Also, any edge of $\cG$ between $U(k)$ and $U(k+2)$ points toward $U(k+2)$ in $G_j$ and toward $U(k)$ in $G_i$. Similarly, any edge of $\cG$ between $U(k)$ and $U(k+1)$ is either not in $G_j$ and points toward $U(k)$ in $G_i$, or it points toward $U(k+1)$ in $G_j$ and it is not present in $G_i$. Hence for any $k\geq 0$, the partition $(U(0) \sqcup \dots \sqcup U(k), U(k+1)\sqcup U(k+2)\sqcup\dots )$ induces an oriented cut in both $G_i$ and $G_j$, with each edge oriented toward $U(0)\sqcup \dots \sqcup U(k)$ in $G_i$ and each edge oriented toward $U(k+1)\sqcup U(k+2)\sqcup \dots$ in $G_j$. %Moreover, if say, $V(k)$ is empty, then the partition $(V(0) \cup \dots V(k-1), V(k+1)\cup V(k+2) \dots )$ induces an oriented cut in both $G_i$ and $G_j$. 
	
	We claim that at least one of these cuts satisfies the condition of the Lemma. 
	For this, it is enough to show that $b_0\not\in U(0)$. Indeed, if $b_0\in U(k)$ for $k\neq 0$, then the sets $V_0=U(k)\sqcup U(k+1)\sqcup \dots$ and $V_1=U(0)\sqcup \dots \sqcup U(k-1)$ provide the desired cut.
	
	Now notice that if we had $b_0\in U(0)$, then the only vertex with $f>0$ would have the same coefficient in $f(G_i)$ and $f(G_j)$, while some vertices with negative $f$-value would have a larger coefficient in $f(G_j)$ than in $f(G_i)$. This would imply $f(G_j)<f(G_i)$ and thus contradict our assumption.
\end{proof}

\begin{remark}
Although we will not require it later, let us point out that Lemma \ref{l:nonfirst_orientation_exits_dircut} provides an explicit description of the first facet graph $G_1$: it is defined by the layering $l(v)=-\dist(b_0,v)$, where $\dist(b_0,v)$ means the minimal number of edges in a path between $b_0$ and $v$ in the (undirected) graph $\cG$.
    
    The function $l$ clearly gives us a layering, that is, the difference between the endpoints of any edge is at most $1$. Moreover, we cannot have a directed cut $(V_0,V_1)$ with $b_0\in V_0$ and each edge oriented toward $V_1$, since for any vertex $v\in V_1$, the edges of a shortest path to $b_0$ all point toward $b_0$. However, by Lemma \ref{l:nonfirst_orientation_exits_dircut}, if this were not the first facet graph then we would need to have such a cut.
\end{remark}

\begin{proof}[Proof of Theorem \ref{thm:shelling_of_symm_edge_poly}] 
Take an arbitrary Jaeger tree $T$ that is not the first Jaeger tree in $\jaeg(\cG)$ according to $<_f$.
We need to prove that the simplex $\mathcal{Q}_T$ meets $\bigcup_{T'<_f T}\mathcal{Q}_{T'}$ in the union of the facets $\mathcal{Q}_{T-e}$, where $e$ ranges over the tail-edges of $T$. First, we will show that the facet $\mathcal{Q}_{T-e}$ is a subset of $\bigcup_{T'<T}\mathcal{Q}_{T'}$ when $e$ is a tail-edge of $T$. 
Then we will show that if $T$ is not the first Jaeger tree, then the simplex $\mathcal{Q}_T$ is not disjoint from $\bigcup_{T'<_f T}\mathcal{Q}_{T'}$, and any point $\mathbf p\in \mathcal{Q}_T \cap (\bigcup_{T'<_f T}\mathcal{Q}_{T'})$ is in $\mathcal{Q}_{T-e}$ for some tail-edge $e$ of $T$. 

Let $G_i$ be the facet graph that contains $T$ as a Jaeger tree.
If some $e\in T$ is a tail-edge of $T$ and $C^*(T,e)$ is not an oriented cut, then by Theorem \ref{thm:shelling_for_semibalanced}, 
$$
\mathcal{Q}_{T-e}\subseteq \bigcup_{T'\in\jaeg(G_i):\, T' \prec T} \mathcal{Q}_{T'}\subseteq \bigcup_{T'<_f T}\mathcal{Q}_{T'}.
$$
%By Lemma (szemipasszivitás jellemzései), for any ribbon structure, if the base point is $b_0$, then an edge $e\in T$ is internally semipassive if and only if $e$ is reached at its tail in the tour of $T$ and $C(T,e)$ is not an oriented cut. Hence $\mathcal{Q}_{T-e}$ is glued in the interior of its face if and only if $e$ satisfies the above condition. 

If $e\in T$ is a tail-edge and $C^*(T,e)$ is an oriented cut, then $C^*(T,e)$ points away from its side containing $b_0$. %Let $V_0$ and $V_1$ be the two parts of $C^*(T,e)$ with $b_0\in V_0$. 
Now by Lemma \ref{l:reversing_cut_pointing_away}, there exists another facet graph $G_j$ with $f(G_j)>f(G_i)$ (and hence $j<i$) such that all edges of $G_i$ outside $C^*(T,e)$ are in $G_j$ with the same orientation. %and all edges of $G_j$ between $V_0$ and $V_1$ point from $V_1$ to $V_0$. Moreover, 
Hence $\mathcal{Q}_{T-e}\subseteq Q_{G_i}\cap Q_{G_j}$ and in conclusion, $\mathcal{Q}_{T-e}\subseteq Q_{G_j}\subseteq \bigcup_{T'<_f T}\mathcal{Q}_{T'}$.

Next we claim that if $T$ is not the first Jaeger tree, then $\mathcal{Q}_T$ is not disjoint from $\bigcup_{T'<T}\mathcal{Q}_{T'}$. 
If $T$ is not the first Jaeger tree from its facet graph $G_i$, then $\mathcal{Q}_T$ intersects the union of the simplices of previous Jaeger trees of $G_i$ by Theorem \ref{thm:shelling_for_semibalanced} applied to $G_i$. If $T$ is the first Jaeger tree of its facet, then it is enough to show that there is a tail-edge $e\in T$ as we already proved that in this case $\mathcal{Q}_{T-e}\subseteq \bigcup_{T'<_f T}\mathcal{Q}_{T'}$.
If $T$ is the first Jaeger tree in its facet, but not the first tree altogether, then its facet is not the first facet. In this case, by Lemma \ref{l:nonfirst_orientation_exits_dircut}, there exists a directed cut $C^*$ in $G_i$ that points away from its side containing $b_0$. %Then necessarily 
Now when we first take an edge from $C^*$ in the tour of $T$, it will be reached at its tail, as claimed.

Finally we show that any point $\mathbf p\in \mathcal{Q}_T \cap (\bigcup_{T'<_f T}\mathcal{Q}_{T'})$ is in $\mathcal{Q}_{T-e}$ for some tail-edge $e$ of $T$. 
%This will finish the proof.
Suppose that $\mathbf p\in \mathcal{Q}_T\cap \mathcal{Q}_{T'}$ where $T'<_f T$. If $T'\in\jaeg(G_i)$, then by Theorem \ref{thm:shelling_for_semibalanced}, there is a tail-edge $e\in T$ (moreover, $C(T,e)$ is not directed) such that $\mathbf{p}\in \mathcal{Q}_{T-e}$. 
If $T'\in \jaeg(G_j)$ for $j \neq i$, then necessarily $j<i$. By Lemma \ref{l:nonfirst_orientation_exits_dircut} there is a directed cut $(V_0,V_1)$ %in $G_i$ 
with $b_0\in V_0$ so that each edge points from $V_0$ to $V_1$ in $G_i$. Moreover, in $G_j$, each edge between $V_0$ and $V_1$ points toward $V_0$. Hence if $\mathbf{p}\in \mathcal{Q}_T \cap \mathcal{Q}_{T'}$ (that is, $\mathbf p$ is a convex linear combination of vectors representing edges in $T$, as well as those in $T'$), then the coefficient of each edge of the cut $(V_0,V_1)$ needs to be zero in $\mathbf{p}$. 
Let $e$ be the first edge of $T$ that we reach from the cut $(V_0,V_1)$ in the tour of $T$. As in $G_i$ each edge points away from $V_0$, we reach $e$ at its tail. 
As $e$ is in the cut, 
%by our previous argument, 
our earlier observation provides that
$\mathbf{p}\in \mathcal{Q}_{T-e}$.
\end{proof}

\subsection{The quadratic shelling}
We give a second type of shelling order for $P_\cG$, that we call the quadratic shelling order. In this shelling, we will directly compare Jaeger trees of different facet graphs based on their tours.

Fix a ribbon structure and a basis $(b_0,b_0b_1)$. This time we will use this data for each facet graph. In fact, assuming that we did the same for the face-by-face shelling order, then the two orders would not be dramatically different. As we are about to see, each simplex of the dissection attaches to the previous ones along the same set of facets; in particular, the contribution to  the $h$-vector of each simplex is the same with respect to either order. 
%(Cf.\ Theorems \ref{thm:shelling_of_symm_edge_poly} and \ref{t:quadratic_shelling}.)

To facilitate the comparison of Jaeger trees of different facet graphs, in this section, when considering the tour of a tree $T$ of a facet graph $G$, we also keep track of the hidden edges. (See Example \ref{ex:<_4}.) In other words, %we can say that 
we consider the tour of $T$ in $\cG$.
When defining Jaeger trees, we will simply disregard the hidden edges (or in other words, we do not care about which endpoint of a hidden edge we reached first). This way, the definition of a Jaeger tree does not change.

%For each facet graph $G$ of $\cG$, take the set of Jaeger trees $\jaeg(G)$ for the fixed ribbon structure and basis. Let $\jaeg(\cG)=\{T\in\jaeg(G): G \text{ is a facet graph of }\cG\}$.

    \begin{figure}
    	\begin{center}    
    		\begin{tikzpicture}[-,>=stealth',auto,scale=0.6, thick]
    		\tikzstyle{o}=[circle,draw]
    		\node[o, label=left:$v_{0}$] (1) at (0, 0) {};
    		\node[o, label=left:$v_{3}$] (2) at (-1, 2) {};
    		\node[o, label=right:$v_{1}$] (3) at (1, 2) {};
    		\node[o, label=right:$v_{2}$] (4) at (0, 4) {};
    		\path[->, line width=0.2mm]
    		(2) edge node [above] {} (1);
    		\path[->, line width=0.65mm,every node/.style={font=\sffamily\small}]
    		(3) edge node [below] {} (1)
    		(4) edge node [above] {} (2)
    		(4) edge node [below] {} (3);
    		\path[dotted,every node/.style={font=\sffamily\small}]
    		(2) edge node [below] {} (3);
    		
	        \draw [fill=gray,color=gray] (0.6,-0.3) circle [radius=.05];
	        \draw [->,color=gray] (0.6,-0.3) -- (0.9, 0.4);
    		\end{tikzpicture}
    		\begin{tikzpicture}[-,>=stealth',auto,scale=0.6, thick]
    		\tikzstyle{o}=[circle,draw]
    		\node[o, label=left:$v_{0}$] (1) at (0, 0) {};
    		\node[o, label=left:$v_{3}$] (2) at (-1, 2) {};
    		\node[o, label=right:$v_{1}$] (3) at (1, 2) {};
    		\node[o, label=right:$v_{2}$] (4) at (0, 4) {};
    		\path[->,line width=0.65mm,every node/.style={font=\sffamily\small}]
    		(2) edge node [above] {} (1)
    		(2) edge node [below] {} (3)
    		(4) edge node [below] {} (3);
    		\path[dotted,every node/.style={font=\sffamily\small}]
    		(4) edge node [above] {} (2)
    		(3) edge node [below] {} (1);
    		\draw [fill=gray,color=gray] (0.6,-0.3) circle [radius=.05];
	        \draw [->,color=gray] (0.6,-0.3) -- (0.9, 0.4);
    		\end{tikzpicture}
    		\begin{tikzpicture}[-,>=stealth',auto,scale=0.6, thick]
    		\tikzstyle{o}=[circle,draw]
    		\node[o, label=left:$v_{0}$] (1) at (0, 0) {};
    		\node[o, label=left:$v_{3}$] (2) at (-1, 2) {};
    		\node[o, label=right:$v_{1}$] (3) at (1, 2) {};
    		\node[o, label=right:$v_{2}$] (4) at (0, 4) {};
    		\path[->,line width=0.2mm]
    		(1) edge node [below] {} (3);
    		\path[->,line width=0.65mm,every node/.style={font=\sffamily\small}]
    		(1) edge node [above] {} (2)
    		(2) edge node [above] {} (4)
    		(3) edge node [below] {} (4);
    		\path[dotted]
    		(2) edge node [below] {} (3);
    		\draw [fill=gray,color=gray] (0.6,-0.3) circle [radius=.05];
	        \draw [->,color=gray] (0.6,-0.3) -- (0.9, 0.4);
    		\end{tikzpicture}
    		\begin{tikzpicture}[-,>=stealth',auto,scale=0.6, thick]
    		\tikzstyle{o}=[circle,draw]
    		\node[o, label=left:$v_{0}$] (1) at (0, 0) {};
    		\node[o, label=left:$v_{3}$] (2) at (-1, 2) {};
    		\node[o, label=right:$v_{1}$] (3) at (1, 2) {};
    		\node[o, label=right:$v_{2}$] (4) at (0, 4) {};
    		\path[->,line width=0.2mm]
    		(1) edge node [above] {} (2);
    		\path[->,line width=0.65mm]
    		(1) edge node [below] {} (3)
    		(2) edge node [above] {} (4)
    		(3) edge node [below] {} (4);
    		\path[dotted]
    		(2) edge node [below] {} (3);
    		\draw [fill=gray,color=gray] (0.6,-0.3) circle [radius=.05];
	        \draw [->,color=gray] (0.6,-0.3) -- (0.9, 0.4);
    		\end{tikzpicture}
    	\end{center}
    	\caption{Jaeger trees in various facet graphs of a graph. The tree edges are thick. Hidden edges are dotted.
    	%The ribbon structure is taken to be the one induced by the positive orientation of the plane. 
    	The basis is $(v_0,v_0v_1)$, which is indicated by the little gray arrow.}
    	%Left panel: A semi-balanced complete bipartite graph. We consider the ribbon structure induced by the positive orientation of the plane, with basis $(x_{0,0},x_{0,0}x_{1,2})$.
    	%	Right panel: A non-crossing comb-tree of the graph on the left, with $U_1=\{x_{1,1}\}$ and $D_1=\{x_{1,2}\}$.}
    	\label{fig:<_4_Jaeger_trees}
    \end{figure}
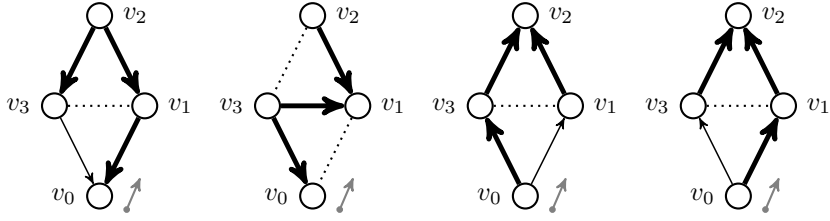 

%In this section, we will compare Jaeger trees of different facet graphs, hence for any Jaeger tree $T$ let us consider the tour of $T$ in $\cG$. That is, in the sequence of node-edge pairs, we also include the hidden edges. 

For any two %Jaeger 
spanning trees $T$, $T'$
%\in \jaeg(\cG)$, 
of a ribbon graph,
we say that the tours of $T$ and $T'$ agree until some point if up to that moment the current node-edge pairs of the two tours agree, moreover the status of each current edge has been the same up to this point, meaning that it was either hidden with respect to both trees, or was present in both and oriented in the same way. 
%differ if the current edge is either oriented differently for the two graphs, or the current edge is included into one tree, but not in the other.

In this subsection we order the Jaeger trees in $\jaeg(\cG)$ the following way:

\begin{defn}[quadratic ordering of Jaeger trees, $<_4$]
Let $T_1,T_2,T_3,T_4\in\jaeg(\cG)$. If their tours agree until the node-edge pair $(u,uv)$ becomes current, but at that point $\overrightarrow{vu}\in T_1$, the edge $uv$ is not present in the facet graph of $T_2$, the oriented edge $\overrightarrow{uv}$ is in the facet graph of $T_3$ but $\overrightarrow{uv}\notin T_3$, finally $\overrightarrow{uv}\in T_4$, then we put $T_1 <_4 T_2 <_4 T_3 <_4 T_4$. 
\end{defn}

Note that as we look at first differences of Jaeger trees, it is not possible to have $\overrightarrow{vu}\notin T$ if $\overrightarrow{vu}$ is in the facet graph of $T$. Indeed, by the Jaeger property, in this case $(v,vu)$ would have to be current earlier than $(u,uv)$ but then that would have been an earlier difference.

\begin{ex}\label{ex:<_4}
Figure \ref{fig:<_4_Jaeger_trees} shows four Jaeger trees in various facet graphs, with ribbon structure induced by the positive orientation of the plane, and basis $(v_0,v_0v_1)$.
For the tree of the leftmost panel, its tour (considered in $\cG$) is $(v_0,v_0v_1)$, $(v_1,v_1v_2)$, $(v_2,v_2v_3)$, $(v_3,v_3v_0)$, $(v_3,v_3v_1)$, $(v_3,v_3v_2)$, $(v_2,v_2v_1)$, $(v_1,v_1v_3)$, $(v_1,v_1v_0)$, $(v_0,v_0v_3)$. 
The quadratic ordering $<_4$ arranges these Jaeger trees in increasing order from left to right. Indeed, the first difference between the tours of any two of the trees is at the node-edge pair $(v_0,v_0v_1)$, which is a head-edge for the leftmost tree, a hidden edge for the second one, a nonedge seen from its tail for the third one, and a tail edge for the rightmost one.
\end{ex}

In preparation to our main claim about the quadratic order, we need the following two statements.

%{\color{red} Szerintem olvasszuk \H oket \"ossze!}

\begin{lemma}\label{l:edge_in_or_out_in_two_facets}
	If there are two facet graphs $G$ and $G'$ such that an edge $xy$ is present in $G$ oriented as $\overrightarrow{xy}$ and it is not present in $G'$, then there is a cut $(V_0,V_1)$ such that $x\in V_0$, $y\in V_1$, moreover, in $G$ each edge between $V_0$ and $V_1$ is either not present or oriented from $V_0$ to $V_1$, and in $G'$ each edge between $V_0$ and $V_1$ is either not present or oriented from $V_1$ to $V_0$.
\end{lemma}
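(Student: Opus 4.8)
I would prove Lemma~\ref{l:edge_in_or_out_in_two_facets} by comparing the layerings of $G$ and $G'$, exactly in the spirit of the proof of Lemma~\ref{l:nonfirst_orientation_exits_dircut}. Let $l$ be a layering for $G$ and $l'$ a layering for $G'$; since each is only determined up to an additive constant, I am free to shift them. The idea is to track the difference $l(v)-l'(v)$ across the vertex set and read off the desired cut from a level set of this difference, using the hypothesis that $xy$ behaves differently in the two facet graphs.

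\medskip

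\textbf{Key steps, in order.} First, normalize: shift $l'$ so that $l(v)\ge l'(v)$ for all $v\in V$ with equality attained somewhere, so that $l(v)-l'(v)$ is a nonnegative integer on $V$; set $U(k)=\{v : l(v)-l'(v)=k\}$. Second, observe that for any edge $uv\in\cG$ we have $|l(u)-l(v)|\le 1$ and $|l'(u)-l'(v)|\le 1$, hence
\[
\bigl|(l(v)-l'(v))-(l(u)-l'(u))\bigr|\le 2,
\]
so every edge of $\cG$ joins vertices within a single $U(k)$, or between $U(k)$ and $U(k+1)$, or between $U(k)$ and $U(k+2)$. Third, for any threshold $m$ consider the partition $(A_m,B_m)$ with $A_m=U(0)\sqcup\cdots\sqcup U(m)$ and $B_m=U(m+1)\sqcup U(m+2)\sqcup\cdots$; I claim each such partition gives a cut of the required directional type, namely every edge crossing it is (in $G$) either absent or oriented from $A_m$ to $B_m$, and (in $G'$) either absent or oriented from $B_m$ to $A_m$. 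For the ``$\ge 2$ apart'' crossing edges this is automatic from the computation above combined with $|l|,|l'|\le 1$; for the ``exactly $1$ apart'' crossing edges, such an edge is present in at most one of $G,G'$ (it is present in $G$ iff $|l(u)-l(v)|=1$, equivalently $|l'(u)-l'(v)|=0$, and vice versa for $G'$), and in whichever graph it is present the orientation is forced by the sign of the layer difference, which points the right way. Fourth, use the hypothesis: $xy$ is present in $G$ as $\overrightarrow{xy}$ (so $l(y)-l(x)=1$) but absent in $G'$ (so $l'(x)=l'(y)$), hence $(l(y)-l'(y))-(l(x)-l'(x))=1$, meaning $x\in U(k)$ and $y\in U(k+1)$ for some $k$. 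Taking $m=k$ gives a partition $(V_0,V_1)=(A_k,B_k)$ with $x\in V_0$, $y\in V_1$, of exactly the required type. This is the cut claimed in the lemma.

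\medskip

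\textbf{Main obstacle.} The one place that needs care is the case analysis for edges that cross the cut with layer-difference exactly $1$: one must check that such an edge cannot be present in both $G$ and $G'$ at once, and that whenever it is present the orientation is the one asserted. This is the same dichotomy that powers Lemma~\ref{l:nonfirst_orientation_exits_dircut} (there an edge between $U(k)$ and $U(k+1)$ was ``not in $G_j$ and points toward $U(k)$ in $G_i$, or points toward $U(k+1)$ in $G_j$ and is not present in $G_i$''), so the argument is routine once the setup is right — the real content is just choosing the normalization of $l,l'$ and the threshold $m$ so that the specific edge $xy$ lands on the boundary. Everything else is bookkeeping on integer layer values, with no geometry of the polytope entering.
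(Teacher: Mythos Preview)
Your proposal is correct and follows essentially the same approach as the paper: both compare the two layerings via the level sets $U(k)$ of their difference and take the cut at the threshold separating $x$ from $y$, exactly as in Lemma~\ref{l:nonfirst_orientation_exits_dircut}. The only cosmetic difference is your normalization (shifting so that $l-l'\ge 0$ everywhere) versus the paper's (aligning at $y$); your version in fact labels $V_0,V_1$ consistently with the lemma statement, whereas the paper's proof has them swapped.
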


\begin{proof}
	Let $l$ be the layering of $G$ and $l'$ be the layering of $G'$, and shift them so that $l(y)=l'(y)$. Notice that in this case, $l'(x)-l(x)=1$. Just like in the proof of Lemma \ref{l:nonfirst_orientation_exits_dircut}, for $i\in\mathbb{Z}$ let $U(i)=\{v\in V\mid l'(v)-l(v)=i\}$. Take $V_0=\bigcup_{i\leq 0}U(i)$ and $V_1=\bigcup_{i\geq 1}U(i)$. As in the earlier proof, $(V_0,V_1)$ will be a cut, where in $G$ each edge between $V_0$ and $V_1$ is either not present or oriented from $V_0$ to $V_1$, and in $G'$ each edge between $V_0$ and $V_1$ is either not present or oriented from $V_1$ to $V_0$.
\end{proof}

\begin{lemma}\label{l:edge_ordered_differently_in_two_facets}
	If there are two facet graphs $G$ and $G'$ such that an edge $xy$ is oriented as $\overrightarrow{xy}$ in $G$ and it is oriented as $\overrightarrow{yx}$ in $G'$, then there is a cut $(V_0,V_1)$ such that $x\in V_0$, $y\in V_1$, moreover, in $G$ each edge between $V_0$ and $V_1$ is either not present or oriented from $V_0$ to $V_1$, and in $G'$ each edge between $V_0$ and $V_1$ is either not present or oriented from $V_1$ to $V_0$. 
\end{lemma}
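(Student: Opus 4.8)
The plan is to mimic the proof of Lemma \ref{l:edge_in_or_out_in_two_facets} almost verbatim, adjusting only the shift of the two layerings. Let $l$ be the layering of $G$ and $l'$ the layering of $G'$. Since $xy$ is oriented as $\overrightarrow{xy}$ in $G$ we have $l(y)-l(x)=1$, and since it is oriented as $\overrightarrow{yx}$ in $G'$ we have $l'(x)-l'(y)=1$. First I would shift $l'$ (adding a suitable constant, which does not change the facet graph) so that $l'(x)=l(x)$; then necessarily $l'(y)=l'(x)-1=l(x)-1=l(y)-2$, i.e.\ $l'(y)-l(y)=-2$.

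Next, exactly as before, for $i\in\mathbb{Z}$ set $U(i)=\{v\in V\mid l'(v)-l(v)=i\}$. The key arithmetic observation, identical to the one in Lemmas \ref{l:nonfirst_orientation_exits_dircut} and \ref{l:edge_in_or_out_in_two_facets}, is that for any edge $uv\in\cG$ the quantity $(l'(v)-l(v))-(l'(u)-l(u))$ has absolute value at most $2$, so every edge of $\cG$ lies within a single level set $U(i)$, or between consecutive level sets $U(i),U(i+1)$, or between $U(i)$ and $U(i+2)$; moreover an edge between $U(i)$ and $U(i+2)$ points toward $U(i+2)$ in $G$ and toward $U(i)$ in $G'$, while an edge between $U(i)$ and $U(i+1)$ is either absent in $G$ and points toward $U(i)$ in $G'$, or points toward $U(i+1)$ in $G$ and is absent in $G'$. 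Now take $V_0=\bigcup_{i\le 0}U(i)$ and $V_1=\bigcup_{i\ge 1}U(i)$. By the preceding case analysis, every edge of $\cG$ between $V_0$ and $V_1$ is either not present in $G$ or oriented from $V_0$ to $V_1$ in $G$, and either not present in $G'$ or oriented from $V_1$ to $V_0$ in $G'$. Since $x\in U(0)\subseteq V_0$ and $y\in U(-2)\subseteq V_0$ — wait, that is wrong.

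I should instead shift so that the cut actually separates $x$ from $y$: shift $l'$ so that $l'(y)=l(y)$, giving $l'(x)-l(x)=l'(x)-l(x)$; from $l'(x)-l'(y)=1$ and $l(y)-l(x)=1$ we get $l'(x)=l'(y)+1=l(y)+1=l(x)+2$, so $l'(x)-l(x)=2$ and $l'(y)-l(y)=0$. Thus with $U(i)$ as above, $y\in U(0)$ and $x\in U(2)$. Taking $V_0=\bigcup_{i\ge 1}U(i)$ (so $x\in V_0$) and $V_1=\bigcup_{i\le 0}U(i)$ (so $y\in V_1$), the same case analysis shows that every edge between $V_0$ and $V_1$ is either absent in $G$ or oriented from $V_0$ to $V_1$ in $G$, and either absent in $G'$ or oriented from $V_1$ to $V_0$ in $G'$, which is exactly the claim. (Concretely: a $U(1)$–$U(0)$ edge is either oriented toward $U(1)\subseteq V_0$ in $G$ and absent in $G'$, or absent in $G$ and oriented toward $U(0)\subseteq V_1$ in $G'$; a $U(2)$–$U(0)$ edge points toward $U(2)\subseteq V_0$ in $G$ and toward $U(0)\subseteq V_1$ in $G'$. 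There are no edges straddling the partition other than these types.)

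The only genuinely nontrivial ingredient is the ``difference at most $2$'' estimate together with the observation that it forces all cross-partition edges into the two admissible types; but this is already contained in the earlier lemmas and is routine. So I do not anticipate any real obstacle here — the lemma is a direct analogue of Lemma \ref{l:edge_in_or_out_in_two_facets}, and the proof is a two-line modification of that one, with the bookkeeping of which side of the cut contains $x$ and $y$ being the only thing to get right.
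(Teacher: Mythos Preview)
Your approach is exactly the paper's: shift so that $l'(y)=l(y)$, define $U(i)=\{v\mid l'(v)-l(v)=i\}$, and cut between $\{i\le 0\}$ and $\{i\ge 1\}$. The only cosmetic difference is that you relabel the sides so that $x\in V_0$ and $y\in V_1$ directly, whereas the paper's labelling comes out swapped relative to the statement.

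There is, however, a bookkeeping slip in your case analysis that makes the write-up internally inconsistent. With $U(i)=\{v\mid l'(v)-l(v)=i\}$, an edge between $U(i)$ and $U(i+2)$ points toward $U(i)$ in $G$ (since $l(v)-l(u)=-1$ forces $\overrightarrow{vu}$) and toward $U(i+2)$ in $G'$, not the other way around as you wrote. Likewise, a $U(i)$--$U(i+1)$ edge is either absent in $G'$ and points toward $U(i)$ in $G$, or absent in $G$ and points toward $U(i+1)$ in $G'$. Your parenthetical check (``a $U(1)$--$U(0)$ edge is either oriented toward $U(1)\subseteq V_0$ in $G$\ldots'') therefore contradicts your own main claim that in $G$ edges go from $V_0$ to $V_1$. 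With the roles of $G$ and $G'$ corrected in the analysis, your choice $V_0=\bigcup_{i\ge 1}U(i)$, $V_1=\bigcup_{i\le 0}U(i)$ does give exactly the desired conclusion: in $G$ edges go from $V_0$ (higher levels) to $V_1$ (lower levels), and in $G'$ the reverse. So the idea is fine; just swap $G$ and $G'$ in the level-set discussion.
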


\begin{proof}
	Let $l$ be the layering of $G$ and $l'$ be the layering of $G'$, and shift them so that $l(y)=l'(y)$. Notice that in this case, $l'(x)-l(x)=2$. Once again, for $i\in\mathbb{Z}$ let $U(i)=\{v\in V\mid l'(v)-l(v)=i\}$. Take $V_0=\bigcup_{i\leq 0}U(i)$ and $V_1=\bigcup_{i\geq 1}U(i)$. As in the previous proof, $(V_0,V_1)$ will be a cut, where in $G$ each edge between $V_0$ and $V_1$ is either not present or oriented from $V_0$ to $V_1$, and in $G'$ each edge between $V_0$ and $V_1$ is either not present or oriented from $V_1$ to $V_0$.
\end{proof}

\begin{thm}\label{t:quadratic_shelling}
For any connected graph $\cG$, the quadratic order $<_4$ is a shelling order on the set of simplices corresponding to the collection of Jaeger trees $\mathcal{J}(\cG)$. 
%form a shellable dissection of the boundary of $P_\cG$, with shelling order $<_4$.
	%The ordering $<_4$ induces a shelling order of $P_\cG$, and the terms of the $h$-vector are $$h_i=|\{T\in\jaeg(\cG): T  \text{ has exactly $i$ tail-edges }\}|.$$
\end{thm}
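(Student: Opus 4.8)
The plan is to mirror the structure of the proof of Theorem \ref{thm:shelling_of_symm_edge_poly}, replacing appeals to Lemmas \ref{l:reversing_cut_pointing_away} and \ref{l:nonfirst_orientation_exits_dircut} with appeals to Lemmas \ref{l:edge_in_or_out_in_two_facets} and \ref{l:edge_ordered_differently_in_two_facets}, and checking that the resulting cut arguments are compatible with the quadratic order $<_4$. First I would fix a Jaeger tree $T\in\jaeg(\cG)$ that is not the $<_4$-minimal element, say $T$ lies in the facet graph $G$, and show that $\mathcal Q_T\cap\left(\bigcup_{T'<_4 T}\mathcal Q_{T'}\right)$ is exactly the union of the facets $\mathcal Q_{T-e}$ as $e$ runs over the tail-edges of $T$. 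As before this breaks into three claims: (a) for each tail-edge $e$ of $T$, the facet $\mathcal Q_{T-e}$ is contained in $\bigcup_{T'<_4 T}\mathcal Q_{T'}$; (b) if $T$ is not $<_4$-minimal then $\mathcal Q_T$ actually meets $\bigcup_{T'<_4 T}\mathcal Q_{T'}$; and (c) every point of that intersection lies in some $\mathcal Q_{T-e}$ with $e$ a tail-edge.

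For claim (a), fix a tail-edge $e\in T$. If the fundamental cut $C^*_G(T,e)$ is undirected, Theorem \ref{thm:shelling_for_semibalanced} applied inside $G$ already gives a Jaeger tree $T'\in\jaeg(G)$ with $T'\prec T$ and $\mathcal Q_{T-e}\subseteq\mathcal Q_{T'}$; I must then check that $T'\prec T$ inside $G$ implies $T'<_4 T$. This is immediate from the remark right after Definition \ref{def:tour_of_a_tree} together with the observation that, for trees in the same facet graph, the tour in $\cG$ (tracking hidden edges) refines the tour in $G$ in a status-preserving way, so the first genuine difference of the two $\cG$-tours is the same node-edge pair where they differ in $G$; there $e$ is seen at its tail and is in $T$ but not $T'$, i.e. the $<_4$-clause with $T'=T_3$ (or $T_2$) and $T=T_4$. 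If instead $C^*_G(T,e)$ is a directed cut, it points away from the side containing $b_0$ (since $e$ is a tail-edge). Write $e=\overrightarrow{xy}$ with $x,y$ on the $b_0$-side and the other side respectively; the cut $C^*_G(T,e)$ has all edges from the $b_0$-side to the far side. Now I want a facet graph $G'$ in which the edges outside this cut are unchanged but the cut edges are reversed (or disappear). This is precisely the content I would extract by applying Lemma \ref{l:edge_in_or_out_in_two_facets} or \ref{l:edge_ordered_differently_in_two_facets} to $e$ — but the cleanest route is to invoke Lemma \ref{l:reversing_cut_pointing_away} directly (it is stated earlier and gives exactly such a $G'$ with $f(G')>f(G)$, hence in particular $G'\ne G$ and edges outside the cut preserved). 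Then $\mathcal Q_{T-e}\subseteq \mathcal Q_G\cap\mathcal Q_{G'}$, and since every tree edge of $T$ other than $e$ still sits in $G'$ with the same orientation, $T-e$ spans a simplex of $\mathcal Q_{G'}$; dissecting $\mathcal Q_{G'}$ by its Jaeger trees shows $\mathcal Q_{T-e}\subseteq\bigcup\{\mathcal Q_{T'}\mid T'\in\jaeg(G')\}$. Finally I must verify every such $T'$ satisfies $T'<_4 T$: since $e$ is reached at its tail in the $\cG$-tour of $T$ (it is a tail-edge), and in the $\cG$-tours of $T$ and of any $T'\in\jaeg(G')$ the first disagreement happens no later than the first cut-edge encountered — which in $G$ points out of $b_0$'s side (tail reached first) while in $G'$ it points in or is absent — the $<_4$-clause puts $T'$ strictly before $T$. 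I would isolate this ``first cut-edge is reached at its tail'' argument as a small lemma since it recurs.

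For claim (b): if $T$ is not the $<_4$-minimal Jaeger tree of its own facet graph $G$, Theorem \ref{thm:shelling_for_semibalanced} already gives nonempty intersection with earlier simplices of $G$, and $\prec$-smaller trees of $G$ are $<_4$-smaller as in (a). If $T$ is $\prec$-minimal in $G$ but $G$ is not the first facet (equivalently $T$ is not globally $<_4$-minimal), then by Lemma \ref{l:nonfirst_orientation_exits_dircut} (with the same $b_0$) there is a directed cut in $G$ pointing away from $b_0$'s side; the first of its edges met in the tour of $T$ is reached at its tail, so $T$ has a tail-edge and claim (a) already exhibits a point of $\mathcal Q_{T-e}$ in an earlier simplex. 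For claim (c): take $\mathbf p\in\mathcal Q_T\cap\mathcal Q_{T'}$ with $T'<_4 T$, $T'\in\jaeg(G')$. If $G'=G$ we are done by Theorem \ref{thm:shelling_for_semibalanced} inside $G$. If $G'\ne G$, I compare their layerings as in the proofs of Lemmas \ref{l:edge_in_or_out_in_two_facets}–\ref{l:edge_ordered_differently_in_two_facets}: there is a cut $(V_0,V_1)$, all of whose edges (present in either graph) point one way in $G$ and the opposite way in $G'$; since $\mathbf p$ is simultaneously a convex combination of edge-vectors of $T\subseteq G$ and of $T'\subseteq G'$ and the cut-vectors are, respectively, ``positive'' and ``negative'' on a separating functional (the difference of layerings), every cut edge has coefficient $0$ in $\mathbf p$. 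Then letting $e$ be the first $T$-edge from this cut met in the tour of $T$: by the ``first cut-edge reached at its tail'' lemma $e$ is a tail-edge, and coefficient-zero of $e$ forces $\mathbf p\in\mathcal Q_{T-e}$. One subtlety to handle carefully is that the separating functional must be chosen so that $b_0$ lies on the correct side — but here, unlike in the face-by-face proof, we do not need $b_0\in V_0$: the ``first edge met in the tour'' argument produces a tail-edge as long as all cut-edges in $G$ are consistently oriented, because the tour starts at $b_0$ and, crossing the cut for the first time, must go from the $b_0$-side, and that direction is the ``tail'' direction precisely when... — and this is the place I expect to do real work: pinning down, for a consistently-oriented cut in $G$ with $b_0$ possibly on either side, whether the first traversal crossing is at a tail or a head, and arranging the layering comparison (the roles of $l,l'$, i.e. which graph is $G$ and which is $G'$) so that it is always a tail. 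The cleanest fix is to choose, among the partitions $(V_0,V_1)$ produced by the layering comparison, the one with $b_0\in V_0$ — always possible since, as in Lemma \ref{l:nonfirst_orientation_exits_dircut}, $b_0\notin U(0)$ cannot fail here — and orient so that $G$'s cut edges leave $V_0$; then ``first traversal crossing is at its tail'' is exactly the earlier observation. That, together with $T'<_4 T$ forcing $G,G'$ to sit on the correct sides of this comparison, completes (c). Assembling (a)–(c) and invoking Proposition \ref{prop:h-dissect} gives the theorem; I would also remark that the per-simplex facet-count, hence the $h$-vector, coincides with that of Theorem \ref{thm:shelling_of_symm_edge_poly}, since in both cases a simplex $\mathcal Q_T$ attaches exactly along $\{\mathcal Q_{T-e}: e\text{ a tail-edge of }T\}$.
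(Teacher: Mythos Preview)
Your outline has the right three-part structure, but claim (a) in the directed-cut case contains a genuine gap. You assert that \emph{every} $T'\in\jaeg(G')$ satisfies $T'<_4 T$, on the grounds that ``the first disagreement happens no later than the first cut-edge encountered.'' This is not true: a Jaeger tree $T'$ of $G'$ can differ from $T$ already inside $V_0$ (where $G$ and $G'$ coincide), for instance by including a tail-edge $e'\in G[V_0]$ that $T$ skips; the first difference is then at $e'$, seen at its tail, with $e'\in T'$ and $e'\notin T$, which by the definition of $<_4$ gives $T<_4 T'$. So covering $\mathcal Q_{T-e}$ by \emph{all} of $\jaeg(G')$ does not place it among earlier simplices. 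The paper avoids this by constructing a \emph{specific} family $\mathcal T=\{T_0\cup\overleftarrow{uv}\cup T'_1 : T'_1\in\jaeg(G'[V_1])\}\subseteq\jaeg(G')$, where $T_0$ is the $V_0$-component of $T-e$ and $\overleftarrow{uv}$ is a carefully chosen cut edge of $G'$. Because $T_0$ is kept fixed, the tours of $T$ and of any member of $\mathcal T$ genuinely agree until the first cut edge, and one checks each tree of $\mathcal T$ precedes $T$ in $<_4$; the union of their simplices covers $\mathcal Q_{T-e}$ because the $T'_1$ dissect $\mathcal Q_{G'[V_1]}\supseteq\mathcal Q_{T_1}$.

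Your treatments of (b) and (c) also lean on the $f$-valuation (via Lemma~\ref{l:nonfirst_orientation_exits_dircut} and the ``$b_0\notin U(0)$'' step), which is not part of the $<_4$ setup; in particular, from $T'<_4 T$ alone you do not know $f(G')>f(G)$, so you cannot choose the side of $b_0$ that way. The paper handles both parts uniformly by looking at the actual first difference of the two tours: that edge has different status in $G$ and $G'$, hence lies in the cut $(V_0,V_1)$ produced by Lemma~\ref{l:edge_in_or_out_in_two_facets} or~\ref{l:edge_ordered_differently_in_two_facets}, and since it is the first cut edge reached starting from $b_0$, this determines which side $b_0$ is on; a case analysis on the three possible shapes of the first difference then yields the required tail-edge of $T$. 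In short, for the quadratic order the comparison with an earlier tree must be done through the first tour-difference rather than through $f$, and in (a) you must restrict to Jaeger trees of $G'$ that share $T_0$ on $V_0$.
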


As before, the terms of the resulting $h$-vector are $$h_i=|\{T\in\jaeg(\cG)\mid T  \text{ has exactly $i$ tail-edges}\}|.$$
The coincidence here with the formula of Theorem \ref{thm:shelling_of_symm_edge_poly} comes as no surprise in light of Proposition \ref{prop:h-dissect} and Corollary \ref{cor:h^*_of_sym_edge_poly}. Recall though that in this subsection $\mathcal J(\cG)$ is defined by using (restrictions of) the same ribbon structure, whereas in the case of Theorem \ref{thm:shelling_of_symm_edge_poly}, the definition was more flexible.

\begin{proof}
	%Once again we show that we get a shelling of the boundary.
	
	The outline of the proof is the same as that of the proof of Theorem \ref{thm:shelling_of_symm_edge_poly}. First we show that if $e\in T$ is a tail-edge then $\mathcal{Q}_{T-e}\subseteq \bigcup_{T'<_4 T}\mathcal{Q}_{T'}$. %Then we show that if $e\in T$ is reached at its head in the tour of $T$, then $\mathcal{Q}_{T-e}$ is interior disjoint from $\bigcup_{T'<T}\mathcal{Q}_{T'}$. 
	Then we show that if $T$ is not the first tree in $<_4$, then $\mathcal{Q}_T$ is not disjoint from $\bigcup_{T'<_4 T}\mathcal{Q}_{T'}$, moreover, if $\mathbf x\in \mathcal{Q}_T\cap (\bigcup_{T'<_4 T}\mathcal{Q}_{T'})$ then $\mathbf x\in \mathcal{Q}_{T-e}$ for some tail-edge $e\in T$.
	
	First, let us notice that if $T,T'\in \jaeg(G)$ for some facet graph $G$, then $T$ and $T'$ are ordered the same way in $<_4$ as in the ordering of Jaeger trees of $G$. Indeed, as they live in the same directed subgraph, the first difference between their tours must be that one of them contains an edge reached at its tail, while the other one does not. In this case, they are ordered the same way as for the ordering $\prec$ of $\jaeg(G)$.
	
	Let $T\in\jaeg(G)$.
	Suppose that $e\in T$ is a tail-edge and $C^*(T,e)$ is not directed. Then by Theorem \ref{thm:shelling_for_semibalanced}, $\mathcal{Q}_{T-e}\subseteq \bigcup_{ T'\in\jaeg(G), T' \prec T}\mathcal{Q}_{T'}\subseteq \bigcup_{T'<_4 T}\mathcal{Q}_{T'}$.
	
	Now suppose that $e\in T$ is a tail-edge but $C^*(T,e)$ is a directed cut. We show a set of Jaeger trees all preceding $T$ in $<_4$ such that the union of their simplices contains $\mathcal{Q}_{T-e}$.  
	%Ez igaz, de nehéz pontosan hivatkozni és talán nem is olyan meglepő info.
	%This argument will resemble ??? from \cite{semibalanced}.
	Let the sides of the cut $C^*(T,e)$ be $V_0$ and $V_1$ with $b_0\in V_0$.
	
	Take the tour of $T$. As $T$ is a Jaeger tree, $e$ has its tail in $V_0$. Hence each edge of $C^*_G(T,e)$ has its tail in $V_0$. Thus, $e$ is the last edge of $C^*_G(T,e)$ to become current in the tour of $T$. %Let $\overrightarrow{uv}$ be the edge that first became current from $C^*(T,e)$.
	
	Take the facet graph $G'$ whose existence is proved in Lemma \ref{l:reversing_cut_pointing_away}. Notice that as in our case $C^*(T,e)$ is a fundamental cut, $G[V_1]$ is connected. Hence $G$ and $G'$ agree outside the cut $(V_0,V_1)$. If there is no edge in $\cG$ between $V_0$ and $V_1$ that is not present in $G$ then we obtained $G'$ from $G$ by reversing the cut $C^*_G(T,e)$. If there are edges of $\cG$ between $V_0$ and $V_1$ that are not present in $G$, then we get $G'$ from $G$ by removing the edges of $C^*_G(T,e)$ and adding the non-present edges of $\cG$ between $V_0$ and $V_1$ directed toward $V_0$.
	In the first case, let $uv$ be the edge of $C^*_G(T,e)$ that first becomes current in the tour of $T$. In the second case, let $uv$ be the first non-edge of $G$ between $V_0$ and $V_1$ that becomes current in the tour of $T$. In both cases, let $u\in V_0$ and $v\in V_1$. %{\color{red} Erről esetleg lehet ábra.}
	In both cases, $uv$ is in $G'$ with orientation $\overrightarrow{vu}$. Let $vw$ be the first edge of $G'$ following $vu$ in the ribbon structure of $G'$ at $v$ that is not in $G'(V_0,V_1)$. 
	%{\color{red} ezt esetleg at kellene nevezni}
	Let $T_0$ be the component of $T-e$ containing $b_0$, and let $T_1$ be the other component. %Let $G_1$ be the component of $G-C^*(T,e)$ not containing $b_0$ (and thus containing $v$). 
	Let $\jaeg(G'[V_1])$ denote the set of Jaeger trees of $G'[V_1]$ with base $(v,vw)$, where the ribbon structure is inherited from $G'$. Notice the following:
	$$
	\mathcal{T}:=\{T_0\cup \overleftarrow{uv} \cup T'_1: T'_1\in\jaeg(G'[V_1])\}\subseteq \jaeg(G').
	$$
	Firstly, these are all trees within $G'$. Now we show that they are Jaeger.
	Indeed, until reaching $\overleftarrow{uv}$, our walk agrees with the tour of $T$, hence we do not cut any edge at its head. We need to traverse $\overrightarrow{vu}$ as we reach it at its head. After traversing $\overrightarrow{vu}$, we arrive at $G'[V_1]=G[V_1]$. When we encounter an edge of $G'(V_0,V_1)$, we can cut it, since now the tails are in the side of $G'[V_1]$. Otherwise we traverse a Jaeger tree, hence within $V_1$ we also cut the non-tree edges at their tail. Arriving back to $T_0$ we finish the traversal as in the tour of $T$ with the exception that the edges of $G'(V_0,V_1)$ are already cut from their other side.
	
	Notice that the above described trees all precede $T$ in $<_4$, as the first difference is one of the followings: 
	In case 1, if $uv\neq e$, we traverse an edge from the head direction in $T'$ and remove the edge (from the tail direction) in $T$. In case 1, if $uv=e$, we traverse an edge from the head direction in $T'$ and traverse the edge from the tail direction in $T$. 
	In case 2 we traverse an edge from the head direction in $T'$ and it is not present in the facet of $T$. 
	
	As $\bigcup_{T'\in\jaeg(G'[V_1])} \mathcal{Q}_{T'}=Q_{G'[V_1]}=Q_{G[V_1]}\supseteq \mathcal{Q}_{T_1}$, we have $\mathcal{Q}_{T-e}\subseteq \bigcup_{T'\in\mathcal{T}} \mathcal{Q}_{T'}\subseteq \bigcup_{T'<_4 T} \mathcal{Q}_{T'}$.
	
	Now we show that if $T$ is not the first tree according to $<_4$, then $\mathcal{Q}_T$ is not disjoint from $\bigcup_{T'<_4 T}\mathcal{Q}_{T'}$. For this, it is enough to show that there is a tail-edge $e\in T$, as we already proved that in this case $\mathcal{Q}_{T-e}\subseteq \bigcup_{T'<_4 T}\mathcal{Q}_{T'}$. 
	Let $T'$ be any tree preceding $T$ in $<_4$. Let us take the first difference between $T'$ and $T$. If the first difference is an edge that is included into $T$ from the tail direction, then we are ready. Otherwise, since $T'<_4 T$, there are three possibilities :
	
	Case 1: An edge $e$ is included into $T'$ from the head direction, and $e$ is not present in $G$.	
	Case 2: An edge $e$ is included into $T'$ from the head direction, and $e$ is seen from its tail in the tour of $T$, but not included into $T$. 
	Case 3: We see an edge $e$ in the tour of $T'$ that is not present in $G'$, and $e$ is seen from its tail in the tour of $T$, but $e\notin T$.
	
	In cases 1 and 3, we can use Lemma \ref{l:edge_in_or_out_in_two_facets} to deduce that there is a cut $(V_0,V_1)$ such that $e$ is in the cut, moreover, in $G$ each edge of the cut is either not present or oriented from $V_0$ to $V_1$, and in $G'$ each edge of the cut is either not present or oriented from $V_1$ to $V_0$. In both cases we see that $T$ and $T'$ need to differ when we first reach an edge of the cut, since no edge of the cut occurs both in $G$ and $G'$ with the same orientation. Hence in both cases, $e$ is the edge that we first reach from the cut. Hence we conclude that in both cases, $b_0\in V_0$.
	As $T$ is a tree, it needs to contain an edge from the cut $G(V_0,V_1)$. However, as we see, each edge of $G(V_0,V_1)$ points from $V_0$ to $V_1$, hence the first edge of $T$ from the cut is necessarily reached from its tail. This finishes the proof for Cases 1 and 3.
	
	In case 2, we can repeat essentially the same argument using Lemma \ref{l:edge_ordered_differently_in_two_facets}.
	
	%But we have already seen in the proof of Theorem \ref{thm:shelling_of_symm_edge_poly} (in a slightly more general setting) that there is at most one tree in $\jaeg(\cG)$ where each edge is reached from its head. If there is such a tree, this will clearly be the first tree in $<_4$. Hence for all other trees, there must be an edge reached at its tail.
	
	Now we show that any point $\mathbf{p}\in \mathcal{Q}_T \cap (\bigcup_{T'<_4 T} \mathcal{Q}_{T'})$ is in $\mathcal{Q}_{T-e}$ for some tail-edge $e\in T$. This will finish the proof.
	Suppose that $\mathbf{p}\in \mathcal{Q}_T\cap \mathcal{Q}_{T'}$ where $T'<_4 T$. %Choose $T'$ to be the largest (with respect to $<_4$) Jaeger tree in $\jaeg(\cG)$ with this property.
	Let $G$ and $G'$ be the facet graphs of $T$ and $T'$, respectively. Write up $\mathbf{p}$ as a linear combination $\sum_{\overrightarrow{e}\in T\cap T'} \lambda_{\overrightarrow{e}} \mathbf{x}_{\overrightarrow{e}}$.
	Let $H\subset T \cap T'$ be the set of edges of $T$ that are taken with a positive coefficient in the above combination. It is enough to find an edge of $T-H$ that is reached at its tail in the tour of $T$, since in this case $\mathbf{p}\in_{T-e}$. 
	Examine the first difference between the tours of $T$ and $T'$.
	The edges of $H$ are part of $T'$, and with the same orientation as in $G$.
	Hence the first difference between $T$ and $T'$ is an edge outside of $H$. If the first difference is a tail-edge of $T$, then we are ready. If not, then there are three cases:
	
	Case 1: An edge $e$ is included into $T'$ from the head direction, and $e$ is not present in $G$.	
	Case 2: An edge $e$ is included into $T'$ from the head direction, and $e$ is seen from its tail in the tour of $T$, but not included into $T$. 
	Case 3: We see an edge $e$ in the tour of $T'$ that is not present in $G'$, and $e$ is seen from its tail in the tour of $T$, but $e\notin T$.
	
	In cases 1 and 3, we can use Lemma \ref{l:edge_in_or_out_in_two_facets} to deduce that there is a cut $(V_0,V_1)$ such that in $G$ each edge of the cut is either not present or oriented from $V_0$ to $V_1$, and in $G'$ each edge of the cut is either not present or oriented from $V_1$ to $V_0$. In both cases we see that $T$ and $T'$ need to differ when we first reach an edge of the cut, since no edge of the cut occurs both in $G$ and $G'$ with the same orientation. Hence in both cases, $e$ is the edge that we first reach from the cut. Hence we conclude that in both cases, $b_0\in V_0$. We also see that no edge of the cut is in $H$.
	
	As $T$ is a tree, it needs to contain an edge from the cut $G(V_0,V_1)$. However, as we see, each edge of $G(V_0,V_1)$ points from $V_0$ to $V_1$, hence the first edge of $T$ from the cut is necessarily reached from its tail. This finishes the proof for Cases 1 and 3.
	
	In case 2, we can repeat essentially the same argument using Lemma \ref{l:edge_ordered_differently_in_two_facets}.
\end{proof}

\section{An interpretation of $(\gamma_\cG)_1$}
\label{sec:gamma_1}

Ohsugi and Tsuchiya conjecture that the $h^*$-polynomial of the symmetric edge polytope is $\gamma$-positive \cite[Conjecture 4.11]{OT}. %This conjecture is a special case of Gal's conjecture stating that for a flag triangulation of a sphere, all the entries of the $\gamma$-vector are nonnegative \cite[Conjecture 2.1.7]{Gal}.
%The $\gamma$-positivity of the $h^*$-polynomial of the symmetric edge polytope 
This has been proved for some graph classes.
For some graph classes (cycles \cite{OT}, complete bipartite graphs \cite{arithm_symedgepoly}, complete graphs \cite{Root_poly_complete_graph}), the $h^*$-polynomial was explicitly computed. Also, Ohsugi and Tsuchiya proved that the $h^*$-polynomial of the symmetric edge polytope is $\gamma$-positive if $G$ has a vertex that is connected to every other vertex \cite[Theorem 5.3]{OT} or if $G$ is bipartite where each partite class contains a vertex that is connected to each vertex of the other partite class \cite[Corollary 5.5]{OT}.
%is preserved by some graph operations, which enabled them to prove $\gamma$-positivity for graphs obtainable by these operations from the above classes, see \cite[Section 5.3]{OT} (This includes wheel graphs and outerplanar graphs.)

In this section we prove that $\gamma_1$ is nonnegative for any graph, moreover, it equals twice the cyclomatic number. Independently from us, D'Al\`i et al.\ \cite{dalietal} found the same result with a simple proof. Moreover, they also establish the nonnegativity of $\gamma_2$.
(Note that $\gamma_0=1$ is trivially true for each graph, since the constant term of the $h^*$-vector is known to be 1.)

\begin{thm}\label{thm:gamma(1)=2g}
    For any simple undirected graph $\cG$, $$\gamma_1=2g,$$ where $g=|E(\cG)|-|V(\cG)|+1$.
\end{thm}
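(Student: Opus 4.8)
The strategy is to read off $\gamma_1$ from the $h^*$-coefficients, using the formula $(h^*_{P_\cG})_i = |\{T \in \jaeg(\cG) : T \text{ has exactly } i \text{ tail-edges}\}|$ from Theorem~\ref{cor:h^*_of_sym_edge_poly}. If $h^*(x) = \sum_{i=0}^{d} h^*_i x^i$ is palindromic of degree $d$, then writing $h^*(x) = (1+x)^d + \gamma_1 x(1+x)^{d-2} + (\text{higher } \gamma \text{ terms})$ and comparing coefficients of $x^1$ gives $\gamma_1 = h^*_1 - d$. Here $d = \dim P_\cG = |V(\cG)| - 1$ (since $P_\cG$ lies in the hyperplane where coordinates sum to zero, and it is full-dimensional there for a connected graph). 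So it suffices to prove
\[
h^*_1 = |\{T \in \jaeg(\cG) : T \text{ has exactly one tail-edge}\}| = 2g + |V(\cG)| - 1.
\]

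First I would analyze which Jaeger trees have exactly one tail-edge. Recall every spanning tree $T$ of a facet graph $G$ has $|V|-1$ edges, each either a tail-edge or a head-edge, and $T$ always has at least one tail-edge (the first tree-edge traversed from $b_0$ in the tour). A natural dichotomy: either the tree lives in the "first" facet graph $G_1$ — the one given by $l(v) = -\dist(b_0,v)$, described in the remark after Lemma~\ref{l:nonfirst_orientation_exits_dircut} — or it lives in some later facet graph. In $G_1$ every edge points toward $b_0$ along shortest paths, so the Jaeger trees there with exactly one tail-edge should be exactly the BFS-type trees, and I expect there to be precisely $|V(\cG)|-1$ of them... actually more care is needed: I would instead count via the shelling. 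Each $T$ with exactly one tail-edge whose fundamental cut of that tail-edge is \emph{not} directed contributes $r_i = 1$ within its own facet's shelling; each $T$ with exactly one tail-edge whose fundamental cut \emph{is} directed is the first Jaeger tree of a non-first facet, attached along one facet to an earlier facet $G_j$ via Lemma~\ref{l:reversing_cut_pointing_away}. The plan is to show these two families are counted by $2g$ and $|V(\cG)|-1$ respectively (or some such split summing to $2g + |V(\cG)| - 1$), by setting up a bijection with a well-understood set — the first family with pairs (hidden edge of $G$, orientation) after reversing a single directed cut, which is governed by the $g$ independent cycles; the second family with the non-first facet graphs that are "one cut away" from being first, combined with the BFS trees of $G_1$.

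The cleanest route, and the one I would actually pursue, is probably to exploit the shelling more directly: $h^*_1$ counts simplices $\tilde{\mathcal{Q}}_T$ attached to the earlier part along exactly one facet. Such a $T$ either is the first Jaeger tree of its facet graph $G$ (with $G \neq G_1$), or is a later Jaeger tree in $G$ attached along exactly one facet within $G$'s own sub-shelling. For the first type, I would argue via Lemmas~\ref{l:reversing_cut_pointing_away} and \ref{l:nonfirst_orientation_exits_dircut} that the relevant $G$'s are in bijection with... and count them; for the second type, use the description of Jaeger trees of a single semi-balanced digraph. I'd then invoke a dimension/Euler-characteristic bookkeeping, or directly the known fact (from the bipartite/interior-polynomial side, e.g. Ohsugi–Tsuchiya or the $h^*$ of cycles) to pin the constant, and finally assemble $h^*_1 - (|V(\cG)|-1) = 2g$.

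\textbf{Main obstacle.} The hard part will be the exact combinatorial count of Jaeger trees with exactly one tail-edge, especially separating the contribution coming from within-facet shelling steps (fundamental cut not directed) from the cross-facet steps (first Jaeger tree of a non-first facet), and proving the total is $2g + |V|-1$ rather than merely bounding it. I expect this requires a careful induction or an explicit bijection keyed to the cyclomatic number $g$ — matching the "$2g$" with the $g$ independent cycles, each contributing two orientations — and this is where most of the work (and the genuinely new argument beyond what's already in the excerpt) will lie. D'Alì et al.'s short proof via a global unimodular triangulation suggests that a slicker argument exists, but within the present framework the combinatorial census of single-tail-edge Jaeger trees seems unavoidable.
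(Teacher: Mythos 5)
The reduction you set up is correct and matches the paper's: with $d=\dim P_\cG=|V|-1$, palindromicity of $h^*_{P_\cG}$ and $\gamma_0=1$ give $\gamma_1=h^*_1-(|V|-1)$, and Theorem~\ref{cor:h^*_of_sym_edge_poly} converts $h^*_1$ into the count of Jaeger trees with exactly one tail-edge. So it does suffice to prove $h^*_1=2|E|-|V|+1$.

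However, the central count is not established, and you acknowledge this. The decomposition you propose --- split one-tail-edge Jaeger trees according to whether the unique tail-edge has a directed fundamental cut (equivalently, whether $T$ is the first Jaeger tree of its own facet graph) and try to match the two families against $2g$ and $|V|-1$ --- does not give the clean split you hope for. For example, the one-tail-edge Jaeger trees living in $G_1$ are exactly the $\overrightarrow e$-almost greedy trees for $\overrightarrow e\in G_1\setminus T_1$, and there are $|E(G_1)|-(|V|-1)$ of them, which is not $|V|-1$ in general (it is the cyclomatic number of the spanning subgraph $G_1$). The paper instead partitions by the \emph{oriented edge} $\overrightarrow e$ that is the unique tail-edge. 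It defines an $\overrightarrow e$-stick Jaeger tree and proves (Lemma~\ref{l:one-stick_trees}): for each of the $|V|-1$ oriented edges $\overrightarrow e\in T_1$ there is no $\overrightarrow e$-stick tree and exactly one $\overleftarrow e$-stick tree; for each of the remaining $g$ undirected edges $e\notin T_1$ there is exactly one $\overrightarrow e$-stick and one $\overleftarrow e$-stick tree. This yields $h^*_1=(|V|-1)+2g$ directly. The existence half of that lemma is the genuinely nontrivial content, carried out via two new constructions --- greedy trees (Definition~\ref{def:greedy_tree}, Lemma~\ref{l:greedy_tree_is_a_tree}) and $\overrightarrow{uv}$-almost greedy trees (Lemma~\ref{l:almost_greedy_tree}) --- together with explicit layering modifications (the sets $S_x^d$, $S_x^h$) that locate the facet graph in which the desired stick tree lives; the uniqueness half uses Lemma~\ref{l:one-stick_stronger} plus the directed-cut Lemma~\ref{l:nonfirst_orientation_exits_dircut}. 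None of this machinery is in your sketch, and without it the proof does not close; the ``pin the constant'' step you invoke would need exactly these lemmas.
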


Let us fix a ribbon structure and a basis for $\cG$, and let $\jaeg(\cG)$ be the union of the Jaeger trees of the facet graphs of $\cG$ for this ribbon structure and basis.
Let $\jaeg_1(\cG)$ denote the Jaeger trees of $\cG$ with exactly one tail-edge. By Corollary \ref{cor:h^*_of_sym_edge_poly}, $h^*_1=|\jaeg_1(\cG)|$. 
We know that $\gamma_0 = 1$ since $h^*_0=1$.
By \cite{OT}, the degree of $h^*_{P_\cG}$ is equal to $|V|-1$. (Note that this also follows from the fact that the last simplex of the shelling of the boundary will glue on all of its $|V|-1$ facets.)
Hence $h^*_1=\gamma_1+(|V|-1)\gamma_0$. Now for proving Theorem \ref{thm:gamma(1)=2g}, it is enough to show that $h^*_1 = |\jaeg_1(\cG)| =  2|E|-|V|+1$. 

We call a Jaeger tree $T$ an $\overrightarrow{e}$-stick Jaeger tree if $\overrightarrow{e}\in T$ is the unique tail-edge of $T$. (The name is motivated by the fact that in this case $\mathcal{Q}_T$ sticks to the previous simplices at the face $\mathcal{Q}_{T-e}$.) We will do a bit more than just counting Jaeger trees with exactly one tail-edge, we will also be able to tell what are these tail-edges.
Let $G_1$ again denote the facet graph of $\cG$ with maximal $f$-value. (That is, the facet graph defined by $l(v)=dist(b_0,v)$.)
Also, let $T_1$ be the first Jaeger tree of $G_1$ with respect to $\prec$ (which is also the first Jaeger tree in $\jaeg(\cG)$ with respect to both $<_f$ and $<_4$).
The following lemma is the heart of the proof of Theorem \ref{thm:gamma(1)=2g}. %It describes how we can get the elements of $\jaeg_1(\cG)$ by simple modifications of $T_1$. 
\begin{lemma}\label{l:one-stick_trees}
%Let $T_1$ be the greedy tree of the first facet graph $G_1$. 	
	For each $\overrightarrow{e}\in T_1$, there is no $\overrightarrow{e}$-stick Jaeger tree in $\jaeg(\cG)$, and there is exactly one $\overleftarrow{e}$-stick Jaeger tree in $\jaeg(\cG)$.
	
	For each edge $e$ of $\cG$ outside of $T_1$, there is exactly one $\overrightarrow{e}$-stick Jaeger tree and exactly one $\overleftarrow{e}$-stick Jaeger tree in $\jaeg(\cG)$.
\end{lemma}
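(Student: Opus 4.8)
The plan is to analyze, edge by edge, exactly which spanning trees can be $\overrightarrow{e}$-stick Jaeger trees, exploiting the rigidity of having a \emph{single} tail-edge. The starting observation is that if $T$ has exactly one tail-edge $\overrightarrow{e}$, then the entire structure of $T$ is forced away from that edge: for every other edge $\overrightarrow{th}\in T$, the base node $b_0$ lies in the same component of $T-\overrightarrow{th}$ as the head $h$. I would first translate this into a statement about the tour: all head-edges of $T$ are traversed "downward" away from $b_0$, so the tour of $T$ essentially behaves like the tour of $T_1$ until it is forced to cross $e$ in the "wrong" direction. In fact I expect that once $b_0$ and the basis are fixed, specifying that $\overrightarrow{e}$ is the unique tail-edge pins down $T$ uniquely as a subgraph \emph{and} pins down which facet graph it lives in — this is really the crux, and I would prove it by running the Bernardi tour and showing at each step that the Jaeger condition plus the one-tail-edge condition leaves no choice.

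Concretely I would split into the two cases of the statement. For $\overrightarrow{e}\in T_1$: since $T_1$ is the first Jaeger tree of $G_1$ and $G_1$ is the facet graph with layering $l(v)=\dist(b_0,v)$, every edge of $T_1$ points away from $b_0$, so each $\overrightarrow{e}\in T_1$ is already a tail-edge of $T_1$ with a \emph{directed} fundamental cut (the cut $C^*_{G_1}(T_1,e)$ separates $b_0$'s side, toward which — by the distance layering — no edge can point back). By Theorem~\ref{thm:shelling_for_semibalanced} such $e$ contributes $0$ to the $h$-vector within $G_1$, and I would argue that no \emph{other} facet graph can host an $\overrightarrow{e}$-stick tree either: an $\overrightarrow{e}$-stick tree $T$ would have its tour agree with that of $T_1$ up to the moment $e$ is first reached, and reaching $e$ at its tail with $\overrightarrow{e}\in T$ while $\overrightarrow{e}\in T_1$ means this is not a first difference at all, forcing $T=T_1$; but $T_1$ has no tail-edges at all (all its edges point away from $b_0$ but none of them, being in a directed fundamental cut, is an internally-active... wait) — more carefully, $T_1$ being the $\prec$-minimal tree means $T_1$ glues on zero facets, i.e.\ $T_1$ has no tail-edge whose fundamental cut is undirected, and here all fundamental cuts are directed, so $T_1$ is not $\overrightarrow{e}$-stick for any $e$; hence there is no $\overrightarrow{e}$-stick Jaeger tree for $\overrightarrow{e}\in T_1$.

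For the $\overleftarrow{e}$-stick trees (when $\overrightarrow{e}\in T_1$) and for both orientations of edges $e\notin T_1$: here I would construct the tree explicitly and argue uniqueness. Given the target tail-edge $\overrightarrow{a}$ (playing the role of $\overleftarrow{e}$ or one of the two orientations of $e\notin T_1$), I would use Lemma~\ref{l:reversing_cut_pointing_away} (or the facet-existence results of Lemmas~\ref{l:edge_in_or_out_in_two_facets}, \ref{l:edge_ordered_differently_in_two_facets}) to locate the unique facet graph $G$ in which $a$ sits as a tail-edge with directed fundamental cut pointing \emph{toward} $b_0$'s side, then take $T$ to be the tree built from $T_1$ (or $T_1[V_0]$) on the $b_0$-side, the edge $a$, and the first Jaeger tree of the appropriate induced subgraph on the far side — mimicking the construction $\mathcal T$ in the proof of Theorem~\ref{t:quadratic_shelling}. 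That this $T$ is Jaeger and $\overrightarrow{a}$-stick follows by the same tour-splicing argument used there; uniqueness follows because any $\overrightarrow{a}$-stick tree must have all its non-$a$ edges pointing away from $b_0$ (head-edges in the tree sense), which forces the $b_0$-side to be the $\prec$-minimal tree of that side and the far side to be the $\prec$-minimal tree there as well.

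\textbf{Main obstacle.} The delicate point is the \emph{uniqueness} half: showing that the one-tail-edge condition, together with the Jaeger property, leaves exactly one choice of tree \emph{and} of facet graph. In particular one must rule out the possibility that two different facet graphs (differing by reversing some directed cut not equal to the fundamental cut of $a$) each contain an $\overrightarrow{a}$-stick Jaeger tree. I expect this is handled by observing that an $\overrightarrow{a}$-stick tree forces \emph{all} edges except $a$ to have their tails on the far-from-$b_0$ side of their fundamental cuts, which rigidly determines the layering $l(v)$ up to the single "jump" across $a$ — essentially $l(v)=\dist(b_0,v)$ adjusted on one side — and hence determines the facet graph uniquely; combined with $\prec$-minimality on each side of $a$, the tree itself is then forced.
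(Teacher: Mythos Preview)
There is a factual slip that propagates through your Case-1 argument: in $G_1$ the layering is $l(v)=-\dist(b_0,v)$, so $b_0$ is a \emph{sink} and every edge of $G_1$ (hence of $T_1$) points \emph{toward} $b_0$. Thus every edge of $T_1$ is a head-edge, not a tail-edge as you write; the fundamental cuts $C^*_{G_1}(T_1,e)$ are directed, but away from the $b_0$-side, not toward it. You do eventually land on ``$T_1$ has no tail-edges'', but not by valid reasoning, and the subsequent step (``forcing $T=T_1$'') fails because an $\overrightarrow{e}$-stick tree $T$ could live in a facet graph $G\neq G_1$, in which case the tours of $T$ and $T_1$ can diverge well before $e$ is reached (at any edge whose orientation or presence differs between $G$ and $G_1$).

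The more serious gap is exactly the one you flag as the main obstacle: uniqueness across facet graphs. Your heuristic that an $\overrightarrow{a}$-stick tree ``rigidly determines the layering'' does not work, because the layering is read off from the \emph{tree}, and two $\overrightarrow{a}$-stick candidates $T\neq T'$ with different edge sets would induce different layerings and hence could sit in different facet graphs --- precisely the scenario you need to exclude. The paper closes this gap with Lemma~\ref{l:one-stick_stronger} (hence Lemma~\ref{l:at_most_one_sticky_Jaeger_tree}): if $T$ and $T'$ lie in distinct facet graphs $G$, $G'$, then Lemma~\ref{l:nonfirst_orientation_exits_dircut} produces a cut $(V_0,V_1)$ with $b_0\in V_0$ that is oppositely directed in $G$ and $G'$; whichever tree lives in the ``later'' facet graph must pick up a tail-edge from this cut, and since the cut edges carry opposite orientations in $G$ and $G'$ none of them can equal $\overrightarrow{a}$. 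That cut argument, not layering rigidity, is the missing ingredient.

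For existence the paper also takes a different and more explicit route than your gluing idea: it introduces the \emph{greedy} and \emph{$\overrightarrow{uv}$-almost greedy} tree constructions, builds a specific new layering $l''$ (shifting $l$ by $2$ on the set $S^d_x$ of vertices with a directed path to $x$ in $G_1$, and by $1$ on the ``one hidden-edge away'' set $S^h_x$), and then runs the almost-greedy procedure in the resulting facet graph. Your splice of $T_1[V_0]\cup a\cup(\text{first Jaeger tree on the far side})$ is in the right spirit but leaves unaddressed why the result is Jaeger in the new facet graph when edges of $\cG$ other than those of $C^*(T_1,e)$ may cross between the two sides, and how exactly the ``far side'' facet graph and its base are chosen.
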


We start by showing that for any oriented edge $\overrightarrow{e}$, there is at most one $\overrightarrow{e}$-stick Jaeger tree. This is in fact true in greater generality, hence we state it in this more general form.

Let $\{\overrightarrow{e}_1, \dots \overrightarrow{e}_r\}$ be a set of oriented edges. Then we call a Jaeger tree $T\in \jaeg(\cG)$ an $\{\overrightarrow{e}_1, \dots \overrightarrow{e}_r\}$-stick Jaeger tree, if $\{\overrightarrow{e}_1, \dots \overrightarrow{e}_r\}$ is exactly the set of tail-edges of $T$ (that is, all these edges are in $T$ and they are first reached at their tail in the tour of $T$, moreover, all other edges of $T$ are first reached at their head).

\begin{lemma}\label{l:at_most_one_sticky_Jaeger_tree}
	For any (possibly empty) set of oriented edges $\{\overrightarrow{e}_1, \dots \overrightarrow{e}_r\}$, there is at most one $\{\overrightarrow{e}_1, \dots \overrightarrow{e}_r\}$-stick Jaeger tree in $\jaeg(\cG)$.
\end{lemma}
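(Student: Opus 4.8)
The plan is to prove Lemma~\ref{l:at_most_one_sticky_Jaeger_tree} by showing that the set of tail-edges $\{\overrightarrow{e}_1,\dots,\overrightarrow{e}_r\}$, together with the fixed ribbon structure and basis, \emph{determines} the tree $T$ uniquely via a reconstruction of its tour. The key observation is that once we know which oriented edges are tail-edges and which are hidden (the latter being forced, as we shall see), the tour is deterministic: at each step, the current node-edge pair $(x,xy)$ together with the classification of $xy$ tells us whether the tour skips or traverses $xy$, and hence what the next current pair is. So the strategy is: assume $T$ and $T'$ are both $\{\overrightarrow{e}_1,\dots,\overrightarrow{e}_r\}$-stick Jaeger trees, consider their tours in $\cG$ (keeping track of hidden edges, as in the quadratic shelling subsection), and show that the tours cannot have a first difference, forcing $T=T'$.

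First I would set up the contradiction: suppose the tours of $T$ and $T'$ first differ when the node-edge pair $(x,xy)$ becomes current (this pair is common to both tours by the definition of ``first difference''). At this moment the edge $xy$ must have different status with respect to $T$ and $T'$ — either it lies in one tree but not the other, or it is present (and oriented) in one facet graph but hidden or oppositely oriented in the other. I would then enumerate the possibilities for the status of $xy$ in $T$: (a) $xy\in T$ reached at its tail; (b) $xy\in T$ reached at its head; (c) $xy$ present in the facet graph of $T$ but $xy\notin T$; (d) $xy$ hidden for the facet graph of $T$. By the Jaeger property, case (c) can only occur when we reach $xy$ at its tail (otherwise the non-edge would have been first seen at its head, contradicting the Jaeger condition), and similarly the ``reached at its head'' subcase of the situation forces certain orientations. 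The crucial point: being reached at tail vs.\ head is a property of where $b_0$ sits relative to the two components, i.e.\ it is the tail-edge/head-edge dichotomy, and whether $xy$ is among $\{\overrightarrow{e}_1,\dots,\overrightarrow{e}_r\}$ is the same question for $T$ and $T'$ since they have the same tail-edge set. So case (a) happens for $T$ iff it happens for $T'$; and in that case the edge is literally $\overrightarrow{e}_i$ for the same $i$, so it is the same oriented edge in both — no difference. That rules out (a) being the site of the first difference.

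The remaining work is to rule out the first difference being of type (b), (c), or (d). Here I would argue that the tour up to the pair $(x,xy)$ is identical for $T$ and $T'$ (that is what ``first difference'' means), hence the set of edges already traversed is the same, hence the current component structure at $x$ is the same; combined with the fact that the skip/traverse decision at each prior step was forced, the set of edges that have been \emph{seen} so far and their classification agree. Now if $xy\notin T$ (cases c, d for $T$ — non-edge seen at its tail, or hidden) while $xy\in T'$, then in $T'$ the edge is a head-edge (it cannot be a tail-edge, since the tail-edge sets agree and $xy\notin T$), so the tour of $T'$ traverses it in the head-to-tail direction; one then checks this is incompatible with the common earlier portion of the tour, because traversing $xy$ in $T'$ would connect two components that — by the agreement of everything seen so far — are already joined in a way that makes $xy$ create a cycle, or else would mean the tail of $xy$ was not yet reached, contradicting that $(x,xy)$ is current with $x$ the relevant endpoint. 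Symmetrically for the roles of $T$ and $T'$ swapped, and for the present-but-hidden distinction. The main obstacle I anticipate is precisely this bookkeeping: carefully arguing that ``the tours agree up to here'' propagates to ``the partial spanning forests agree up to here,'' so that the status of $xy$ (hidden / present-nontree / tree-head / tree-tail) is actually forced by data common to $T$ and $T'$ — the subtlety being that $T$ and $T'$ may live in \emph{different} facet graphs, so one must use Lemmas~\ref{l:edge_in_or_out_in_two_facets} and \ref{l:edge_ordered_differently_in_two_facets} to control how the two facet graphs can disagree along a cut, exactly as in the proof of Theorem~\ref{t:quadratic_shelling}. Once that is in hand, each of cases (b), (c), (d) yields a contradiction with the Jaeger property or with the definedness of the tour, completing the proof.
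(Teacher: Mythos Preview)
Your overall strategy---compare the two tours, locate the first difference, and derive a contradiction---is reasonable, and your observation that case (a) (tail-edge) cannot be the site of the first difference is correct and is essentially the paper's argument in the same-facet-graph case. However, the mechanism you describe for dispatching cases (b), (c), (d) does not work. Saying that ``traversing $xy$ in $T'$ would connect two components that \ldots\ are already joined in a way that makes $xy$ create a cycle'' makes no sense: $T'$ is a tree, so traversing one of its edges never creates a cycle, and there is nothing contradictory about the tail $y$ not yet having been reached. More fundamentally, your framing that the tail-edge set ``determines the tour deterministically'' is not right: at a node-edge pair $(x,xy)$ with neither $\overrightarrow{xy}$ nor $\overrightarrow{yx}$ in $\{\overrightarrow{e}_1,\dots,\overrightarrow{e}_r\}$, the edge could legitimately be a head-edge (traverse) or a non-tree/hidden edge (skip), and nothing in the data you have forces one over the other.

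The paper's route is different and cleaner. It proves the stronger Lemma~\ref{l:one-stick_stronger} and never analyzes the first tour difference when the facet graphs differ. Instead, if $T\in\jaeg(G)$ and $T'\in\jaeg(G')$ with $G\neq G'$, it applies Lemma~\ref{l:nonfirst_orientation_exits_dircut} (comparing $f(G)$ and $f(G')$) to obtain a cut $(V_0,V_1)$ with $b_0\in V_0$ in which the edges of $G$ and $G'$ point in opposite directions. Since each $\overrightarrow{e}_i$ lies in both $G$ and $G'$ with the same orientation, none of them is in the cut. But the tree on the ``wrong'' side of the $f$-comparison must contain a cut edge, and the first such edge reached in its tour is a tail-edge---one that is \emph{not} among the $\overrightarrow{e}_i$. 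That contradicts the stick property. Your invocation of Lemmas~\ref{l:edge_in_or_out_in_two_facets} and~\ref{l:edge_ordered_differently_in_two_facets} ``as in the proof of Theorem~\ref{t:quadratic_shelling}'' can be made to yield exactly this same contradiction (an unwanted extra tail-edge in the cut), but that is not the argument you actually wrote down; you would need to replace the cycle/reachability reasoning with this tail-edge-in-the-cut argument.
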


We can further strengthen the above lemma in the following way. We will prove this stronger version. 
\begin{lemma}\label{l:one-stick_stronger}
	Let $\{\overrightarrow{e}_1, \dots \overrightarrow{e}_r\}$ be a set of oriented edges.
	Suppose that there is a Jaeger tree $T$ such that $\{\overrightarrow{e}_1, \dots \overrightarrow{e}_r\}\subseteq T$, and $ \{\overrightarrow{e}_1, \dots \overrightarrow{e}_r\}$ contains all the tail-edges of $T$ (and possibly some head-edges). Then there is no $\{\overrightarrow{e}_1, \dots \overrightarrow{e}_r\}$-stick Jaeger tree $T'$, except possibly for $T$. 
\end{lemma}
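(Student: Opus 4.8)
The plan is to compare a hypothetical $\{\overrightarrow{e}_1, \dots, \overrightarrow{e}_r\}$-stick Jaeger tree $T'$ with the given tree $T$ by looking at the first place where their tours differ, and to run an argument essentially identical to the ``point in a face'' part of the proof of Theorem \ref{thm:shelling_of_symm_edge_poly} (or Theorem \ref{t:quadratic_shelling}). Suppose for contradiction that $T'\neq T$ is such a tree. Since $T$ and $T'$ agree until some first moment, let $(u,uv)$ be the current node-edge pair at which the status of $uv$ differs between the two tours. Both trees are Jaeger, so as recorded after Definition \ref{def:ordering_of_Jaeger_trees_in_a_face} and in the discussion of $<_4$, this first difference must be of the form: $uv$ is reached at its tail $u$, and it lies in one tree but not the other (or, in the inter-facet setting, the orientations/presences differ). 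Without loss of generality say $\overrightarrow{e}:=\overrightarrow{uv}\in T$ while $\overrightarrow{uv}\notin T'$ (the hidden and oppositely-oriented sub-cases are handled by Lemmas \ref{l:edge_in_or_out_in_two_facets} and \ref{l:edge_ordered_differently_in_two_facets} exactly as in the proof of Theorem \ref{t:quadratic_shelling}); also $\overrightarrow{e}$ is a tail-edge of $T$ by construction.

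\medskip

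Next I would show $\overrightarrow{e}$ must be one of the $\overrightarrow{e}_i$. Indeed, $\overrightarrow{e}$ is a tail-edge of $T$, and by hypothesis the set $\{\overrightarrow{e}_1,\dots,\overrightarrow{e}_r\}$ contains \emph{all} tail-edges of $T$; hence $\overrightarrow{e}=\overrightarrow{e}_i$ for some $i$. But then $\overrightarrow{e}_i\in T'$ as well, because $T'$ is an $\{\overrightarrow{e}_1,\dots,\overrightarrow{e}_r\}$-stick tree and in particular contains every $\overrightarrow{e}_j$. So $\overrightarrow{uv}=\overrightarrow{e}_i\in T'$ --- contradicting $\overrightarrow{uv}\notin T'$. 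This already closes the same-facet case. For the remaining inter-facet sub-cases (where the first difference is a hidden edge on one side or an edge oriented the other way), the cut $(V_0,V_1)$ supplied by Lemma \ref{l:edge_in_or_out_in_two_facets} or Lemma \ref{l:edge_ordered_differently_in_two_facets} is crossed, in the tour, first at $e$ reached at its tail, forcing $b_0\in V_0$; since $T'$ must contain some edge of the cut and every cut-edge points from $V_0$ to $V_1$ in the facet of $T'$, that edge is a tail-edge of $T'$, so it is one of the $\overrightarrow{e}_j$, hence also lies in $T$; but no cut-edge occurs in both facets with the same orientation, the desired contradiction.

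\medskip

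Having ruled out the first difference, the tours of $T$ and $T'$ agree everywhere, and a tour determines its tree (each tree edge is exactly the edge that is ``traversed'' rather than ``skipped''), so $T=T'$. This proves the lemma, and in particular Lemma \ref{l:at_most_one_sticky_Jaeger_tree} follows by taking $T$ to be any $\{\overrightarrow{e}_1,\dots,\overrightarrow{e}_r\}$-stick Jaeger tree (if one exists).

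\medskip

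\textbf{Expected main obstacle.} The routine part is the case analysis of the first difference; the one genuinely delicate point is making sure the ``first difference of two Jaeger tours is always an edge reached at its tail'' claim is used correctly across facets --- i.e.\ that when we keep track of hidden edges (as in the quadratic ordering), a first difference cannot be, say, ``traversed at the head in $T'$ versus hidden in $T$'' in a way that escapes the three enumerated cases. This is exactly the bookkeeping already carried out in the proof of Theorem \ref{t:quadratic_shelling}, so the work here is to cite it rather than redo it; the only real content beyond that proof is the short observation that a first-difference tail-edge of $T$ belongs to the prescribed set and therefore to $T'$ as well, which is the contradiction.
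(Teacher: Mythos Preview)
Your approach is sound but differs from the paper's in the inter-facet case, and there is a small gap in your execution there.

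For the same-facet case your argument is fine and matches the paper's. For different facets, the paper does \emph{not} use the first-difference machinery and Lemmas~\ref{l:edge_in_or_out_in_two_facets}/\ref{l:edge_ordered_differently_in_two_facets}; instead it invokes Lemma~\ref{l:nonfirst_orientation_exits_dircut} directly. Comparing $f(G)$ and $f(G')$, that lemma produces a cut $(V_0,V_1)$ with $b_0\in V_0$ such that in whichever facet graph has the smaller $f$-value, all cut edges point away from $V_0$. Since every $\overrightarrow{e}_i$ lies in both $G$ and $G'$ with the same orientation, none of them is in the cut; hence the tree in the smaller-$f$ facet has a tail-edge outside $\{\overrightarrow{e}_1,\dots,\overrightarrow{e}_r\}$, which contradicts either the hypothesis on $T$ or the stick property of $T'$, depending on which facet is smaller. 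No first-difference case analysis is needed.

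Your route via the quadratic-shelling lemmas can also be made to work, but the sentence ``every cut-edge points from $V_0$ to $V_1$ in the facet of $T'$'' is not justified. The cut supplied by Lemma~\ref{l:edge_in_or_out_in_two_facets} or \ref{l:edge_ordered_differently_in_two_facets} points away from $b_0$'s side in exactly one of the two facets, and which one depends on the sub-case (equivalently, on whether $T<_4 T'$ or $T'<_4 T$). The proof of Theorem~\ref{t:quadratic_shelling} that you cite assumes $T'<_4 T$ and concludes a tail-edge in $T$, so it only covers one direction. To close the argument you must also note that if it is $T$'s facet in which the cut points away from $V_0$, then $T$ acquires a tail-edge in the cut, and since no cut edge lies among the $\overrightarrow{e}_i$ (they have the same orientation in both facets), this contradicts the hypothesis on $T$. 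With this symmetric clause added, your proof is complete; the paper's use of Lemma~\ref{l:nonfirst_orientation_exits_dircut} simply packages the two directions more cleanly.
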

\begin{proof}
	We can suppose that $\{\overrightarrow{e}_1, \dots \overrightarrow{e}_i\}$ are tail-edges and $\{\overrightarrow{e}_{i+1}, \dots \overrightarrow{e}_r\}$ are head edges in $T$.
	
	%If $i=r$ then we are ready by Lemma \ref{l:at_most_one_sticky_Jaeger_tree}.
	
	Suppose for a contradiction that there is a $\{\overrightarrow{e}_1, \dots \overrightarrow{e}_r\}$-stick Jaeger tree $T'$. 
	
	If $T$ and $T'$ are Jaeger trees of the same facet graph $G$, then the first difference in their tours needs to be that an edge $f$ is included into one of them, and not included into the other. As both graphs live in $G$, the edge $f$ is oriented the same way in the two tours. As $T$ and $T'$ are both Jaeger, $\overrightarrow{f}$ needs to be reached at its tail, hence $f$ is a tail-edge in either $T$ or $T'$. However, as $\{\overrightarrow{e}_1, \dots \overrightarrow{e}_r\} \subseteq T\cap T'$, $\overrightarrow{f}\notin \{\overrightarrow{e}_1, \dots \overrightarrow{e}_r\}$.  This is a contradiction, since $\{\overrightarrow{e}_1, \dots \overrightarrow{e}_r\}$ should contain all tail-edges of both $T$ and $T'$.
	
	Now suppose that the facet graph of $T$ is $G$ and the facet graph of $T'$ is $G'\neq G$.
	Suppose that $f(G)>f(G')$. Then by Lemma \ref{l:nonfirst_orientation_exits_dircut} there exist a cut $(V_0, V_1)$ such that $b_0\in V_0$ and in $G(V_0,V_1)$ each edge is either not present or points from $V_1$ to $V_0$ while in $G'(V_0,V_1)$ each edge is either not present or points from $V_0$ to $V_1$.
	As the edges $\{\overrightarrow{e}_1, \dots \overrightarrow{e}_r\}$ are present both in $T$ and in $T'$ with the same orientation, they are present in both $G$ and $G'$ with this orientation, hence none of them is in the cut $(V_0,V_1)$.
	
	As $T'$ is connected, it needs to contain at least one edge from the cut $G'(V_0,V_1)$. Moreover, the edge $\overrightarrow{f}$ of $T'$ that is first reached from $G'(V_0,V_1)$ will be reached at its tail. 
	As by the above reasoning $\overrightarrow{f}\notin \{\overrightarrow{e}_1, \dots \overrightarrow{e}_r\}$, $T'$ cannot be an $\{\overrightarrow{e}_1, \dots \overrightarrow{e}_r\}$-stick Jaeger tree.
	
	If $f(G')>f(G)$, then by the same argument, we conclude that $T$ needs to contain a tail-edge $\overrightarrow{f}\notin \{\overrightarrow{e}_1, \dots \overrightarrow{e}_r\}$, which is again a contradiction.
\end{proof}

Lemma \ref{l:one-stick_stronger} implies that in all the cases listed in \ref{l:one-stick_trees}, there is at most one $\overrightarrow{e}$-stick Jaeger tree.
Moreover, it also implies that there is no $\overrightarrow{e}$-stick Jaeger tree for $\overrightarrow{e}\in T_1$. Indeed $\overleftarrow{e}\in T_1$ is a head-edge of $T_1$ (since $T_1$ has only head-edges). Hence we can apply Lemma \ref{l:one-stick_stronger} to $\{\overrightarrow{e}\}$.

However, to characterize Jaeger trees in $\jaeg_1(\cG)$ we also need ways to construct Jaeger trees with one tail-edge.
For this, we will use the following greedy procedure. Intuitively, we traverse the graph and build up a tree while obeying the Jaeger rule (not cutting edges at their head), and always cutting edges at their tail if they are not already in the tree. We do not consider hidden edges in this process.

\begin{defn}[greedy tree]\label{def:greedy_tree}
	Let $G$ be a ribbon digraph with a basis $(b_0,b_0b_1)$.
	Let us call the outcome of the following procedure a greedy tree. We start with the empty subgraph $H=\emptyset$, and with current vertex $b_0$ and current edge $b_0b_1$. At any moment, if the current node-edge pair is $(h, ht)$ where $h$ is the head of $ht$, then if $(t,th)$ has not yet been current node-edge pair, then we include $ht$ into $H$, traverse $ht$ and take $(t, th^+)$ as the next current node-edge pair. If $(t,th)$ has already been current node edge pair, then we do not include $ht$ into $H$, an take $(h, ht^+)$ as next current node-edge pair. 
	If at some moment the current node-edge pair is $(t, th)$ where $t$ is the tail of $th$, and $(h, ht)$ has not yet been current node-edge pair, then do not include $th$ into $H$ and take $(t, th^+)$ as the next current node-edge pair. If $(h, ht)$ has already been current node-edge pair, then take $(h, ht^+)$ as next current node-edge pair. The process stops when a node-edge pair becomes current for the second time. The output is the subgraph $H$.
\end{defn}
See the left panel of Figure \ref{fig:greedy_and_almost_greedy_trees} for an example.

\begin{lemma}\label{l:greedy_tree_is_a_tree}
	The above process produces a (not necessarily spanning) tree.
\end{lemma}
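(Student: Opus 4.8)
The plan is to show that the output subgraph $H$ is connected and acyclic; since it is finite, this makes it a tree. For \emph{connectedness}, I would argue that every vertex $v$ that is ever visited during the process (i.e., that appears as the current node of some node-edge pair) is connected to $b_0$ within $H$. This follows by induction along the walk: the base vertex $b_0$ is trivially connected to itself, and whenever the process moves from a current node $h$ to a new current node $t$, it does so precisely because it traversed an edge $ht$ that was just placed into $H$ (the case ``$(t,th)$ has not yet been current and we include $ht$''). Hence each newly reached vertex acquires an $H$-path to $b_0$. It remains to observe that \emph{every} vertex reachable from $b_0$ (in the relevant component) is in fact visited; this is a standard feature of Bernardi-type tours, and I would adapt Bernardi's argument (\cite[Lemma 5]{Bernardi_first}) or argue directly that if some neighbor of a visited vertex were never visited, the walk around that vertex could not close up.

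For \emph{acyclicity}, the key is the Jaeger-type invariant built into the greedy rule: an edge $e=ht$ is put into $H$ only when the current pair is $(h,ht)$ with $h$ the head, and only if $(t,th)$ has not yet been current. I would show that at the moment $e$ is added, its head $h$ has already been visited but its tail $t$ has not (the latter because $(t,th)$ has not been current, and in a tour every pair $(t, t?)$ becomes current "together" in a block once $t$ is first reached). Combined with connectedness of the already-built part, this means each new edge attaches a genuinely new vertex $t$ to the tree, so no cycle can ever be created. Equivalently, one can do induction on the number of edges of $H$: each addition increases the vertex count of $H$ by exactly one, so $|E(H)| = |V(H)|-1$ throughout, which together with connectedness yields a tree.

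The step I expect to be the main obstacle is the bookkeeping around ``$(t,th)$ has not yet been current $\Rightarrow$ $t$ has not yet been visited (as a current node).'' This requires understanding precisely how the tour clusters the pairs at a given vertex: when the walk first arrives at a vertex $x$ via some edge, the pairs $(x, xy)$ for the various edges $y$ at $x$ are processed consecutively in the cyclic ribbon order starting from the arrival edge, so either all or none of them have been current at any given time before the walk departs $x$ permanently. Pinning this down — essentially re-deriving the relevant structural properties of Bernardi's tour in the present ``greedy'' (non-fixed-tree) setting, where the set $H$ is being built on the fly rather than given in advance — is the technical heart of the argument; once it is in place, both connectedness and acyclicity follow cleanly, and the lemma is proved.
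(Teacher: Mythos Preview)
There is a genuine gap in your acyclicity argument. You correctly flag the implication ``$(t,th)$ has not yet been current $\Rightarrow$ $t$ has not yet been visited'' as the crux, but the justification you offer --- that the pairs $(x,xy)$ at a vertex $x$ are processed in a single consecutive block, so that at any moment either all or none of them have been current --- is false. As soon as the walk, while at $t$, includes some head-edge and traverses it, it leaves $t$ with only some of the pairs $(t,ty)$ having been current; it may then wander through the subtree it is building and in particular reach $h$ long before returning to $t$ to process $(t,th)$. So nothing in your outline rules out adding $ht$ at a moment when $t$ is already in $H$.

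More fundamentally, your argument nowhere uses the semi-balancedness of $G$, and this hypothesis is indispensable. For a concrete witness, take vertices $b_0,t,x,h$, edges $\overrightarrow{tb_0}$, $\overrightarrow{xt}$, $\overrightarrow{hx}$, $\overrightarrow{th}$, ribbon order $tb_0,tx,th$ at $t$ and $hx,ht$ at $h$, base $(b_0,b_0t)$. The greedy process includes $tb_0$, then $xt$, then $hx$, and then at $(h,ht)$ finds that $h$ is the head of $\overrightarrow{th}$ and $(t,th)$ has not yet been current --- so it includes $th$ and creates the cycle $t,x,h$. This digraph is of course not semi-balanced (the $3$-cycle has all three edges in one cyclic direction), but it shows that any correct proof must invoke semi-balancedness somewhere. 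The paper's proof does this: it argues by contradiction, assumes a cycle $C$ is created when the edge $\overrightarrow{xy}$ is added, uses that $C$ is semi-balanced to locate an edge of $C$ oriented opposite to $\overrightarrow{xy}$, and then propagates this orientation information around $C$ (via the ribbon structure at each vertex) to conclude that $(x,xy)$ was already current before $(y,yx)$, contradicting the inclusion rule.
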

\begin{proof}
	Suppose for a contradiction that a cycle appears in $H$, and suppose that this first happens when an edge $\overrightarrow{xy}$ is included into $H$. Stop the process immediately before the inclusion of $\overrightarrow{xy}$. Then the current node-edge pair is $(y,yx)$. Let $C$ be the unique cycle that we get upon including $\overrightarrow{xy}$ into $H$. Let us call its vertices $y=v_0, v_1, \dots v_k= x$. %We claim that for any $i=0,\dots k-1$, the edge $v_iv_{i+1}$ is oriented as $\overrightarrow{v_iv_{i+1}}$. This will be a contradiction, since this would imply that $C$ is an oriented cycle, but we cannot have such a cycle in a semi-balanced digraph.
	As $C$ is semi-balanced, half the edges need to be oriented in one cyclic direction, and half of them in the other one. Hence we need to have at least one edge $v_jv_{j+1}$ that is oriented as $\overrightarrow{v_{j+1}v_{j}}$ (as $\overrightarrow{xy}$ stands in the opposite direction to this).
	
	Notice that until now, during the process, around any vertex $v$, the node-edge pairs $\{(v, vu): uv \in E\}$ became current in an order compatible with the ribbon structure. Hence if a vertex $v$ is first reached through an edge $uv$, then $(v,vu)$ becomes current only after each $(v, vu')$ for $u'\in\Gamma(v)-u$ has become current (where $\Gamma(v)$ denotes all in-, and out-neighbors of $v$).
	
	Notice also, that during the process, we always move along the  (currently present) edges of $H$. Hence (since we have been in $x$, and then we later get to $y$) we must have traversed each edge $v_iv_{i+1}$ from $v_{i+1}$ to $v_{i}$. That is, each node-edge pair $(v_{i+1},v_{i+1}v_{i})$ (for $i=0, \dots k-1$) has been current before $(y,yx)$.
	
	%Let us look at the path of the algorithm as it traverses $C$ from $x$ to $x$. Let us call the vertices along this path $x=v_0, v_1, \dots v_k$ in the order of traversal. We claim that no edge of $C$ was traversed from tail to head. 
	%Indeed, in the algorithm, each edge is first traversed from head to tail. Hence 
	Take the edge $v_jv_{j+1}$ which is oriented as $\overrightarrow{v_{j+1}v_{j}}$. As $\overrightarrow{v_{j+1}v_j}\in H$, $(v_{j},v_{j}v_{j+1})$ was current before $(v_{j+1},v_{j+1},v_j)$. Hence $v_{j+1}$ was first reached through $\overleftarrow{v_jv_{j+1}}$. Thus, by our previous note, each node-edge pair $(v_{j+1},v_{j+1}u)$ for $u\neq v_j$ became current before $(v_{j+1},v_{j+1},v_j)$. In particular, $(v_{j+1},v_{j+1}v_{j+2})$ also became current before $(v_{j+1},v_{j+1},v_j)$. As we have not traversed a cycle by that point, this must be the first time when $v_{j+1}v_{j+2}$ became current. Hence (since $v_{j+1}v_{j+2} \in H$), we need to have the orientation $ \overleftarrow{v_{j+1}v_{j+2}}$.
	
	Continuing this way, we get that $v_{k-1}x$ also has to be oriented as $\overleftarrow{v_{k-1}x}$. But then $(x,\overrightarrow{xy})$ has to become current before $(x,\overrightarrow{xv_{k-1}})$, and hence before $(y,\overleftarrow{yx})$. But this implies that we will not include $\overrightarrow{xy}$ into $H$, a contradiction.
	
	%was traversed from tail to head, then each edge incident to $v_i$ already became current with $v_i$ as current node. As $v_{i-1}v_i$ is in $H$, and we only include edges first seen from the tail direction, this implies that $v_{i-1}v_i$ is oriented from $v_{i-1}$ to $v_i$. Repeating this argument, we conclude that $v_0v_1$ is also oriented from $v_0$ to $v_1$. This implies that each edge incident to $v_0$ has become current with $v_0$ as current vertex before traversing $v_0v_1$ from $v_0$ to $v_1$. As at the last step, $v_kv_0$ is included into $H$, it is oriented from $v_0$ to $v_k$. But then, as $(v_0,v_0v_k)$ was already current node-edge pair before, we should not have included it into $H$, a contradiction.
	%This shows that all edges of $C$ were traversed from head to tail. This shows that $C$ is an oriented cycle. But as our graph is semibalanced, it contains no oriented cycles, a contradiction.
\end{proof}

Note the following:
\begin{claim}
If the greedy tree of a digraph $G$ is a spanning tree, then it is a Jaeger tree.
\end{claim}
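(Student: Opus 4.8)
The plan is to show that when the greedy tree $H$ of Definition~\ref{def:greedy_tree} happens to be a spanning tree, the walk performed by the greedy procedure coincides, step by step, with the tour of $H$ in the sense of Definition~\ref{def:tour_of_a_tree} (for the ambient ribbon structure and basis). Once this is known, the Jaeger property of $H$ is immediate, because the procedure was designed so that it never includes into $H$ an edge that it first meets at its head.

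The one ingredient I would isolate first is an elementary observation about the procedure that holds irrespective of whether $H$ turns out to be spanning: the membership of an edge $e$ in $H$ is decided once and for all the first time $e$ becomes current. Indeed, if $e$ is first met at its head, then $e$ is placed into $H$ at that step and never removed; if $e$ is first met at its tail, then it is not taken then, and by the time the head‑pair of $e$ later becomes current the tail‑pair has already occurred, so $e$ is not taken at that step either. Hence
\[
\overrightarrow{e}\in H \quad\Longleftrightarrow\quad e \text{ is first met at its head during the procedure.}
\]

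Next I would prove, by induction on the number of steps, that the greedy walk equals the tour of $H$. Both begin at $(b_0,b_0b_1)$. For the inductive step, assume the two walks have agreed up to some current pair $(x,xy)$; using the observation one reads off which of the four rules of Definition~\ref{def:greedy_tree} fires at $(x,xy)$, and a short case check shows it does exactly what the tour of $H$ does — it moves to $y$ and continues with $(y,yx^+)$ when $xy\in H$ (``traversing'' $xy$), and it stays at $x$ and continues with $(x,xy^+)$ when $xy\notin H$ (``skipping'' $xy$). Here one invokes Bernardi's observation \cite[Lemma~5]{Bernardi_first} that in the tour of the spanning tree $H$ each edge is current exactly twice, once from each endpoint; this is what makes the case analysis exhaustive (for instance, ``first met at its tail'' rules out a later second visit from the tail side). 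Since both walks halt precisely when $(b_0,b_0b_1)$ would recur, they agree throughout.

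The conclusion then follows: if $\overrightarrow{th}\in G-H$ then $th\notin H$, so by the observation $(t,th)$ precedes $(h,th)$ in the greedy walk, hence also in the tour of $H$; as this holds for every edge of $G-H$, the spanning tree $H$ satisfies Definition~\ref{def:jaegertree} and is therefore a Jaeger tree. I expect the inductive verification above to be the main obstacle: its subtlety is that the greedy procedure commits to edge memberships \emph{online}, from the history of the walk alone, whereas the tour of $H$ is defined with reference to the finished tree $H$ — and it is precisely the displayed observation that reconciles these, by showing that the online decision made at an edge's first visit already agrees with its eventual status in $H$, so no later revision is ever needed. A smaller point to handle carefully is the base case of the induction, namely the behaviour at the base edge $b_0b_1$ according to whether $b_0$ is its head or its tail.
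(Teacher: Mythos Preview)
Your approach is essentially the same as the paper's: both argue that the greedy walk coincides with the tour of $H$, whence the Jaeger property follows from the construction. The paper states this in one line, while you spell out the induction; your version is correct and is a faithful unpacking of the paper's assertion.

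One slip to fix: in your first paragraph you write that the procedure ``never includes into $H$ an edge that it first meets at its head,'' which is the opposite of what you mean (and of what your displayed equivalence and your final paragraph correctly say). The procedure \emph{always} includes an edge first met at its head; equivalently, any edge left out of $H$ was first met at its tail, which is exactly the Jaeger condition.
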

\begin{proof}
    This follows from the construction, as the tour of the greedy tree agrees with the walk as we build it up, and we always include any edge that is first reached at its head.
\end{proof}

The greedy tree might not be spanning. Indeed, if there is a cut $(V_0,V_1)$ in the graph $G$ such that $b_0\in V_0$ and all edges of $G(V_0,V_1)$ point toward $V_1$, then the process will never get to $V_1$. However, this is the only obstacle that can prevent the greedy tree from being spanning. Indeed, suppose that the greedy tree is not spanning for a digraph, and let $V_0$ be the vertex set of the greedy tree, and let $V_1$ be the rest of the vertices. We claim that each edge of the cut $(V_0,V_1)$ is oriented toward $V_1$. At any node of $V_0$, all incident edges eventually become current. Hence if there were any edges in the cut $(V_0,v_1)$ oriented toward $V_0$, then the first reached such edge would be included into $H$, a contradiction.

Hence in the facet graph $G_1$ (that does not contain a bad cut by Lemma \ref{l:reversing_cut_pointing_away}), the greedy tree will be a Jaeger tree. 
It is also clear by its construction that it will not contain any tail-edge.
We have seen in the proofs of Theorems \ref{thm:shelling_of_symm_edge_poly} and \ref{t:quadratic_shelling} that for both $<_f$ and $<_4$ if a Jaeger tree is not the first one, then it contains at least one tail-edge. Hence the greedy tree of $G_1$ is the first Jaeger tree according to both $<_f$ and $<_4$, the one that we were referring to as $T_1$. 
%Legyen-e (esetleg remark-ban, megjegyezve hogy nem szukseges erteni): Pontosan akkor van fesz be-fenyő, ha minden pontból elérhető a $b_0$, ami azzal ekv hogy minden vagasban van $b_0$ fele allo el. Ez persze csak a $G_1$-ben teljesül, tehát a greedy tree csak itt lesz feszítő.

To understand $\jaeg_1(\cG)$, we will also need a slightly modified version of the greedy tree construction.

\begin{defn}[almost-greedy tree]
	Let $G$ be a ribbon digraph, $(b_0,b_0b_1)$ a basis, and $\overrightarrow{uv}$ an arbitrary edge. 
	
	Let us call the outcome of the following procedure an $\overrightarrow{uv}$-almost greedy tree. We follow the steps of Definition \ref{def:greedy_tree}, with the exception that if $(u,uv)$ becomes current node-edge pair before $(v, vu)$, then (when it becomes current) we include $uv$ into $H$, and take $(v,vu^+)$ as next current node-edge pair.
\end{defn}
See the rightmost panel of Figure \ref{fig:greedy_and_almost_greedy_trees} for an example of an almost greedy tree.

\begin{lemma}\label{l:almost_greedy_tree}
	The above process produces a (not necessarily spanning) tree.
\end{lemma}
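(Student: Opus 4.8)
The plan is to mirror the proof of Lemma~\ref{l:greedy_tree_is_a_tree}. As there, $H$ stays connected throughout the process, since every edge we insert has at least one endpoint already in $H$, namely the current node; so it suffices to prove that $H$ never acquires a cycle. Suppose, for contradiction, that a cycle $C$ first appears at the moment some edge $e$ is inserted, and freeze the process there. Let $s$ be the node the walk is currently at and $d$ the node it is about to move to: in the ordinary case $e=\overrightarrow{xy}$ is inserted at its head, so $s=y$ and $d=x$; in the exceptional case $e=\overrightarrow{uv}$ is inserted at its tail because $(u,uv)$ became current before $(v,vu)$, so $s=u$ and $d=v$. In either case $d$ already lies in $H$ (otherwise no cycle could form), and $C$ is the union of $e$ with the unique path $P$ of $H$ joining $s$ and $d$.

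The first thing I would record is that, just as for Bernardi's tour and the ordinary greedy walk, the walk underlying the almost-greedy process traverses the growing forest $H$ in a depth-first manner; the only deviation is that the special edge $\overrightarrow{uv}$ is ``descended'' from its tail rather than from its head, which leaves this depth-first structure intact (in particular, when we later meet $(v,vu)$ we simply return to $u$, exactly as one leaves a subtree in a depth-first search). Root $H$ at $b_0$. If $d$ is \emph{not} an ancestor of $s$ in this rooted forest, then by the time the walk is processing the (non-parent) edge $e$ at $s$, the entire subtree of $H$ hanging below $d$ has already been traversed and left; in particular the node-edge pair at $d$ belonging to $e$ --- namely $(x,xy)$ in the ordinary case, $(v,vu)$ in the exceptional one --- has already been current. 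In the ordinary case this contradicts the rule that $e$ is inserted at its head $y$, which requires $(x,xy)$ not yet current; in the exceptional case it contradicts the hypothesis that $(u,uv)$ precedes $(v,vu)$.

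It then remains to treat the case where $d$ is an ancestor of $s$, which is where semi-balancedness of $G$ enters, exactly as in Lemma~\ref{l:greedy_tree_is_a_tree}. Here the path $P$ runs from child to parent at every step. Every \emph{non}-special edge of $H$ points from its child endpoint to its parent endpoint, because it was inserted at its head, which is the parent; the special edge, being the unique edge of $H$ inserted at its tail, points from parent to child. Reading the edges of $C$ around the cycle, all of them except possibly the special edge point the same way; and $e$ itself points ``with'' the non-special edges in the ordinary case and ``against'' them in the exceptional case, so in every case $C$ has at most one edge disagreeing in cyclic direction with the rest. Since $G$ is semi-balanced, $C$ would then have to consist of exactly two edges, which is impossible as $\cG$ is simple and there is only one edge of $G$ between the two endpoints of $e$. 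This contradiction shows $H$ is acyclic, hence a tree.

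The main obstacle, as I see it, is making the depth-first observation precise enough --- robust to the single tail-descended edge --- to justify the phrase ``the subtree below $d$ has already been traversed and left,'' and to check carefully that the special edge, which may occur anywhere along $P$ or already inside $H$, is handled consistently both in the ``$d$ not an ancestor'' bookkeeping and in the orientation count around $C$. Everything else is a direct adaptation of the proof of Lemma~\ref{l:greedy_tree_is_a_tree}.
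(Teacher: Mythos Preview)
Your argument is correct. The paper's proof does not split into the ``$d$ is an ancestor of $s$'' versus ``not'' cases; instead it argues directly that since $|C|\ge 4$, at least two edges of $C$ oppose the cyclic direction of the edge being inserted, so at least one of them is a \emph{regular} (non-special) edge, and from that regular opposing edge the inductive propagation of Lemma~\ref{l:greedy_tree_is_a_tree} carries over essentially verbatim to conclude that the node-edge pair at $d$ for $e$ was already current. Your route trades this propagation for the structural observation that every non-special tree edge points from child to parent, so when $d$ is an ancestor of $s$ the cycle has at most one edge in the minority cyclic direction and semi-balancedness forces $|C|=2$. This is a tidier use of semi-balancedness, bought at the price of the separate depth-first bookkeeping you correctly flag for the non-ancestor case; the paper's propagation handles both situations at once but is a bit more opaque about why the single special edge does not disrupt the induction along the cycle.
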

\begin{proof}
	The proof of Lemma \ref{l:greedy_tree_is_a_tree} carries over with a few modifications. Once again suppose for a contradiction that a cycle $C$ appears, and stop the process when it first happens. Suppose again that in this moment $(y,yx)$ is the current edge (that is, we include $xy$ into $H$ and this creates the cycle $C$). Once again denote the vertices of $C$ by $y=v_0, v_1, \dots v_k= x$.
	
	As $G$ does not contain multiple edges, $|C|\geq 4$. If $xy \neq uv$ then $xy$ needs to have orientation $\overrightarrow{xy}$, as we put it into $H$ when $y$ is the current node.
	There must be at least 2 edges in $C$ standing opposite to $\overrightarrow{xy}$, and hence these edges are oriented as $\overrightarrow{v_{i+1}v_{i}}$. One of these might by $uv$, but at least one of them has to be a ``regular'' edge. For that edge, we can repeat the argument of the proof of Lemma \ref{l:greedy_tree_is_a_tree}, and obtain that $(x,xy)$ was current before $(y,xy)$, which is a contradiction, as in such a case we would not put $xy$ into $H$.
	
	If $xy = uv$, then it might be oriented as $\overrightarrow{yx}$. As $|C|\geq 4$, we still need to have some other edge $v_iv_{i+1}$ oriented as $\overrightarrow{v_{i+1}v_{i}}$. In this case, we also conclude that $(x,xy)$ was current before $(y,xy)$ which is still a contradiction, since in this case we would already have included $xy$ into $H$ at its endpoint $x$.
	%By the same argument as in Lemma \ref{l:greedy_tree_is_a_tree}, while traversing $C$, we cannot traverse any edge except $uv$ from the tail direction. But then $C$ has at most one edge going in one direction. Hence $C$ can only be semi-balanced if it has length 2, but as we only consider simple graphs, this is impossible.
\end{proof}

    \begin{figure}
    	\begin{center}    
    		\begin{tikzpicture}[-,>=stealth',auto,scale=0.5,thick]
    		\tikzstyle{o}=[circle,scale=0.6,fill,draw]
    		\node[o, label=left:$y$] (0) at (0, 0) {};
    		\node[o, label=left:$x$] (1) at (-2, 2) {};
    		\node[o] (2) at (0, 2) {};
    		\node[o, label=right:$a$] (3) at (2, 2) {};
    		\node[o] (4) at (-1, 4) {};
    		\node[o] (5) at (1, 4) {};
    		\node[o] (6) at (3, 4) {};
    		\node[o] (7) at (2, 6) {};
    		\path[->,line width=0.7mm]
    		(1) edge node [above] {} (0)
    		(2) edge node [below] {} (0)
    		(3) edge node [above] {} (0)
    		(4) edge node [below] {} (2)
    		(5) edge node [below] {} (3)
    		(6) edge node [below] {} (3)
    		(7) edge node [below] {} (6);
    		\path[->,line width=0.2mm]
    		(4) edge node [below] {} (1)
    		(5) edge node [below] {} (2)
    		(7) edge node [below] {} (5);
    		\path[-,dotted]
    		(4) edge node [below] {} (5);
    		\end{tikzpicture}
    		\hspace{0.3cm}
    		\begin{tikzpicture}[-,>=stealth',auto,scale=0.5,thick]
    		\tikzstyle{u}=[circle]
    		\tikzstyle{o}=[circle,scale=0.6,fill,draw]
	        \draw[rotate=-25] (-2.75, 2.2) ellipse (1.1 and 2.3);
	        \draw[rotate=-25] (-0.65, 5.2) ellipse (1 and 2.1);
    		\node[u] (e) at (-1.2, 5.8) {$S_x^d$};
    		\node[u] (e) at (3.6, 6.5) {$S_x^h$};
    		\node[o, label=left:$y$] (0) at (0, 0) {};
    		\node[o, label=left:$x$] (1) at (-2, 2) {};
    		\node[o%, label=left:$b$
    		] (2) at (0, 2) {};
    		\node[o, label=right:$a$
    		] (3) at (2, 2) {};
    		\node[o%, label=left:$d$
    		] (4) at (-1, 4) {};
    		\node[o%, label=right:$e$
    		] (5) at (1, 4) {};
    		\node[o%, label=right:$f$
    		] (6) at (3, 4) {};
    		\node[o%, label=right:$g$
    		] (7) at (2, 6) {};
    		\path[->,every node/.style={font=\sffamily\small}]
    		(1) edge node [above] {} (0)
    		(2) edge node [below] {} (0)
    		(3) edge node [above] {} (0)
    		(4) edge node [below] {} (1)
    		(4) edge node [below] {} (2)
    		(5) edge node [below] {} (2)
    		(5) edge node [below] {} (3)
    		(6) edge node [below] {} (3)
    		(7) edge node [below] {} (5)
    		(7) edge node [below] {} (6);
    		\path[-,dotted,every node/.style={font=\sffamily\small}]
    		(4) edge node [below] {} (5);
    		\node[u] (e) at (-1.4, 0.9) {$e$};
    		\end{tikzpicture}
    		\begin{tikzpicture}[-,>=stealth',auto,scale=0.5,thick]
    		\tikzstyle{u}=[circle]
    		\tikzstyle{o}=[circle,scale=0.6,fill,draw]
    		\node[o, label=left:$y$] (0) at (0, 0) {};
    		\node[o, label=left:$x$] (1) at (-2, 2) {};
    		\node[o] (2) at (0, 2) {};
    		\node[o, label=right:$a$] (3) at (2, 2) {};
    		\node[o] (4) at (-1, 4) {};
    		\node[o] (5) at (1, 4) {};
    		\node[o] (6) at (3, 4) {};
    		\node[o] (7) at (2, 6) {};
    		\path[->,line width=0.2mm,]
    		(2) edge node [below] {} (4);
    		\path[->,line width=0.7mm]
    		(0) edge node [above] {} (1)
    		(2) edge node [below] {} (0)
    		(3) edge node [above] {} (0)
    		(4) edge node [below] {} (1)
    		(5) edge node [below] {} (4)
    		(6) edge node [below] {} (3)
    		(7) edge node [below] {} (5);
    		\path[-,dotted,every node/.style={font=\sffamily\small}]
    		(5) edge node [below] {} (2)
    		(5) edge node [below] {} (3)
    		(7) edge node [below] {} (6);
    		\node[u] (e) at (-1.4, 0.9) {$e$};
    		\end{tikzpicture}
    	\end{center}
\caption{We consider the ribbon structure induced by the positive 
orientation of the plane, with basis $(y,ya)$. \\
Left panel: The first facet graph $G_1$ of the underlying undirected graph (hidden edges 
are dotted), and its greedy tree (thick edges). \\
Middle panel: The 
sets $S_x^d$ and $S_x^h$ for the edge $\protect\overrightarrow{e} = 
\protect\overrightarrow{xy}$, as defined in the proof of Lemma 
\ref{l:one-stick_trees}. \\
Right panel: The orientation obtained for 
$xy$ in Case 3 of the proof of Lemma \ref{l:one-stick_trees}, and 
the $\protect\overrightarrow{yx}$ - almost greedy tree (thick edges). }
\label{fig:greedy_and_almost_greedy_trees}
    \end{figure} 

\begin{proof}[Proof of Lemma \ref{l:one-stick_trees}]
	\textbf{Case 1:} Take an edge $\overrightarrow{e}\in T_1$. We show that $\overrightarrow{e}$ cannot be the unique tail-edge in a Jaeger tree (of and facet graph). Indeed, as $T_1$ has no tail-edges, Lemma \ref{l:one-stick_stronger} applied to $\{\overrightarrow{e}\}$ tells us that there is no $\overrightarrow{e}$-stick Jaeger tree.
	
	%If there were an $\overrightarrow{e}$-stick Jaeger tree $T$ in $G_1$, then 
	%(by the greedy property of $T_1$)
	%the first difference of $T$ from $T_1$ can only be that an edge in $G_1-T_1$ is included into $T$, and since $T_1$ is a Jaeger tree, that edge is necessarily seen at its tail. Moreover, it is different from $\overrightarrow{e}$ as $\overrightarrow{e}\in T_1$. Hence $\overrightarrow{e}$ cannot be the only edge seen at its tail in a Jaeger tree $T$ of $G_1$.
	
	%Also, $\overrightarrow{e}$ cannot be the only edge seen at its tail in a Jaeger tree $T$ of a different facet graph $G$, either. Indeed, as in this case $f(G_1)>f(G)$, by Lemma \ref{l:nonfirst_orientation_exits_dircut} there exist a cut $(V_0, V_1)$ such that $b_0\in V_0$ and in $G_1(V_0,V_1)$ each edge is either not present or points from $V_1$ to $V_0$ while in $G(V_0,V_1)$ each edge is either not present or points from $V_0$ to $V_1$.
	%Any Jaeger tree $T$ of $G$ contains at least one edge from $G(V_0,V_1)$, and the edge that is first reached from $G(V_0,V_1)$ will be seen at its tail. As $\overrightarrow{e}$ is oriented the same way in $G$ and $G_1$, it cannot be in $G(V_0,V_1)$. Hence there must be at least one more edge in $T$ reached at its tail, thus $\overrightarrow{e}$ cannot be the only one.

    \textbf{Case 2:} Let $\overrightarrow{e}\in G_1-T_1$. 
    
    Lemma \ref{l:at_most_one_sticky_Jaeger_tree} tells us that we cannot have more than one $\overrightarrow{e}$-stick Jaeger tree. 
    Take the $\overrightarrow{e}$-almost greedy tree in $G_1$, and call it $T$. We show that $T$ is an $\overrightarrow{e}$-stick Jaeger tree. 
    
    By Lemma \ref{l:almost_greedy_tree}, this is a tree. Also, we claim that $\overrightarrow{e}\in T$ and $\overrightarrow{e}$ is a tail-edge of $T$. Indeed, since $T_1$ is the greedy tree of $G_1$, and $\overrightarrow{e}\notin T_1$, %a Jaeger tree, in the tour of $T_1$, we follow the
    in the greedy tree process, at some point we reach $\overrightarrow{e}$ at its tail (and do not include it to $T_1$) before seeing $\overrightarrow{e}$ from its head. Hence in the $\overrightarrow{e}$-almost greedy tree in $G_1$, $\overrightarrow{e}$ will be included so that it is reached at its tail. We also claim that $T$ is spanning. Indeed, the only way for not reaching some vertices would be if they are separated from $b_0$ by a cut directed away from the part of $b_0$, but there is no such cut in $G_1$.
    
    Notice that the tour of $T$ agrees with the way we traversed the graph when building up $T$. In that process, we traversed each edge that was first seen from the head direction. Hence $T$ is a Jaeger tree. Moreover, by construction, $\overrightarrow{e}$ is the only tail-edge, hence $T$ is indeed an $\overrightarrow{e}$-stick Jaeger tree. 
    
    %We have proved that the $\overrightarrow{e}$-almost greedy tree in $G_1$ is an $\overrightarrow{e}$-stick Jaeger tree. Now we show that there are no other $\overrightarrow{e}$-stick Jaeger trees.
    
    %We show that in $G_1$, $T$ is the only $\overrightarrow{e}$-stick Jaeger tree. Suppose for a contradiction that there is another $\overrightarrow{e}$-stick Jaeger tree $T'$ in $G_1$. As $\overrightarrow{e}\in T\cap T'$, the first difference between $T$ and $T'$ needs to be that an edge $\overrightarrow{f}$ is input to one of the trees and not into the other. As $T$ and $T'$ are both Jaeger trees, $\overrightarrow{f}$ must be first seen at its tail. As $\overrightarrow{e}$ is the only tail-edge of $T$, we need to have $\overrightarrow{f}\in T'$, hence $T'$ has at least 2 tail-edges.
    
    %To show that there is no $\overrightarrow{e}$-stick Jaeger tree in a different facet graph $G$, we can repeat word-by-word the previous proof that for an edge $\overrightarrow{f}\in T_1$ there is no $\overrightarrow{f}$-stick Jaeger tree in $G$.
    
    \textbf{Case 3:}
    Now we show that for an edge $\overrightarrow{e}\in G_1$, (no matter whether $\overrightarrow{e}$ is in $T_1$ or not) there is exactly one $\overleftarrow{e}$-stick Jaeger tree for $\cG$. Lemma \ref{l:at_most_one_sticky_Jaeger_tree} tels us that there is at most one.
    To show that there is at least one $\overleftarrow{e}$-stick Jaeger tree, we need to find out which orientation should contain this Jaeger tree. To get this orientation, take the following layering:
    Let $l$ be the layering of $G_1$ (that is, $l(v)$ is the distance in $\cG$ from $b_0$). We would like to modify this so that $\overrightarrow{e}=\overrightarrow{xy}$ is reversed. Take $l'(v)=l(v)$ for $v\neq x$ and $l'(x)=l(x)+2$. This will cause $xy$ to get reversed, but it might not be an appropriate layer function. Indeed, if there was any edge $\overrightarrow{wx}$ pointing to $x$ in $G_1$, then now $l'(x)-l'(w)=3$. Also, if there was a hidden edge $zx$ incident to $x$ then we have $l'(x)-l'(z)=2$. Hence we need to increase $l'$ by 2 on the in-neighbors of $x$, and on the in-neighbors of those, etc. And we need to increase $l'$ by 1 on the vertices that are connected by a hidden edge to a node on which $l$ was increased by 2, and also on their in-neighbors, etc. 
    More formally, let $$S^d_x=\{v\in V: \text{ $x$ is reachable on a directed path from $v$ in $G_1$}\},$$ and let 
    \begin{align*}
    S^h_x=\{v\in V: \exists u \in V \text{ such that $u$ is reachable from $v$ on a directed}\\ \text{ path in $G_1$ and $u$ is connected to $S^d_x$ by a hidden edge}\}.
    \end{align*}
    For an example, see the middle panel of Figure \ref{fig:greedy_and_almost_greedy_trees}. Then let 
    $$
    l''(v) = \left\{\begin{array}{cl} 
    l(v)+2 & \text{if $v\in S^d_x$}  \\
    l(v)+1  & \text{if $v \in S^h_x$}\\
    l(v)  & \text{otherwise}
    \end{array} \right..
    $$
    %$l(v)+2$ for $v\in S^d_x$, $l''(v)=l(v)+1$ for $v \in S^h_x$ and $l''(v)=l(v)$ otherwise. 
    It is easy to check that $l''$ is a good layering, that is, $|l''(u)-l''(v)|\leq 1$ for each edge $uv \in E(\cG)$. %Indeed, as $l$ was a good layering, the only way that $l''$ can go wrong is if $l''(u)=l(u)+2$, $l''(v)=l(v)$ and $l(u)-l(v)=1$, but then the edge $uv$ points from $v$ to $u$ in $G_1$. Hence if $x$ is reachable from $u$ then it is also reachable from $v$, thus, we should have increased the layer of $v$ as well. 
    Also, we claim that $l''(y)=l(y)$, thus, $xy$ indeed gets reversed. Indeed, each vertex in $v\in S^d_x$ has $l(v)<l(y)$, and each vertex in $u\in S^h_x$ has $l(u)\leq l(v)$ for some $v\in S^d_x$, hence $y\notin S^d_x\cup S^h_x$.
    
    Hence $l''$ defines a semi-balanced subgraph $G$ in which $xy$ is oriented as $\overrightarrow{yx}$. (For an example, see the right panel of Figure \ref{fig:greedy_and_almost_greedy_trees}.) We show that $G$ has a Jaeger tree where $\overrightarrow{yx}$ is the only tail-edge. For this, let us take the $\overrightarrow{yx}$-almost greedy tree $T$ in $G$. We need to show that $T$ is spanning. Then by construction it also follows that it is a Jaeger tree. Also, we need to show that $T$ indeed contains $\overrightarrow{yx}$, reached at its tail.
    
    Notice that $(V-(S^d_x\cup S^h_x), S^d_x\cup S^h_x)$ defines a directed cut in $G$: all edges of $\cG(V-(S^d_x\cup S^h_x), S^d_x\cup S^h_x)$ are either not present, or point from $V-(S^d_x \cup S^h_x$ to $S^d_x \cup S^h_x$. %Notice also, that (by the definition of $S^d_x$ and $S^h_x$) this is also a directed cut in $G_1$, where each edge is either not present or points from $S^d_x \cup S^h_x$ to $V-(S^d_x \cup S^h_x)$. 
    %The only difference between $G_1$ and $G$ is that the edges of this cut are reversed {\color{red} ez a mondat nem marad igaz, de nezzuk meg hogy mire kell. Az minden esetre igaz, hogy a vagason kivul meg az $S^d_x$ es az $S^h_x$ kozt van valtozas, de ott csak ujonnan keletkezett elek jonnek be, el nem tunnek.}. 
    In $G$, all edges of $\cG$ between $S^h_x$ and $S^d_x$ point toward $S^d_x$. As by the definition of $S^h_x$, each vertex of $S^h_x$ is connected to some vertex of $S^d_x$, we conclude that in $G$, $x$ is reachable from each vertex of $S^d_x\cup S^h_x$.
    
    Also, $b_0\in V-(S^d_x\cup S^h_x)$ as $b_0$ is a sink in $G_1$. Hence the almost greedy tree process will start at $V-(S^d_x\cup S^h_x)$ and as it can only get through the cut $G(V-(S^d_x\cup S^h_x),S^d_x\cup S^h_x)$ via $\overrightarrow{yx}$, it will eventually reach $\overrightarrow{yx}$ at $y$. Thus, it also traverses $\overrightarrow{yx}$. Now from each vertex, either $x$ or $b_0$ is reachable in $G$. Hence $T$ spans $G$.
    We conclude that $T$ is a Jaeger tree of $G$, and by construction $\overrightarrow{yx}$ is the only tail-edge of $T$.
    
    \textbf{Case 4:}
    Let $e\in\cG$ be an edge that is not present in $G_1$. We show that there is exactly one $\overrightarrow{e}$-stick Jaeger tree, and exactly one $\overleftarrow{e}$-stick Jaeger tree. Once again, Lemma \ref{l:at_most_one_sticky_Jaeger_tree} implies the ``at most one'' part.

    Let $e=xy$. The fact that $xy$ is hidden in $G_1$ means that $l(x)=l(y)$.
    By symmetry, it is enough to show that there is a $\overrightarrow{yx}$-stick Jaeger tree. For this, take $l'(x)=l(x)+1$, while $l'(v)=l(v)$ for $v\in V-x$. This might not be a good layering, because if $x$ has any in-edge $\overrightarrow{wx}$, then $l'(x)-l'(v)=2$. Hence we need to increase $l'$ by one on the vertices from where $x$ is reachable on a directed path. Let $S_x=\{v\in V: \text{ $x$ is reachable on a directed path from $v$ in $G_1$}\}$, and let $l''(v)=l(v)+1$ for $v\in S_x$ and $l''(v)=l(v)$ for $v\notin S_x$. Then $l''$ is a good layering, moreover, for the obtained facet graph $G$, $G(V-S_x, S_x)$ is a cut where each edge is either not present or oriented from $V-S_x$ to $S_x$. Now we can repeat the proof of the previous case.
\end{proof}

\begin{proof}[Proof of Theorem \ref{thm:gamma(1)=2g}]
    As we argued after the statement of Theorem \ref{thm:gamma(1)=2g}, it is enough to prove
	 $|\jaeg_1(\cG)|=2|E|-|V|+1$. This follows from Lemma \ref{l:one-stick_trees}, hence we are ready.
\end{proof}

\begin{problem}
	Give a combinatorial meaning for $\gamma(i)$ for larger values of $i$.
\end{problem}

\section{On the connection of $\gamma$-polynomials and interior polynomials}
\label{sec:connection_of_gamma_and_interior}

Let $\cG$ be an undirected graph, and $G$ one of its facet graphs. In \cite{semibalanced}, the $h^*$-vector of the root polytope of $G$ (which is a facet of $P_\cG$) is called the interior polynomial of $G$.

If $\cG$ is a bipartite graph, then there are two special facet graphs, namely, the standard orientations of $\cG$, where each edge is oriented from partite class $U$ to partite class $W$, or vice versa. The root polytopes of these two orientations are mirror images of each other, hence the interior polynomials of the two standard orientations are the same. We call this polynomial the interior polynomial of $\cG$, and denote it by $I_\cG$. (We note that in fact interior polynomials were originally defined for undirected bipartite graphs, and the original definition is equivalent to the one given here.)

In this section, we gather results that suggest interesting connections between $\gamma_\cG$ and the interior polynomial $I_\cG$, and also formulate some conjectures.

First of all, for a special class of graphs, Ohsugi and Tsuchiya proved that $\gamma_\cG$ can be written as a linear combination of interior polynomials of come cuts in $\cG$.

For a bipartite graph $\cG$, they define $\tilde{\cG}$ as the bipartite graph where a new vertex is added to both partite classes, and the new vertices are connected to each vertex of the other partite class (including the other new vertex). For a graph $\cG$, $Cut(G)$ is the set of cuts of $\cG$, taken as subgraphs ($|Cut(\cG)|=2^{|V|-1}$, each $S\subset V$ determines the cut $(S,V-S)$ with $S$ and $V-S$ determining the same cut).
They show 

\begin{thm}\cite[Corollary 5.5]{OT}
    For a bipartite graph $\cG$, 
    $$\gamma_{\tilde{\cG}}(x) = \frac{1}{2^{|V|-2}}\sum_{\mathcal{H}\in Cut(\cG)} I_{\tilde{\mathcal{H}}}(4x).$$
\end{thm}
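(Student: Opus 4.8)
The natural plan is to re-derive the identity from the Jaeger-tree formula of Theorem~\ref{cor:h^*_of_sym_edge_poly} applied to $\tilde\cG$, taking advantage of the rigidity the construction $\cG\mapsto\tilde\cG$ imposes on its facets. Since $\tilde\cG$ is connected and bipartite, the corollary following Theorem~\ref{thm:semibalanced_graphs_char} identifies its facet graphs with the semi-balanced orientations of $\tilde\cG$, each governed by a layering $l$ unique up to an additive constant. Write $u_0$ and $w_0$ for the two dominating vertices and normalise $l(u_0)=0$; because $u_0$ is adjacent to all of $W\cup\{w_0\}$ and $w_0$ to all of $U\cup\{u_0\}$, the layering can take only the four consecutive values $\{-1,0,1,2\}$ once we also fix $l(w_0)=1$ (the choice $l(w_0)=-1$ records the orientation-reversed facets). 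A facet graph of $\tilde\cG$ is therefore encoded by the pair $(A,B)$ of sets sent to the outer levels $2$ and $-1$, with $A\subseteq U$, $B\subseteq W$ and the only constraint being $\cG[A,B]=\varnothing$; equivalently, $(A,B)$ ranges exactly over the independent sets $S=A\cup B$ of $\cG$, and the sub-orientation on the two middle levels $\{0,1\}$ is an orientation of $\widetilde{\cG\setminus S}$. Thus $h^*_{P_{\tilde\cG}}$ is the sum, over all independent sets $S$ (and the two level-parities), of the tail-edge generating polynomial of the Jaeger trees of the corresponding facet, and the task is to show that this sum, once put in $\gamma$-form, regroups into the cut-indexed sum in the statement.

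The mechanism I would try to establish is a factorisation of the tail-edge statistic on the Jaeger trees of the facet $(A,B)$, for a ribbon structure on $\tilde\cG$ adapted to the level decomposition with base $(u_0,u_0w_0)$, into (i) a ``core'' contribution from the middle levels, equal to the generating polynomial that computes the interior polynomial of $\widetilde{\cG\setminus S}$, and (ii) a contribution from the outer-level vertices, each of which the tour visits in a forced pattern --- much as the greedy and almost-greedy trees in the proof of Theorem~\ref{thm:gamma(1)=2g} are forced --- and so supplies one independent two-way choice of whether its incident tree edge is a tail-edge. After the palindromic repackaging $p(x)=(x+1)^{d}\sum_i\gamma_i\bigl(x/(x+1)^2\bigr)^i$ defining $\gamma$, with $d=\dim P_{\tilde\cG}=|V|+1$, the binomial factor from (ii) is precisely what promotes a core polynomial from $I(x)$ to $I(4x)$, the $4$ being the constant built into the substitution $x\mapsto x/(x+1)^2$. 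Summing over all facets and matching the cores with the graphs $\tilde{\mathcal H}$ for $\mathcal H\in Cut(\cG)$ should then produce the claimed formula, with the denominator absorbing the multiplicity with which each cut arises.

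The step I expect to be the real obstacle is exactly this matching. Forgetting the outer-level vertices sends a facet of $\tilde\cG$ to $\widetilde{\cG\setminus S}$, which is the $\tilde{\cdot}$-construction applied to the induced subgraph $\cG\setminus S$ on a vertex cover, not to a cut of $\cG$, and the resulting map to $Cut(\cG)$ is neither injective nor surjective; one must show that after the $\gamma$-repackaging the facet contributions reorganise into $\tfrac{1}{2^{|V|-2}}\sum_{\mathcal H}I_{\tilde{\mathcal H}}(4x)$, which in particular requires an identity turning a sum of interior polynomials indexed by independent sets into one indexed by cuts, and requires controlling the effect of the ``extra'' middle-level edges on the core count (they do change the facet's root polytope, so this is not automatic). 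Rather than forcing a term-by-term bijection, I would aim for a generating-function comparison: compute $h^*_{P_{\tilde\cG}}(t)$ from the tail-edge distribution, expand $(1+t)^{|V|+1}\sum_{\mathcal H}I_{\tilde{\mathcal H}}\!\bigl(4t/(1+t)^2\bigr)$ using the facet analysis above, and check equality coefficientwise in the palindromic basis $\{t^i(1+t)^{|V|+1-2i}\}$ up to the overall constant; since both sides are intrinsic (independent of the chosen ribbon structure), this independence is a useful internal consistency check. The remaining work --- proving the independence of the outer-level choices, identifying the binomial they contribute, matching independent-set sums with cut sums, and fixing the normalising constant --- is the routine-looking bookkeeping I would not carry out in a sketch.
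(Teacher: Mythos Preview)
The paper does not prove this statement at all: it is quoted verbatim from Ohsugi--Tsuchiya \cite[Corollary~5.5]{OT} as background for the conjectures in Section~\ref{sec:connection_of_gamma_and_interior}, with no argument supplied. In \cite{OT} the proof proceeds via Gr\"obner bases and the resulting regular unimodular triangulation, not via Jaeger trees. So there is no ``paper's own proof'' to compare your proposal to.

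What you have written is an outline of a \emph{new} proof using the machinery of the present paper. Your description of the facet graphs of $\tilde\cG$ is correct: normalising $l(u_0)=0$ and $l(w_0)=1$, the layering takes values in $\{-1,0,1,2\}$ and is determined by the pair $(A,B)=(l^{-1}(2),l^{-1}(-1))$, with the sole constraint that $A\cup B$ be independent in $\cG$. But the step you flag as ``the real obstacle'' is a genuine gap, not routine bookkeeping. The core of a facet indexed by $S=A\cup B$ is $\widetilde{\cG\setminus S}$, the $\tilde{\cdot}$-construction applied to an induced subgraph on a vertex cover; the right-hand side of the theorem involves $\tilde{\mathcal H}$ for cuts $\mathcal H$ of $\cG$. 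These are different families of graphs, with different interior polynomials in general, and no identity is offered that would convert a sum over independent sets into a sum over cuts with the correct multiplicity $2^{|V|-2}$. Likewise, the claimed factorisation of the tail-edge statistic into a ``core'' contribution equal to $I_{\widetilde{\cG\setminus S}}$ times an independent binomial from the outer-level vertices is asserted but not established; the outer vertices are adjacent to many middle-level vertices, so their tree edges interact with the core and the independence is not automatic. Until both of these points are settled the sketch is a plausible strategy rather than a proof.
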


This means that for bipartite graphs where each partite class has a vertex that is connected to each vertex of the other partite class, the gamma polynomial can be written as a positive linear combination of interior polynomials of some subgraphs. As each coordinate of an interior polynomial is nonnegative, this implies $\gamma$-positivity for these graphs, as \cite{OT} points out. (We note that in \cite[Theorem 5.3]{OT} they give another similar formula for (not necessarily bipartite) graphs where some vertex is connected to any other vertex.)

It would be interesting to see if a similar formula is true for all bipartite graphs. Here we list some more results suggesting that the two polynomials should be strongly related in general, and we formulate some conjectures.

One thing that suggests a connection is that certain product formulas are true for both polynomials.
Ohsugi and Tsuchiya proved the following product formula for $\gamma$-polynomials: 
\begin{thm}\cite[Corollary 5.5]{OT}
%\cite[Corollary 4.18]{DDM19}
Let $\mathcal{G}_1$ and $\mathcal{G}_2$ be bipartite graphs with $V(\mathcal{G}_1)\cap V(\mathcal{G}_2)=\{u,w\}$, so that $E(\mathcal{G}_1)\cap E(\mathcal{G}_2)=\{uw\}$. Let $\mathcal{G}_1\cup \mathcal{G}_2$ be the bipartite graph glued along the edge $uw$.
Then $\gamma_{\mathcal{G}_1 \cup \mathcal{G}_2}=\gamma_{\mathcal{G}_1}\cdot \gamma_{\mathcal{G}_2}$.
\end{thm}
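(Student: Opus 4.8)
The plan is to reduce the statement to a product formula for $h^*$-polynomials and then to prove that via a weight-preserving bijection between Jaeger trees, using the machinery of Sections \ref{sec:shelling_orders} and \ref{sec:gamma_1}. Write $\mathcal{G}=\mathcal{G}_1\cup\mathcal{G}_2$. Since $\mathcal{G}_1,\mathcal{G}_2$ are bipartite and share the edge $uw$, the glued graph $\mathcal{G}$ is again bipartite, and $|V(\mathcal{G})|=|V(\mathcal{G}_1)|+|V(\mathcal{G}_2)|-2$, so by \cite{OT} $\deg h^*_{P_\mathcal{G}}+1=\deg h^*_{P_{\mathcal{G}_1}}+\deg h^*_{P_{\mathcal{G}_2}}$. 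All three $h^*$-polynomials are palindromic, and directly from the definition of the $\gamma$-transform one gets $\gamma_{pq}=\gamma_p\gamma_q$ and $\gamma_{(1+t)p}=\gamma_p$ for palindromic $p,q$. Hence $\gamma_{\mathcal{G}_1\cup\mathcal{G}_2}=\gamma_{\mathcal{G}_1}\gamma_{\mathcal{G}_2}$ is equivalent to $(1+t)\,h^*_{P_\mathcal{G}}(t)=h^*_{P_{\mathcal{G}_1}}(t)\cdot h^*_{P_{\mathcal{G}_2}}(t)$, which by Theorem \ref{cor:h^*_of_sym_edge_poly} reads
$$(1+t)\sum_{T\in\jaeg(\mathcal{G})}t^{\,\#\{\text{tail-edges of }T\}}=\Big(\sum_{T_1\in\jaeg(\mathcal{G}_1)}t^{\,\#\{\text{tail-edges of }T_1\}}\Big)\Big(\sum_{T_2\in\jaeg(\mathcal{G}_2)}t^{\,\#\{\text{tail-edges of }T_2\}}\Big).$$
So it suffices to exhibit a bijection $\jaeg(\mathcal{G}_1)\times\jaeg(\mathcal{G}_2)\longleftrightarrow\jaeg(\mathcal{G})\times\{0,1\}$ under which the number of tail-edges of a pair $(T_1,T_2)$ equals that of the associated $T$ plus the $\{0,1\}$-label.

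To set this up I would fix a ribbon structure and basis on $\mathcal{G}$ with base node $b_0:=u$ and base edge $uw$, arranged so that at each of $u$ and $w$ the edges of $\mathcal{G}_1$ and the edges of $\mathcal{G}_2$ occur in two cyclically consecutive blocks with $uw$ at the seam; restricting this data gives a ribbon structure and basis on $\mathcal{G}_1$ (base $(u,uw)$) and on $\mathcal{G}_2$. With this choice, the layering description of Theorem \ref{thm:facets_of_P_G} identifies the facet graphs of $\mathcal{G}$ with pairs $(G_1,G_2)$ of facet graphs of $\mathcal{G}_1,\mathcal{G}_2$ inducing the same orientation on $uw$; and the Bernardi tour of a spanning tree $T$ of $\mathcal{G}$ decomposes into the tour of $T\cap\mathcal{G}_1$ in $\mathcal{G}_1$ together with the tour of the corresponding tree in $\mathcal{G}_2$, because once the walk enters one side through $u$ or $w$ it cannot leave it until that side's tour is finished. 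Consequently $T$ is a Jaeger tree of $G_1\cup G_2$ exactly when its two ``halves'' — obtained by adjoining $uw$ to whichever of $T\cap\mathcal{G}_1$, $T\cap\mathcal{G}_2$ does not already contain it — are Jaeger trees of $G_1$ and $G_2$. This gluing gives a bijection between $\jaeg(\mathcal{G})$ and the set of pairs $(T_1,T_2)$ in which $uw$ is oriented the same way in both facet graphs and lies in at least one of $T_1,T_2$.

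The bijection just described accounts for only ``half'' of $\jaeg(\mathcal{G}_1)\times\jaeg(\mathcal{G}_2)$; the leftover pairs are those in which $uw$ is oriented oppositely in the two facet graphs. To finish, I would match each $T\in\jaeg(\mathcal{G})$ with a second preimage, obtained by reversing the orientation of $uw$ in one of the two half-facet-graphs and, if necessary, replacing the half-tree there by the one produced by the (almost-)greedy construction of Section \ref{sec:gamma_1} in the new facet graph; the reversal is legitimate by the layering surgery of Lemmas \ref{l:reversing_cut_pointing_away}, \ref{l:edge_in_or_out_in_two_facets} and \ref{l:edge_ordered_differently_in_two_facets}, precisely as in the proof of Lemma \ref{l:one-stick_trees}. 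The point to verify — and the step I expect to be the main obstacle — is that passing between the two preimages of $T$ changes $\#\{\text{tail-edges of }T_1\}+\#\{\text{tail-edges of }T_2\}$ by exactly $1$: every tail-edge other than $uw$ is unaffected, whereas $uw$ contributes a tail-edge on precisely one of the two sides and for precisely one of its two orientations. Summing $t^{\#\text{tail-edges}}$ over the two preimages of each $T$ then yields the factor $(1+t)$, hence the $h^*$-identity and the claim. The delicate book-keeping here — showing the gluing map lands in Jaeger trees, identifying the second preimage, and tracking the $\pm1$ contribution of $uw$ — is where all the work is; it is exactly the mechanism by which the shared edge forces the factor $(1+t)$.

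As a consistency check (and an alternative, less self-contained route), one can argue geometrically: $P_\mathcal{G}=\conv(P_{\mathcal{G}_1}\cup P_{\mathcal{G}_2})$, the intersection $P_{\mathcal{G}_1}\cap P_{\mathcal{G}_2}$ is the unimodular segment $S=\conv\{\mathbf{u}-\mathbf{w},\mathbf{w}-\mathbf{u}\}$, and the affine hulls of $P_{\mathcal{G}_1}$ and $P_{\mathcal{G}_2}$ meet transversally along the line spanned by $S$ (the dimensions $|V(\mathcal{G}_1)|-1$ and $|V(\mathcal{G}_2)|-1$ add up to $\dim\aff P_\mathcal{G}+1$). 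For such an amalgam the Ehrhart series multiply up to the common part, $\ehr_{P_\mathcal{G}}(t)\cdot\ehr_{S}(t)=\ehr_{P_{\mathcal{G}_1}}(t)\cdot\ehr_{P_{\mathcal{G}_2}}(t)$, and since $h^*_S(t)=1+t$ this recovers the same $h^*$-identity. I would nevertheless prefer the combinatorial argument above, as it stays within the elementary framework of this paper and makes the role of $uw$ explicit.
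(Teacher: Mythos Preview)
The paper does not give its own proof of this theorem: it is simply quoted from \cite{OT} (Corollary~5.5 there) as background for Section~\ref{sec:connection_of_gamma_and_interior}. So there is no proof in the paper to compare against; I will therefore comment on your proposal on its own merits.

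Your reduction is sound: palindromicity plus the degree count $|V(\mathcal G)|-1=(|V(\mathcal G_1)|-1)+(|V(\mathcal G_2)|-1)-1$ does make the $\gamma$-identity equivalent to $(1+t)\,h^*_{P_{\mathcal G}}=h^*_{P_{\mathcal G_1}}h^*_{P_{\mathcal G_2}}$, and the geometric route at the end (transversal amalgam along the unimodular segment $S$, hence $\ehr_{P_{\mathcal G}}\cdot\ehr_S=\ehr_{P_{\mathcal G_1}}\cdot\ehr_{P_{\mathcal G_2}}$) is the natural and essentially correct argument, though the Ehrhart multiplicativity for such an amalgam is not a one-liner and should be stated and justified rather than asserted.

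The combinatorial bijection, however, has a real gap beyond the book-keeping you flag. Your description of the ``leftover'' pairs as exactly those where $uw$ carries opposite orientations is incorrect: with base $(u,uw)$ and orientation $\overrightarrow{uw}$ in both facet graphs, nothing forces $uw$ into either $T_1$ or $T_2$, so there are pairs with matching orientation and $uw\notin T_1\cup T_2$ which your first bijection does not hit and your second paragraph does not account for. Moreover, the proposed ``second preimage'' --- reverse $uw$ on one side and replace that half by an (almost-)greedy tree --- is not a well-defined involution: almost-greedy trees are produced by a deterministic process, so the map collapses many inputs to the same output and cannot pair off $\jaeg(\mathcal G_1)\times\jaeg(\mathcal G_2)$ with $\jaeg(\mathcal G)\times\{0,1\}$. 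The Lemmas you cite (\ref{l:reversing_cut_pointing_away}, \ref{l:edge_in_or_out_in_two_facets}, \ref{l:edge_ordered_differently_in_two_facets}, \ref{l:one-stick_trees}) guarantee \emph{existence} of suitable facet graphs and trees, not the bijectivity you need. If you want a Jaeger-tree proof, the cleaner route is to split $h^*_{P_{\mathcal G_i}}$ according to the status of $uw$ (head-edge, tail-edge, or absent) and match terms directly against $(1+t)\,h^*_{P_{\mathcal G}}$; the $(1+t)$ then arises from the two choices of which side ``owns'' $uw$, not from an orientation flip.
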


The analogous statement holds for interior polynomials. 
\begin{thm}\cite[Theorem 6.7]{hiperTutte}
Let $\mathcal{G}_1$ and $\mathcal{G}_2$ be bipartite graphs with $V(\mathcal{G}_1)\cap V(\mathcal{G}_2)=\{u,w\}$, so that $E(\mathcal{G}_1)\cap E(\mathcal{G}_2)=\{uw\}$. Let $\mathcal{G}_1\cup \mathcal{G}_2$ be the bipartite graph glued along the edge $uw$.
Then $I_{\mathcal{G}_1 \cup \mathcal{G}_2}=I_{\mathcal{G}_1}\cdot I_{\mathcal{G}_2}$.
\end{thm}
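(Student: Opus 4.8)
The plan is to prove the identity by transporting it, via the machinery of this paper, to a bijective statement about Jaeger trees. Write $G_1,G_2,G$ for the standard orientations of $\cG_1$, $\cG_2$, $\cG=\cG_1\cup\cG_2$. Combining Theorem~\ref{thm:shelling_for_semibalanced} with Proposition~\ref{prop:h-dissect}, the interior polynomial of a semi-balanced digraph equals the generating function of its Jaeger trees weighted by the number of internally semipassive edges (tail-edges whose fundamental cut is undirected); in particular $I_{\cG_i}(x)=\sum_{T\in\jaeg(G_i)}x^{s(T)}$ and $I_{\cG}(x)=\sum_{T\in\jaeg(G)}x^{s(T)}$, where $s(T)$ denotes that count. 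Hence it suffices to produce a bijection $\jaeg(G)\to\jaeg(G_1)\times\jaeg(G_2)$, $T\mapsto(T_1,T_2)$, along which $s$ is additive: $s(T)=s(T_1)+s(T_2)$. Since $I_{\cG_i}$ is independent of the chosen ribbon structure and basis (both give the $h^*$-vector of the fixed polytope $\mathcal Q_{G_i}$), we are free to choose these conveniently.

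First I would fix the data. Put $b_0=u$ and take $uw$ as base edge; note $uw$ is present, and oriented $\overrightarrow{uw}$, in all three standard orientations. Build a ribbon structure on $\cG$ adapted to the splitting: at $u$ and at $w$, list the incident edges so that the block of $\cG_1$-edges and the block of $\cG_2$-edges are not interleaved and the base edge $uw$ sits at the seam, and at every other vertex keep the cyclic order it carries inside the unique $\cG_i$ containing it. The point of such a choice is that, because $\{u,w\}$ separates $V(\cG_1)$ from $V(\cG_2)\setminus\{u,w\}$ and each edge becomes current exactly twice in a Bernardi tour, the tour of a spanning tree $T$ of $\cG$ breaks into a $\cG_1$-part and a $\cG_2$-part hinged at $uw$; restricted to the edges of $\cG_1$ (resp.\ $\cG_2$) it agrees with the tour of $T\cap E(\cG_1)$ in $\cG_1$ (resp.\ of a suitable completion of $T\cap E(\cG_2)$ in $\cG_2$) for the induced data.

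Next I would set up the correspondence on spanning trees and cut it down. For a spanning tree $T$ of $\cG$, exactly one of the following holds: $uw\in T$, and then $T_1:=T\cap E(\cG_1)$ and $T_2:=T\cap E(\cG_2)$ are both spanning trees, each containing $uw$; or $uw\notin T$ and the unique $u$--$w$ path of $T$ lies inside one of the $\cG_i$, say $\cG_1$, in which case $T_1:=T\cap E(\cG_1)$ is a spanning tree of $\cG_1$ and $T_2:=(T\cap E(\cG_2))\cup\{uw\}$ is a spanning tree of $\cG_2$. Conversely, given $(T_1,T_2)$ one reconstructs $T$ by gluing and, when $uw$ lies in both, by deleting a suitable cycle edge. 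The heart of the argument is then: (a) using the tour decomposition above, $T$ is a Jaeger tree of $G$ if and only if $T_1$ and $T_2$ are Jaeger trees of $G_1$ and $G_2$; (b) every pair of Jaeger trees — including those in which neither $T_1$ nor $T_2$ uses $uw$ — arises exactly once, the location of the $u$--$w$ path being forced by the Jaeger property; and (c) $s(T)=s(T_1)+s(T_2)$. For (c) one compares fundamental cuts: for an edge $e\neq uw$ of $T$ not lying on a $u$--$w$ path, $C^*_{\cG}(T,e)$ coincides with $C^*_{\cG_i}(T_i,e)$ and the tail/head status is read off the corresponding part of the tour, so its contribution matches; for $e=uw$ (when a tree edge) one uses $C^*_{\cG}(T,uw)=C^*_{\cG_1}(T_1,uw)\sqcup C^*_{\cG_2}(T_2,uw)$ together with the fact that $uw$ appears oriented $\overrightarrow{uw}$ in both halves, so the $\cG$-cut is directed exactly when both halves are; the case of a tree edge lying on a $u$--$w$ path, where the $\cG$- and $\cG_i$-cuts genuinely differ by (among others) the edge $uw$, is the one to analyze most carefully.

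The main obstacle is exactly this $uw$-bookkeeping: making the tour-decomposition statement precise enough (which requires pinning down the adapted ribbon structure and checking that the tour really does respect the $\{u,w\}$-separation), and then verifying that $uw$ and the tree edges lying on $u$--$w$ paths are counted by $s$ the right number of times — it is precisely this constraint that dictates which of $T_1$, $T_2$ should be declared to contain $uw$ in the second case of the correspondence. A more geometric route worth keeping in mind is that $\mathcal Q_{\cG_1\cup\cG_2}$ is the direct sum of $\mathcal Q_{\cG_1}$ and $\mathcal Q_{\cG_2}$ glued at their common vertex $\mathbf w-\mathbf u$ (they span complementary affine subspaces through that vertex and their dimensions add up to $\dim\mathcal Q_{\cG_1\cup\cG_2}$), which makes multiplicativity of $h^*$ plausible from general polytope theory; but the Jaeger-tree argument above is the one in the spirit of the present paper, and it has the advantage of exhibiting the product decomposition explicitly at the level of the unimodular simplices.
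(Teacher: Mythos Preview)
The paper does not give its own proof of this statement; it is quoted from \cite{hiperTutte}, and the paper only remarks that a directed generalization is recorded as Proposition~11.1 of \cite{semibalanced}. So there is no in-paper argument to compare your proposal against.

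That said, your Jaeger-tree argument has a concrete error in the choice of base point. You take $b_0=u$, the \emph{tail} of $\overrightarrow{uw}$, and base edge $uw$. With that choice the tour begins at $(u,uw)$, so a Jaeger tree of $G_i$ is perfectly free to skip $uw$ (it is seen at its tail). Hence there exist pairs $(T_1,T_2)\in\jaeg(G_1)\times\jaeg(G_2)$ with $uw\notin T_1$ and $uw\notin T_2$; already for $\cG_1=\cG_2=C_4$ each factor has such a tree. But your map $T\mapsto(T_1,T_2)$, as you describe it, always outputs a pair in which at least one $T_i$ contains $uw$: if $uw\in T$ both do, and if $uw\notin T$ you add $uw$ back to the side that lacked the $u$--$w$ path. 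So the map cannot be surjective, and claim~(b) is false as stated. The remedy is to take $b_0=w$, the \emph{head} of $\overrightarrow{uw}$: then the tour of any spanning tree of $G$, $G_1$, or $G_2$ starts at $(w,wu)$, the edge $uw$ is seen at its head first, and every Jaeger tree is forced to contain $uw$. Now the correspondence $T\leftrightarrow(T\cap E(\cG_1),\,T\cap E(\cG_2))$ is an honest bijection with no case distinctions, and the additivity $s(T)=s(T_1)+s(T_2)$ follows because $uw$ is a head-edge in all three trees and, for $e\ne uw$, the fundamental cut $C^*_\cG(T,e)$ coincides with $C^*_{\cG_i}(T_i,e)$ (since $u$ and $w$ lie on the same side of $T-e$). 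Your difficult ``tree edge on a $u$--$w$ path'' case simply disappears.

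Your closing geometric remark also needs a caveat. It is true that $\mathcal Q_{\cG_1}$ and $\mathcal Q_{\cG_2}$ sit in affine subspaces meeting only at $\mathbf w-\mathbf u$ and that $\mathcal Q_\cG$ is their free sum there. But multiplicativity of $h^*$ under free sum at a common \emph{vertex} is not a general fact: the free sum of two copies of the segment $[0,2]$ at the origin is the lattice triangle with vertices $(0,0),(2,0),(0,2)$, whose $h^*$ is $1+3t$, not $(1+t)^2$. What makes it work here is precisely that each $\mathcal Q_{\cG_i}$ admits a unimodular dissection all of whose simplices contain the distinguished vertex $\mathbf w-\mathbf u$ --- namely the Jaeger-tree dissection for the basis $(w,wu)$ --- so that the product dissection of the free sum is again unimodular. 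In other words, the ``more geometric route'' is not independent of the combinatorial one; it relies on the very Jaeger-tree input you are trying to bypass.
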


We note that a verbatim generalization of this statement holds for interior polynomials of all semi-balanced digraphs, which is stated as Proposition 11.1 in \cite{semibalanced}.

For both polynomials, the same product formula holds also if two graphs are glued along one vertex instead of one edge, see \cite[Proposition 5.2]{OT} and \cite[Corollary 6.8]{hiperTutte}. 

Computations also suggest connections between $\gamma_\cG$ and $I_\cG$.
We computed $\gamma$ and interior polynomials of 12000 randomly chosen bipartite graphs with 6 to 11 vertices.
Based on the computations, we formulate the following conjectures:

\begin{conject}
For any bipartite graph $\cG$, the degrees of $I_\cG$ and of $\gamma_\cG$ agree.
\end{conject}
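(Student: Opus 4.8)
The plan is to reformulate both sides of the claimed equality as statements about the single polynomial $h^*_{P_\cG}$. Recall from \cite{OT} that $h^*_{P_\cG}$ is palindromic of degree $d:=|V(\cG)|-1$. Writing its $\gamma$-expansion and setting $m:=\deg\gamma_\cG$, one has
\[
h^*_{P_\cG}(x)=(1+x)^{\,d-2m}\,q(x),\qquad q(x)=\sum_{i=0}^{m}(\gamma_\cG)_i\,x^i(1+x)^{2m-2i},
\]
where $q$ is palindromic of degree $2m$, $q(0)=(\gamma_\cG)_0=1$, and $q(-1)=(-1)^m(\gamma_\cG)_m\neq 0$. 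Hence $(1+x)\nmid q$, so the order of vanishing of $h^*_{P_\cG}$ at $x=-1$ is exactly $d-2m$, i.e.
\[
\deg\gamma_\cG=\tfrac12\bigl(|V(\cG)|-1-\operatorname{mult}_{x=-1}h^*_{P_\cG}\bigr).
\]
Thus the conjecture is equivalent to the clean identity $\operatorname{mult}_{x=-1}\!\bigl(h^*_{P_\cG}\bigr)=|V(\cG)|-1-2\deg I_\cG$.

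Next I would make $\deg I_\cG$ combinatorial and reduce to the $2$-core. Deleting a degree-one vertex $v$ exhibits $P_\cG$ as a bipyramid over $P_{\cG-v}$ (its only extra vertices are the two ``apexes'' $\pm(\mathbf u-\mathbf v)$, whose midpoint is the origin, an interior point of $P_{\cG-v}$), so $h^*_{P_\cG}=(1+t)\,h^*_{P_{\cG-v}}$, while $I_\cG=I_{\cG-v}$; both sides of the target identity drop by $1$. So we may assume $\cG$ has minimum degree $\ge 2$ (the tree case being trivial). For $\deg I_\cG$: fix a standard orientation $G_0$ of $\cG$, a facet graph by the bipartite corollary, with $I_\cG=h^*_{\mathcal Q_{G_0}}$; applying Proposition \ref{prop:h-dissect} and Theorem \ref{thm:shelling_for_semibalanced} to the Jaeger-tree dissection of the $(|V|-2)$-dimensional polytope $\mathcal Q_{G_0}$ gives $(I_\cG)_i=|\{T\in\jaeg(G_0):T\text{ has exactly }i\text{ tail-edges with undirected fundamental cut}\}|$, so $\deg I_\cG$ is the maximum number of such edges over Jaeger trees of $G_0$.

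The heart of the argument — and the step I expect to be the main obstacle — is to compute $\operatorname{mult}_{x=-1}\!\bigl(h^*_{P_\cG}\bigr)$ directly from Theorem \ref{cor:h^*_of_sym_edge_poly}, which yields $h^*_{P_\cG}(x)=\sum_{T\in\jaeg(\cG)}x^{\tau(T)}$ with $\tau(T)$ the number of tail-edges of $T$; so $h^*_{P_\cG}$ and its derivatives at $-1$ are signed sums over all Jaeger trees of all facet graphs. The goal is to build a sign-reversing involution on $\jaeg(\cG)$ — presumably out of the shelling moves of Section \ref{sec:shelling_orders}: flipping the first place where the tour of $T$ first meets a reversible directed cut, using Lemma \ref{l:reversing_cut_pointing_away} to pass to a neighbouring facet graph and changing $\tau$ by $\pm1$ — whose non-fixed orbits cancel not only $h^*_{P_\cG}(-1)$ but also the first $d-2\deg I_\cG$ coefficients of the Taylor expansion of $h^*_{P_\cG}$ at $-1$, leaving a residual fixed set in sign-coherent bijection with the Jaeger trees of $G_0$ attaining the maximal number of undirected-fundamental-cut tail-edges. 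Matching the resulting exponent of $(1+x)$ against the formula for $\deg I_\cG$ above then closes the loop. The difficulty is real: unlike the coefficient $h^*_1$ implicitly computed in Theorem \ref{thm:gamma(1)=2g}, the ``middle'' of the $h^*$-vector mixes Jaeger trees of many facet graphs at once, so the cancellations must be organized coherently across the whole poset of facet graphs; and the $K_{2,2}$ example, where $h^*_{P_\cG}(x)/(1+x)=1+4x+x^2$ is neither $I_\cG^2$ nor any lattice dilate of $h^*_{\mathcal Q_\cG}$, shows that no purely formal product/join shortcut replaces the hands-on construction of this involution from the ribbon-structure combinatorics.
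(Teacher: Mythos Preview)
The statement you are trying to prove is stated in the paper as a \emph{conjecture}, not a theorem: the authors write that they ``computed $\gamma$ and interior polynomials of 12000 randomly chosen bipartite graphs'' and that ``based on the computations, we formulate the following conjectures''. There is no proof in the paper to compare against.

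As for your proposal itself, the preliminary reductions are sound and potentially useful. The identity $\deg\gamma_\cG=\tfrac12\bigl(d-\operatorname{mult}_{x=-1}h^*_{P_\cG}\bigr)$ with $d=|V|-1$ is a correct and standard consequence of the $\gamma$-expansion, and the reduction to the $2$-core via the bipyramid observation is also fine. Your combinatorial description of $\deg I_\cG$ via Jaeger trees of the standard orientation is likewise a legitimate consequence of the cited results.

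However, the proposal is not a proof: the decisive step --- constructing a sign-reversing involution on $\jaeg(\cG)$ that forces $\operatorname{mult}_{x=-1}h^*_{P_\cG}=|V|-1-2\deg I_\cG$ --- is only stated as a \emph{goal}, and you yourself flag it as ``the main obstacle'' where ``the difficulty is real''. Nothing in the paper (in particular Lemma~\ref{l:reversing_cut_pointing_away} or the shelling arguments of Section~\ref{sec:shelling_orders}) supplies such an involution, and your sketch does not specify one: which cut to flip, why the operation is an involution, why it changes $\tau$ by exactly $\pm1$, why the fixed set has the claimed size and parity structure, and why the cancellations extend to the required number of derivatives are all left open. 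Until that construction is actually carried out and verified, what you have is a promising strategy for attacking an open conjecture, not a proof.
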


\begin{conject}\label{conj:gamma_determines_interior}
For any bipartite graph $\cG$, $\gamma_\cG$ determines $I_{\cG}$. That is, if for two bipartite graphs, their $\gamma$-polynomial agrees, then their interior polynomial agrees as well.
\end{conject}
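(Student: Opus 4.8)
\medskip
\noindent\textbf{A proof proposal for Conjecture~\ref{conj:gamma_determines_interior}.}
Since $h^*_{P_\cG}$ is palindromic of degree $|V|-1$, it and $\gamma_\cG$ determine each other, so the plan is to show that the whole Ehrhart data of $P_\cG$ determines $I_\cG=h^*_{\mathcal{Q}_\cG}$. Two coefficient-level facts make this plausible: $\gamma_0=1$ is the constant term of $I_\cG$, and $\gamma_1=2g$ (Theorem~\ref{thm:gamma(1)=2g}) is exactly twice the linear coefficient of $I_\cG$, which is the nullity $g=|E|-|V|+1$ (a spanning tree of $\cG$ gives $|V|-1$ affinely independent vertices of $\mathcal{Q}_\cG$, and these are its only lattice points). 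So in low degree the two polynomials are already linked by a clean universal rule, and the task is to promote this to all degrees. I would first try to settle the preceding conjecture on matching degrees, since any inversion formula below will have to explain it.

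The main line of attack I would pursue is an explicit inversion formula generalising \cite[Corollary 5.5]{OT}. Concretely, I would look for a universal identity expressing $\gamma_\cG(x)$ as a finite signed combination $\sum_{S} a_{|\partial S|}\, I_{\cG_S}(\beta x)$, where $S$ ranges over subsets of $V(\cG)$ (equivalently over cuts $(S,V\setminus S)$), $\cG_S$ is a bipartite graph built functorially from the two sides $\cG[S]$, $\cG[V\setminus S]$ and the edges across, the $a_j$ and $\beta$ are absolute constants, and the term for the trivial cut $S=V$ equals a nonzero multiple of $I_\cG(\beta x)$ and strictly dominates the others in degree. Given such an identity one recovers $I_\cG$ from $\gamma_\cG$ by a M\"obius-type inversion over the Boolean lattice of cuts, inducting on $|V(\cG)|$: the dominating term isolates $I_\cG$ after the lower-complexity contributions (which by induction are themselves functions of the $\gamma$-data of proper minors) have been subtracted off. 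The natural route to the identity itself is the Jaeger-tree formula of Theorem~\ref{cor:h^*_of_sym_edge_poly}: classify every facet graph of $\cG$ by the inclusion-maximal directed cut relating it to the standard orientation --- precisely the relation exploited in Lemmas~\ref{l:reversing_cut_pointing_away} and \ref{l:nonfirst_orientation_exits_dircut} --- and show that within each class the distribution of Jaeger trees by number of tail-edges factors through the two sides of the cut, the cut itself contributing only a binomial-type weight that records its size.

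A complementary, and perhaps more robust, approach is via deletion--contraction. The interior polynomial already obeys a polymatroid-type recursion and the gluing product formula \cite[Theorem 6.7]{hiperTutte}, and one can try to establish a matching recursion for $\gamma_\cG$ directly from the combinatorics of facet graphs and Jaeger trees: deleting an edge removes one generator direction of $P_\cG$, while contracting an edge merges two consecutive layers of every layering, and in both moves one can track how the set of semi-balanced orientations and their Jaeger trees degenerate. If the two recursions can be shown to have the same base cases and compatible branching, then $\gamma_\cG$ determines $I_\cG$ by induction on $|E(\cG)|$; useful consistency checks for this framework are that it should also yield the degree conjecture and reproduce $\gamma_{\cG_1\cup\cG_2}=\gamma_{\cG_1}\gamma_{\cG_2}$.

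The main obstacle is that no combinatorial interpretation of $\gamma_i$ is known for $i\ge 3$ --- this is the open Problem posed at the end of Section~\ref{sec:gamma_1} --- so there is at present no derived identity or recursion to feed into either plan, and the parallel between the two polynomials is rigorously understood only for the Ohsugi--Tsuchiya class. Concretely, the crux will be controlling the contribution of the ``exotic'' semi-balanced orientations --- those not obtained from the standard orientation by reversing a single directed cut --- and proving that, once their Jaeger-tree statistics are repackaged through palindromicity into $\gamma$, they carry no information independent of $I_\cG$. A realistic first milestone is to extract $\gamma_2$ combinatorially (its nonnegativity being already known from \cite{dalietal}) and match it against the quadratic coefficient of $I_\cG$; if that comparison produces the predicted universal rule, it would sharply constrain the shape of the inversion formula above.
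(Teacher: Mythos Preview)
There is nothing to compare your proposal against: in the paper this statement is a \emph{conjecture}, not a theorem, and the authors offer no proof or proof sketch. Their sole supporting evidence is computational --- they report having checked roughly 12000 randomly chosen bipartite graphs on 6 to 11 vertices --- together with the circumstantial parallels (matching product formulas for edge- and vertex-gluing, the coincidence $\gamma_1=2g$ versus the linear coefficient $g$ of $I_\cG$) that you also cite.

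Your document is accordingly not a proof but a research programme, and you are candid about this: you identify the main obstacle yourself (no combinatorial interpretation of $\gamma_i$ for $i\ge3$, hence no identity or recursion to feed into either the cut-inversion or the deletion--contraction plan). That honesty is appropriate; just be clear that what you have written establishes nothing about the conjecture beyond what the paper already records. In particular, neither the proposed M\"obius-type inversion over cuts nor the parallel deletion--contraction recursion is known to exist, and the paper gives no hint that either does; the Ohsugi--Tsuchiya formula you invoke applies only to graphs of the special form $\tilde{\cG}$, and extending it to arbitrary bipartite graphs is precisely the open problem. If you wish to pursue this, the first concrete step you name --- extracting $\gamma_2$ combinatorially and matching it against the quadratic coefficient of $I_\cG$ --- is a reasonable one, but even a successful match there would be evidence, not proof.
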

We note that the converse of Conjecture \ref{conj:gamma_determines_interior} is not true. In particular, we found two graphs both having $3x^2+3x+1$ as interior polynomial, but one of them having $12x^2+6x+1$ as $\gamma$-polinomial, while the other one having $14x^2+6x+1$ as $\gamma$-polinomial.

\begin{conject}
For any bipartite graph $\cG$,
among the facet graphs of $\cG$, the two standard orientations of $\cG$ have a coefficientwise minimal interior polynomial.
\end{conject}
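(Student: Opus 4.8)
The plan is to reduce the conjecture to a coefficientwise comparison of the $h^*$-vectors of the root polytopes $\mathcal{Q}_G$ of the facet graphs $G$ of $\cG$, and then to prove the inequality by a monotonicity argument along a chain of directed-cut reversals joining an arbitrary facet graph to a standard orientation. Throughout, fix one standard orientation $G_0$ of $\cG$ (its mirror image has the same interior polynomial, so this is harmless), and recall that since $\cG$ is bipartite every facet graph uses all edges of $\cG$.

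\textbf{Reductions.} First, for \emph{every} facet graph $G$ the polytope $\mathcal{Q}_G$ has no lattice points beyond its $|E(\cG)|$ vertices: by Theorem~\ref{thm:semibalanced_dissection} it is a union of the simplices $\mathcal{Q}_T$ with $T$ a Jaeger tree, each $\mathcal{Q}_T$ is the facet opposite the origin of the unimodular simplex $\tilde{\mathcal Q}_T$ of Proposition~\ref{prop:unimodular_simplices}, hence a unimodular simplex in its own affine hull whose only lattice points are its vertices --- which are vertices of $\mathcal{Q}_G$. Consequently $(I_G)_0=1$ and $(I_G)_1=|E(\cG)|-|V(\cG)|+1$ for all facet graphs $G$, so the conjecture holds trivially in degrees $0$ and $1$, and also in every degree exceeding $\deg I_{G_0}$; the content is the range $2\le i\le\deg I_{G_0}$. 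Second, fix a ribbon structure and basis: by Theorem~\ref{thm:shelling_for_semibalanced} and Proposition~\ref{prop:h-dissect}, $(I_G)_i$ equals the number of Jaeger trees of $G$ with exactly $i$ internally semipassive edges, independently of the choices.

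\textbf{Main steps.} \emph{Step 1.} Every facet graph $G$ can be joined to $G_0$ by a sequence $G=G^{(0)},G^{(1)},\dots,G^{(m)}=G_0$ in which each $G^{(k+1)}$ arises from $G^{(k)}$ by reversing a directed cut, and which is monotone toward $G_0$ in the sense that $\sum_{v}\bigl|l_{G_0}(v)-l_{G^{(k)}}(v)\bigr|$ strictly decreases (layerings normalized so that $l_{G_0}\ge l_{G^{(k)}}$ pointwise with equality somewhere). This is a routine variant of the proof of Lemma~\ref{l:nonfirst_orientation_exits_dircut}: writing $U(j)=\{v:\,l_{G_0}(v)-l_{G^{(k)}}(v)=j\}$, each threshold $(\bigcup_{i\le j}U(i),\,\bigcup_{i>j}U(i))$ is a directed cut in both orientations but oppositely oriented, and reversing one of them (for a suitable $j\ge1$) reorients it to agree with $G_0$ and lowers the discrepancy; one also checks that reversing a directed cut in a bipartite facet graph again gives a facet graph (the orientation stays semi-balanced on every cycle and the graph stays connected, and all edges remain present). \emph{Step 2} (the crux). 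A single toward-$G_0$ directed-cut reversal $G\leadsto G'$ satisfies $(I_{G'})_i\le(I_G)_i$ for every $i$. \emph{Step 3.} Iterating Step 2 along the chain of Step 1 gives $(I_{G_0})_i\le\cdots\le(I_G)_i$, which is the assertion.

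\textbf{Main obstacle.} The whole difficulty is Step 2. Here $\mathcal{Q}_G$ and $\mathcal{Q}_{G'}$ involve the same edge vectors outside the reversed cut $(V_0,V_1)$ and differ only in the edges crossing it, so I would look for a systematic way to replace each Jaeger tree of $G'$ by a Jaeger tree of $G$ without lowering the internal-semipassivity statistic. The natural tool is the ``tree surgery across a reoriented cut'' performed in the proof of Theorem~\ref{t:quadratic_shelling} (the construction of the family $\mathcal T$ there): carry out that surgery for the cut $(V_0,V_1)$, prove the resulting map $\jaeg(G')\hookrightarrow\jaeg(G)$ is injective, and track the tail-edges and the directedness of fundamental cuts under it. The subtle point --- and the reason the paper's shelling machinery does not apply off the shelf --- is that $G_0$ is \emph{not} the $f$-maximal facet graph $G_1$ (already for $C_6$, $G_0\ne G_1$), so the interior polynomial is not monotone along the $f$-order governing the shellings; the order we need is the ``Hamming distance to $G_0$'' order, which is genuinely different, and forcing the surgery to respect it --- in particular handling directed cuts that point toward $b_0$, not only away from it --- is the real work. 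Should Step 2 fail while the conjecture still holds, a non-inductive substitute would be required; one possibility is to pass to the unimodularly equivalent picture obtained by negating the coordinates indexed by one partite class, under which $\mathcal{Q}_{G_0}$ becomes the compressed edge polytope $\conv\{\mathbf u+\mathbf w:uw\in E(\cG)\}$ and the remaining $\mathcal{Q}_G$ become ``signed'' edge polytopes, reducing the claim to the coefficientwise minimality of the $h^*$-vector of the compressed member; I am not aware of a general principle delivering this, so it seems no easier than the combinatorial route.
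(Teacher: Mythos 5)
This statement is a \emph{conjecture} in the paper, supported only by computational evidence; the paper offers no proof, so there is no argument to compare yours against. Your proposal is --- as you candidly acknowledge --- a program, not a proof: everything hinges on Step~2 (that a single toward-$G_0$ directed-cut reversal is coefficientwise monotone for interior polynomials), and Step~2 is not proven. You correctly identify the obstruction: the paper's shelling and surgery machinery (Lemmas \ref{l:reversing_cut_pointing_away}--\ref{l:nonfirst_orientation_exits_dircut}, Theorems \ref{thm:shelling_of_symm_edge_poly} and \ref{t:quadratic_shelling}) is built around the $f$-maximal facet graph $G_1$ and cuts directed \emph{away} from $b_0$, whereas $G_0\ne G_1$ in general and the cuts in your chain may point toward $b_0$. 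A genuinely new monotonicity argument, or an injective statistic-respecting map $\jaeg(G')\hookrightarrow\jaeg(G)$, would be required; none is supplied, and as you note yourself Step~2 might fail even while the conjecture holds.

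The parts you do carry out are sound. $(I_G)_0=1$ and $(I_G)_1=g$ because $\mathcal Q_G$ has only its $|E(\cG)|$ vertices as lattice points (dissect by Jaeger trees as in Theorem \ref{thm:semibalanced_dissection} and use unimodularity from Proposition \ref{prop:unimodular_simplices}), and the identification of $(I_G)_i$ with a count of Jaeger trees by internally-semipassive degree is Theorem \ref{thm:shelling_for_semibalanced} combined with Proposition \ref{prop:h-dissect}. Step~1 is also correct, and in fact the bipartite hypothesis makes it cleaner than you indicate: since there are no hidden edges, the sets $U(j)=\{v:\,l_{G_0}(v)-l_{G^{(k)}}(v)=j\}$ are empty for all $j$ of one parity, so one can always reverse the cut at the top nonempty threshold, strictly lowering the maximal layering discrepancy by $2$, and the chain terminates. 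Still, this scaffolding does not close the gap at Step~2, so the proposal does not establish the conjecture.
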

We note that for non-bipartite graphs there is no sense of standard orientation, and it is not clear how one could generalize the preceding conjectures to non-bipartite graphs.

We can also see some connections between the low-degree coefficients of $I_\cG$ and $\gamma_\cG$. By \cite[Theorem 6.3]{hiperTutte}, the coefficient of the constant term of the interior polynomial $I_\cG$ is the nullity $g(\cG)$. While Theorem \ref{thm:gamma(1)=2g} tells us that $\gamma_1(\cG)=2g(\cG)$. However, for coefficients of larger degree, there is no such simple connection, and the ratio of the quadratic terms can be non-integer.

\section{Some concrete special cases}
\label{sec:concrete_cases}

We would like to demonstrate that the technique of Jaeger trees is also applicable to compute the $h^*$-polynomial of symmetric edge polytopes in concrete cases. Here we treat trees, cycles and cacti, complete bipartite graphs, and planar graphs. For the first four classes, the $h^*$-vector was determined previously using Gröbner bases. 

\subsection{Warmup: Trees} 
As a warmup, let us compute the $\gamma$-polinomial of $P_\cG$ if $\cG$ is a tree.

\begin{prop}\cite[Example 5.1]{OT}
Let $\cG$ be a tree. Then $(\gamma_\cG)_0=1$, and $(\gamma_\cG)_i=0$ for each $i>0$.
\end{prop}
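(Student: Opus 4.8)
The plan is to run the Jaeger tree machinery of Section \ref{sec:shelling_orders} directly; for a tree every step is essentially immediate. First I would identify the facet graphs of $\cG$. By Theorem \ref{thm:facets_of_P_G} a facet graph comes from a layering $l$ whose ``steep'' edges $E_l$ form a spanning subgraph of $\cG$; but a tree has only one spanning connected subgraph, namely itself, so necessarily $E_l=E(\cG)$ and $|l(u)-l(v)|=1$ on every edge. Conversely, starting from any orientation of $\cG$ and any chosen value at one vertex, the rule $l(v)=l(u)+1$ for $\overrightarrow{uv}$ propagates consistently through the tree (there are no cycles to cause conflicts), so each of the $2^{|E(\cG)|}$ orientations of $\cG$ arises as a facet graph.

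Next I would observe that every facet graph $G$ has exactly one Jaeger tree. Indeed, the only spanning tree of $G$, when orientations are ignored, is $\cG=G$ itself; and for $T=G$ there are no non-tree edges, so the Jaeger condition of Definition \ref{def:jaegertree} holds vacuously, whatever ribbon structure and basis we fix. Hence $\jaeg(\cG)$ is in bijection with the orientations of $\cG$. The last point is to count tail-edges. Fix the base node $b_0$. For $T=G$ and an edge $\overrightarrow{th}\in T$, deleting it splits $T$ into the component containing $b_0$ and the one that does not; by definition $\overrightarrow{th}$ is a tail-edge exactly when $t$ lies on the $b_0$-side, i.e.\ exactly when the edge points ``away'' from $b_0$ in the tree rooted at $b_0$. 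Since the $|E(\cG)|$ edges may be oriented independently, the number of facet graphs with exactly $i$ tail-edges is $\binom{|E(\cG)|}{i}=\binom{|V(\cG)|-1}{i}$. By Theorem \ref{cor:h^*_of_sym_edge_poly} this yields $h^*_{P_\cG}(t)=(1+t)^{|V(\cG)|-1}$, whose $\gamma$-polynomial is the constant $1$ (that is, $\gamma_0=1$ and $\gamma_i=0$ for $i\geq 1$), which is the claim.

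There is no genuine obstacle here; the only thing to be careful about is the orientation convention in the definition of tail-edge. As a sanity check one may note that this is consistent with $\gamma_1=2g=0$ from Theorem \ref{thm:gamma(1)=2g}. As an aside, one could also bypass Jaeger trees entirely: for a tree the vectors $\mathbf u-\mathbf v$, $uv\in E(\cG)$, form a lattice basis of the hyperplane $\sum_{v}x_v=0$, so $P_\cG$ is unimodularly equivalent to the standard cross-polytope of dimension $|V(\cG)|-1$, whose $h^*$-polynomial is well known to be $(1+t)^{|V(\cG)|-1}$.
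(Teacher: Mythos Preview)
Your proof is correct and follows essentially the same line as the paper's: identify the facet graphs as all $2^{|E(\cG)|}$ orientations of the tree, note that each has a unique (vacuously) Jaeger tree, and count tail-edges independently to obtain $h^*(t)=(1+t)^{|V(\cG)|-1}$, whence $\gamma\equiv 1$. Your cross-polytope aside is a nice bonus not present in the paper.
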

\begin{proof}
    Suppose that $\cG$ has $m$ edges. As $\cG$ is bipartite, the facet graphs of $\cG$ are orientations of $\cG$. As there is no cycle, all $2^m$ orientations of $\cG$ are semibalanced. In each orientation, there is exactly one Jaeger tree, that contains all edges. Moreover, the tour of each of these Jaeger trees is the same, hence the orientation of an edge will determine if it is going to be a head-edge or a tail-edge. Hence there are exactly $\binom{m}{k}$ Jaeger trees in the facet graphs of $\cG$ with exactly $k$ tail-edges. This means that $(\gamma_\cG)_0=1$, and $(\gamma_\cG)_i=0$ for each $i>0$.
\end{proof}

\subsection{Cycles and cacti}

Using Gröbner bases, Ohsugi and Tsuchiya \cite{OT} compute the $\gamma$-polynomial of $P_\cG$ if $\cG$ is a cycle. Here we reproduce this result using our methods.

\begin{thm}\cite[Proposition 5.7]{OT}
	$\gamma_{C_{n}}(i)=\binom{2i}{i}$ for $i=0, 1, \dots, \lfloor\frac{n-1}{2}	\rfloor$.
\end{thm}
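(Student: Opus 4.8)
The plan is to invoke Theorem~\ref{cor:h^*_of_sym_edge_poly} and enumerate the Jaeger trees of all facet graphs of $C_n$ according to their number of tail-edges. First I would pin down the facet graphs of $C_n$. By Theorem~\ref{thm:facets_of_P_G} a facet graph is a spanning subgraph equipped with a layering, and since deleting two edges disconnects a cycle it is either all of $C_n$ or $C_n$ with exactly one edge removed; by Theorem~\ref{thm:semibalanced_graphs_char} the present edges must admit a layering, which (reading once around) means that, calling an edge $+$ or $-$ according to whether it agrees with a fixed cyclic reference direction, the present edges split evenly into $+$'s and $-$'s. Hence for $n$ even the facet graphs are the $\binom{n}{n/2}$ balanced orientations of $C_n$, while for $n$ odd they are the $n\binom{n-1}{(n-1)/2}$ pairs (hidden edge $e_j$, balanced orientation of the path $C_n-e_j$).

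Next I would identify the Jaeger trees. The value of $h^*$ does not depend on the ribbon structure or basis, so I may work with the ribbon structure of $C_n$ (forced, since every vertex has degree $2$) and the basis $(v_0,e)$ where $e$ is the edge at $v_0$ pointing in the $+$ direction. A spanning tree is $C_n$ minus one edge, a path. For $n$ odd each facet graph already is a spanning path with no present non-tree edge, so it is automatically its own unique Jaeger tree, and the deleted edge is the hidden one. For $n$ even, running Bernardi's tour (Definition~\ref{def:tour_of_a_tree}) of $T=C_n-e_k$ shows that the present non-tree edge $e_k$ is first met at whichever of its two endpoints the tour reaches first along $T$ from $v_0$; hence $T$ is a Jaeger tree exactly when $e_k$ is a $+$-edge, and each balanced orientation contributes $n/2$ Jaeger trees. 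In both parities, with $\mu=\lfloor(n-1)/2\rfloor$, a Jaeger tree is recorded by the choice of a ``removed'' edge $e_k$ together with an orientation of the remaining $n-1$ edges having exactly $\mu$ of them $+$.

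Then I would count tail-edges and sum. A tree-edge $\overrightarrow{th}\in T$ is a tail-edge iff it points away from $v_0$ along $T$; for the path $T=C_n-e_k$ this says: on the arc of $T$ leaving $v_0$ in the $+$ direction a tree-edge counts iff it is $+$, and on the other arc it counts iff it is $-$. Writing $a$ for the number of $+$-edges among the $k$ edges preceding $e_k$, this yields
\[
h^*_{P_{C_n}}(x)=\sum_{k=0}^{n-1}\sum_{a}\binom{k}{a}\binom{n-1-k}{\mu-a}\,x^{\,2a+(n-1-\mu)-k}.
\]
Using $\sum_a\binom{k}{a}\binom{n-1-k}{\mu-a}(x^2)^a=[z^\mu](1+x^2z)^k(1+z)^{n-1-k}$, interchanging sums, summing the geometric series $\sum_{k=0}^{n-1}\big(\tfrac{1+x^2z}{x}\big)^k(1+z)^{n-1-k}=\tfrac{A^n-B^n}{A-B}$ with $A=\tfrac{1+x^2z}{x}$, $B=1+z$ (note $A-B=\tfrac{(1-x)(1-xz)}{x}$), and extracting $[z^\mu]$ via $\tfrac1{1-xz}=\sum_i x^iz^i$ collapses this to
\[
h^*_{P_{C_n}}(x)=\sum_{l=0}^{\lfloor(n-1)/2\rfloor}\binom{n}{l}\,x^l\,(1+x+\cdots+x^{\,n-1-2l}),
\]
that is, $\big(h^*_{P_{C_n}}\big)_t=\sum_{l=0}^{\min(t,\,n-1-t)}\binom{n}{l}$ (palindromic of degree $n-1$, with $h^*(1)$ the expected normalized volume, a convenient sanity check).

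Finally I would read off the $\gamma$-polynomial: it remains to verify $\sum_{l}\binom{n}{l}x^l(1+x+\cdots+x^{\,n-1-2l})=\sum_{l}\binom{2l}{l}x^l(1+x)^{n-1-2l}$, which by definition of the $\gamma$-polynomial is precisely $\gamma_{C_n}(i)=\binom{2i}{i}$. This follows from $\sum_{l\ge0}\binom{2l}{l}w^l=(1-4w)^{-1/2}$: putting $w=x/(1+x)^2$ gives $\sum_{l\ge0}\binom{2l}{l}x^l(1+x)^{-2l}=\tfrac{1+x}{1-x}$, hence $\sum_{l\ge0}\binom{2l}{l}x^l(1+x)^{n-1-2l}=\tfrac{(1+x)^n}{1-x}$ as a formal power series, whose $x^t$-coefficient is $\sum_{l=0}^{t}\binom{n}{l}$; comparing with $\big(h^*_{P_{C_n}}\big)_t$ for $0\le t\le n-1$ (in that range only the terms $l\le\min(t,n-1-t)$ survive in either sum) completes the proof. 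I expect the main obstacles to be making the Bernardi-tour analysis uniformly correct (including the boundary edges and both parities) and carrying out the generating-function collapse of the double binomial sum cleanly; the concluding central-binomial identity is standard.
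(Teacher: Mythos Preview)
Your setup matches the paper's: both arguments identify the facet graphs of $C_n$, pin down the Jaeger trees (removal of a $+$-edge when $n$ is even, the unique tree when $n$ is odd), and count tail-edges via the same statistic $2a+(n-1-\mu)-k$. From there the two arguments diverge.

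The paper massages the double sum directly into the form $h^*_i=\sum_{j=0}^i\binom{2j}{j}\binom{n-1-2j}{i-j}$ using a separate combinatorial identity (their Lemma~\ref{l:binom_azonossag}); since this is by definition the coefficient of $x^i$ in $\sum_j\binom{2j}{j}x^j(1+x)^{n-1-2j}$, the $\gamma$-coefficients can be read off immediately. You instead collapse the double sum via a generating-function computation to the closed form $h^*_t=\sum_{l\le\min(t,n-1-t)}\binom{n}{l}$ (equivalently $h^*(x)=\sum_l\binom{n}{l}x^l(1+\cdots+x^{n-1-2l})$), and then verify separately that this equals $\sum_l\binom{2l}{l}x^l(1+x)^{n-1-2l}$ using the central-binomial generating function. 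Your route avoids the paper's ad hoc binomial identity and produces a pleasant closed form for $h^*$ along the way; the paper's route is shorter once that lemma is in hand and lands directly in the shape needed to read off $\gamma$.

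One point needs tightening. In your last step you equate $[x^t]$ of the \emph{infinite} series $\sum_{l\ge0}\binom{2l}{l}x^l(1+x)^{n-1-2l}=\frac{(1+x)^n}{1-x}$ with $[x^t]$ of the \emph{finite} polynomial $\sum_{l=0}^{\mu}\binom{2l}{l}x^l(1+x)^{n-1-2l}$. Your parenthetical ``only the terms $l\le\min(t,n-1-t)$ survive in either sum'' is correct for the finite polynomial sum, but not for the infinite series: for $l>\mu$ the factor $(1+x)^{n-1-2l}$ has negative exponent and does contribute to $[x^t]$ for all $t\ge l$. The clean fix is to observe that the tail $\sum_{l>\mu}(\cdots)$ has valuation $\ge\mu+1$ in $x$, so the two sums agree in degrees $t\le\mu$; for $\mu<t\le n-1$ use that both $h^*(x)$ and $\sum_{l=0}^{\mu}\binom{2l}{l}x^l(1+x)^{n-1-2l}$ are palindromic of degree $n-1$ to reduce to the case $t\le\mu$. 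With that adjustment your argument is complete.
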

Let the vertex set of the cycle be $\{v_0, \dots , v_{n-1}\}$, with edges $v_{i}v_{i+1}$ (modulo $n$) for $i=0, \dots n-1$. Let us call an edge $v_iv_{i+1}$ positively oriented if it points towards $v_{i+1}$ and negatively oriented otherwise. 

The following is the key lemma. 
\begin{lemma}
	Let us fix the base $(v_0, v_0v_1)$, and let $0\leq i\leq \lfloor\frac{n-1}{2}\rfloor$ be fixed. The total number of Jaeger trees with exactly $i$ tail-edges in all the facet graphs of $C_n$ is $\sum_{j=0}^i \binom{2j}{j}\binom{n-1-2j}{i-j}$.
\end{lemma}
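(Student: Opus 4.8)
The plan is to describe $\jaeg(C_n)$ explicitly, read off the number of tail-edges of each Jaeger tree, and then evaluate the resulting binomial sum.

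\emph{Step 1: facet graphs and their Jaeger trees.} Since $C_n$ has a unique cycle, its connected spanning subgraphs are $C_n$ itself and the $n$ Hamiltonian paths $C_n-e_k$, where $e_k=v_kv_{k+1}$. By Theorem \ref{thm:facets_of_P_G} together with the semi-balanced reformulation, a facet graph is either a semi-balanced orientation of $C_n$ — which forces $n$ even with exactly $n/2$ edges in each cyclic direction — or an orientation of some $C_n-e_m$ for which the layering gives equal values to the two endpoints of the hidden edge $e_m$ — which forces $n$ odd and the orientation ``balanced'' along the path. Computing the tour of a spanning tree $C_n-e_k$ with respect to the base $(v_0,v_0v_1)$ and the (unique) ribbon structure of $C_n$, a short check shows that the missing edge $e_k$ is first encountered at $v_k$. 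Hence inside a semi-balanced $C_n$ the Jaeger trees are precisely those $C_n-e_k$ for which the present, non-tree edge $e_k$ is positively oriented, while inside a path facet graph the path itself is the unique — trivial — Jaeger tree. Unifying the two cases: the elements of $\jaeg(C_n)$ are exactly the directed trees obtained by deleting one edge $e_k$ from $C_n$ and orienting the remaining $n-1$ edges so that, traversing the path from $v_{k+1}$ to $v_k$, exactly $a:=\lfloor(n-1)/2\rfloor$ of them point along the traversal; and each such directed tree is a Jaeger tree of a \emph{unique} facet graph (the layering being forced), so the union $\jaeg(C_n)=\bigcup_G\jaeg(G)$ is disjoint and involves no overcounting.

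\emph{Step 2: counting tail-edges.} Rooting $T=C_n-e_k\in\jaeg(C_n)$ at $b_0=v_0$, an edge of $T$ is a tail-edge exactly when it points away from $v_0$ along the path. Let $s$ be the number of path-edges on the $v_{k+1}$-side of $v_0$ (so $s=n-1-k$ runs over $0,\dots,n-1$), and let $p$ be the number of those $s$ edges pointing along the traversal. On that side ``away from $v_0$'' means against the traversal, on the other side along it; combined with the constraint that $a$ edges point along the traversal in all, this gives
\[ \#\{\text{tail-edges of }T\}=(s-p)+(a-p)=s+a-2p. \]
Therefore, for each fixed $k$, the number of Jaeger trees with exactly $i$ tail-edges is $\binom{s}{p}\binom{n-1-s}{a-p}$ with $p=(s+a-i)/2$, and summing over $k$,
\[ \#\{T\in\jaeg(C_n):T\text{ has }i\text{ tail-edges}\}=\sum_{s=0}^{n-1}\binom{s}{(s+a-i)/2}\binom{n-1-s}{(a+i-s)/2}. \]

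\emph{Step 3: evaluating the sum.} The cleanest finish is to show that both this double sum and the claimed value $\sum_{j=0}^{i}\binom{2j}{j}\binom{n-1-2j}{i-j}$ equal $\sum_{r=0}^{i}\binom{n}{r}$, using $2i\le n-1$. For the double sum, recognise it as the coefficient of $y^a t^i$ in
\[ \sum_{s=0}^{n-1}(y+t)^s(1+ty)^{n-1-s}=\frac{(y+t)^n-(1+ty)^n}{(y-1)(1-t)}; \]
since the numerator vanishes at $y=1$, taking the $y^a$-coefficient is a finite partial summation, and $2i\le n-1$ makes the resulting $t^{n-r}$-terms irrelevant in degree $i$, leaving $\sum_{r=0}^{i}\binom{n}{r}$. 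For the claimed value, the classical identity $\sum_{j\ge0}\binom{2j}{j}x^j=(1-4x)^{-1/2}$ with $x=t/(1+t)^2$ gives $\sum_{j\ge0}\binom{2j}{j}t^j(1+t)^{n-1-2j}=\frac{(1+t)^n}{1-t}$, whose $t^i$-coefficient is again $\sum_{r=0}^{i}\binom{n}{r}$ (the omitted terms $j>\lfloor(n-1)/2\rfloor$ have $t$-valuation $j>i$). I expect Step 3 to be the only place with genuine computation; the real care is needed in Step 1 — checking that the tour meets $e_k$ first at $v_k$, and that no directed spanning tree is a Jaeger tree of two different facet graphs, so that $\jaeg(C_n)$ is enumerated without repetition.
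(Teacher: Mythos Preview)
Your argument is correct. Steps 1 and 2 match the paper's proof in substance: the paper also identifies the facet graphs as balanced orientations (of $C_n$ for $n$ even, of each Hamiltonian path for $n$ odd), notes that the removed edge must be positively oriented, and computes the number of tail-edges as (positive edges on the $v_0$--$v_k$ arc)~$+$~(negative edges on the other arc). The paper treats the parities separately and parametrises by $j=$ (positive edges on the $v_k$-side), arriving at $\sum_{j=0}^i \binom{k-i+2j}{j}\binom{k+i-2j}{i-j}$ in the odd case and $\sum_{j=0}^i \binom{k-i+2j}{j}\binom{k-1+i-2j}{i-j}$ in the even case; your single sum over $s$ is the same collection of terms after the substitution $s=n-1-k$, $p=k-j$.

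The genuine difference is Step 3. The paper finishes via a separate combinatorial identity (its Lemma \ref{l:binom_azonossag}), $\sum_{a}\binom{2a}{a}\binom{b-2a}{n-a}=\sum_a\binom{2a+c}{a}\binom{b-c-2a}{n-a}$, proved by induction through an auxiliary interpretation with the functions $f_0,f_c$ on subsets of $[b]$. Your route instead evaluates both the enumeration sum and the target sum to the common closed form $\sum_{r=0}^i\binom{n}{r}$ using generating functions. This is shorter, avoids the bespoke lemma, and yields more information (an explicit closed form for $h^*_i$ itself, not just the claimed equality). The paper's lemma, on the other hand, is a more general binomial identity and uniformly handles the shift between the ``raw'' and ``target'' sums without needing the bound $2i\le n-1$. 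Either way the argument is complete; your caveat that Step 1 needs care (the tour meets $e_k$ first at $v_k$, and the disjointness of $\jaeg(C_n)$) is well placed, and both points check out as you describe.
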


\begin{proof} 
	Suppose first that $n=2k+1$ is odd. Then the facet graphs are exactly the spanning trees of $C_n$, with any orientation that has $k$ positively and $k$ negatively oriented edges. Each such facet has one Jaeger tree. For a facet graph where the edge $v_av_{a+1}$ is missing, the number of tail-edges is 
	\begin{multline*}
	\sharp\{\text{positively oriented edges between $v_0$ and $v_a$}\}\\
	+\sharp\{\text{negatively oriented edges between $v_{a+1}$ and $v_0$}\}.
	\end{multline*}
	Suppose that the first term is $0\leq j\leq i\leq k$ and the second is $i-j$. Then between $v_0$ and $v_a$, there need to be $j$ positively oriented edges, and $k-(i-j)\geq 0$ negatively oriented edges, and between $v_{a+1}$ and $v_0$ there need to be $k-j \geq 0$ positively oriented edges and $i-j$ negatively oriented edges. This also implies $a=k-i+2j$. Once we decide the value of $i$ and $j$, all choices of $j$ positively oriented edges between $v_0$ and $v_a$ and $i-j$ negatively oriented edges between $v_{a+1}$ and $v_0$ give us exactly one Jaeger tree with $i$ tail-edges.
	Hence altogether, the number of Jaeger trees in all facet graphs with $i$ tail-edges is 
	$$
	\sum_{j=0}^i \binom{k-i+2j}{j}\binom{k+i-2j}{i-j}=\sum_{j=0}^i \binom{2j}{j}\binom{2k-2j}{i-j}=\sum_{j=0}^i \binom{2j}{j}\binom{n-1-2j}{i-j},
	$$
	where the first equality is by Lemma \ref{l:binom_azonossag}.
	
	Now let us look at the case of $n=2k$. In this case, the facet graphs are orientations of $C_{2k}$ where there are exactly $k$ positively and $k$ negatively oriented edges. It is easy to see that each such facet has exactly $k$ Jaeger trees: one of the $k$ positively oriented edges can be cut. If the (positive) edge between $\overrightarrow{v_av_{a+1}}$ is cut, then this tree has $i$ tail-edges iff there are $0\leq j\leq i$ positive edges between $v_0$ and $v_a$ and $i-j$ negative edges between $v_{a+1}$ and $v_0$. This implies (as $v_av_{a+1}$ was positive) that there are $k-i+j$ negative edges between $v_0$ and $v_a$ and $k-1-j$ positive edges between $v_{a+1}$ and $v_0$ and $a=k-i+j$. 
	Hence altogether, the number of Jaeger trees with $i$ tail-edges in all facet graphs is 
	\begin{multline*}
	\sum_{j=0}^i \binom{k-i+2j}{j}\binom{k-1+i-2j}{i-j}\\
	=\sum_{j=0}^i \binom{2j}{j}\binom{2k-1-2j}{i-j}=\sum_{j=0}^i \binom{2j}{j}\binom{n-1-2j}{i-j},
	\end{multline*}
	where the first equality is by Lemma \ref{l:binom_azonossag}.
\end{proof}

\begin{lemma}\label{l:binom_azonossag}
	For any nonnegative integers $(b,c,n)$ such that $0\leq c\leq b-2n$, we have
	$$
	\sum_{a=0}^n \binom{2a}{a}\binom{b-2a}{n-a} = \sum_{a=0}^n \binom{2a+c}{a}\binom{b-c-2a}{n-a}.
	$$
\end{lemma}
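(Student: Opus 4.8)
The plan is to establish the sharper statement that, for all integers $b,c,n$ with $0\le c\le b-2n$,
\[
\sum_{a=0}^{n}\binom{2a+c}{a}\binom{b-c-2a}{n-a}=\sum_{j=0}^{n}\binom{b+1}{j}.
\]
Since the right-hand side does not involve $c$ at all, Lemma \ref{l:binom_azonossag} will follow immediately by comparing the general $c$ with the case $c=0$. I would carry out the computation with generating functions in $\Z[[z]]$.

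First I would record the only role played by the hypothesis: $c\le b-2n$ forces $b-c-2a\ge 2(n-a)\ge n-a\ge 0$ for every $0\le a\le n$, so each $(1+z)^{b-c-2a}$ is a genuine polynomial and $\binom{b-c-2a}{n-a}=[z^{n-a}](1+z)^{b-c-2a}$, while for $a>n$ the binomial $\binom{b-c-2a}{n-a}$ vanishes. Hence the sum can be rewritten, legitimately in $\Z[[z]]$ because $z/(1+z)^2$ has zero constant term, as
\[
\sum_{a=0}^{n}\binom{2a+c}{a}\binom{b-c-2a}{n-a}
=[z^{n}]\,(1+z)^{b-c}\sum_{a\ge 0}\binom{2a+c}{a}\Bigl(\tfrac{z}{(1+z)^{2}}\Bigr)^{a}.
\]

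The next step is to evaluate the inner sum via the classical ballot-number generating function $\sum_{a\ge 0}\binom{2a+c}{a}x^{a}=C(x)^{c}/\sqrt{1-4x}$, where $C(x)=(1-\sqrt{1-4x})/(2x)$ is the Catalan series (a standard identity, e.g.\ by Lagrange inversion). Substituting $x=z/(1+z)^{2}$ one finds $1-4x=(1-z)^{2}/(1+z)^{2}$, hence $\sqrt{1-4x}=(1-z)/(1+z)$ after matching constant terms, and then $C(z/(1+z)^{2})=1+z$ while $1/\sqrt{1-4x}=(1+z)/(1-z)$. Therefore the inner sum equals $(1+z)^{c+1}/(1-z)$, and plugging back gives
\[
\sum_{a=0}^{n}\binom{2a+c}{a}\binom{b-c-2a}{n-a}
=[z^{n}]\,\frac{(1+z)^{b-c}(1+z)^{c+1}}{1-z}
=[z^{n}]\,\frac{(1+z)^{b+1}}{1-z}
=\sum_{j=0}^{n}\binom{b+1}{j},
\]
which is manifestly independent of $c$.

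The only substantive input is the Catalan generating-function identity, which is well known, so I expect the main care to be required merely in checking that the formal-power-series manipulations are valid — and that is precisely what the inequality $c\le b-2n$ secures. A fully elementary alternative would be to show directly that the left-hand sum is invariant under $c\mapsto c+1$ by applying Pascal's rule to both binomial factors and telescoping; that route avoids generating functions but involves more index bookkeeping, and I would only fall back on it if a generating-function argument were deemed undesirable here.
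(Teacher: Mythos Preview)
Your proof is correct and takes a genuinely different route from the paper's. The paper fixes $c$ and proceeds by induction on $b$ and $n$: the base case $b=2n+c$ follows from the substitution $a'\mapsto n-a$, and for the inductive step they interpret each side as $\sum_{S\subseteq[b],\,|S|=n}f(S)$ for suitable counting functions $f_0,f_c$ on subsets of $[b]$, then split the sum according to whether $b\in S$ to reduce to smaller parameters. Your generating-function argument is shorter and in fact proves more, namely the closed form $\sum_{a=0}^{n}\binom{2a+c}{a}\binom{b-c-2a}{n-a}=\sum_{j=0}^{n}\binom{b+1}{j}$, from which independence of $c$ is immediate. The trade-off is that your approach imports the identity $\sum_{a\ge0}\binom{2a+c}{a}x^{a}=C(x)^{c}/\sqrt{1-4x}$ as a black box, whereas the paper's argument is entirely self-contained; on the other hand, the paper's inductive proof does not reveal the closed form. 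Your handling of the formal-power-series bookkeeping (extending the sum to all $a\ge0$, choosing the correct branch of the square root by matching constant terms) is careful and correct, and you identify precisely where the hypothesis $c\le b-2n$ is used.
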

\begin{proof}
	We fix $c$ and proceed by induction on $n$ and $b$. The base case is $b=2n + c$. In that case, notice that by substituting $a'=n-a$ in the right side, we get $$\sum_{a'=0}^n \binom{b-2a'}{n-a'}\binom{2a'}{a'},$$ which agrees with the left side.
	
	Now suppose that $b>2n+c$ and suppose that for smaller $b$, we know the statement for each triple $(b,c,n)$ that satisfies $b\geq 2n + c$.
	
	We define the following two functions on subsets of $[b]$: For $S\subseteq [b]$, let
	$f_0(S)=|\{a\leq b: |S\cap [2a]|=a\}|$ and $f_c(S)=|\{a\leq b: |S\cap [2a+c]|=a\}|$.
	
	It is easy to see that $\sum_{a=0}^n \binom{2a}{a}\binom{b-2a}{n-a}= \sum_{S\subseteq [b], |S|=n} f_0(S)$, as well as that $\sum_{a=0}^n \binom{2a+c}{a}\binom{b-c-2a}{n-a}= \sum_{S\subseteq [b], |S|=n} f_c(S)$.
	
	Now $$\sum_{S\subseteq [b], |S|=n} f_0(S)=\sum_{S\subseteq [b], b\notin S, |S|=n} f_0(S)+\sum_{S\subseteq [b], b\in S, |S|=n} f_0(S),$$
	and similarly,
	$$\sum_{S\subseteq [b], |S|=n} f_c(S)=\sum_{S\subseteq [b], b\notin S, |S|=n} f_c(S)+\sum_{S\subseteq [b], b\in S, |S|=n} f_c(S).$$
	
	Notice that whether an element $i>2n+c$ is in $S$ or not does not change neither the value of $f_0(S)$ nor the value of $f_c(S)$. (In fact, for $f_0(S)$, already the elements $i>2n$ are immaterial.) Hence $\sum_{S\subseteq [b], b\notin S, |S|=n} f_0(S)=\sum_{S\subseteq [b-1], |S|=n} f_0(S)$ and $$\sum_{S\subseteq [b], b\in S, |S|=n} f_0(S)=\sum_{S\subseteq [b-1], |S|=n-1} f_0(S),$$ and similarly for $f_c$.
	
	As we had $b>2n+c$, we have $b-1\geq 2n+c$ and $b-1\geq 2(n-1)+c$. Hence by induction, $\sum_{S\subseteq [b-1], |S|=n} f_0(S)=\sum_{S\subseteq [b-1], |S|=n} f_c(S)$ and $\sum_{S\subseteq [b-1], |S|=n-1} f_0(S)=\sum_{S\subseteq [b-1], |S|=n-1} f_c(S)$, which means that we are ready.
\end{proof}

\begin{remark} 
A cactus graph is a graph whose 2-connected components are all cycles or edges.
	Ohsugi and Tsuchiya shows \cite[Proposition 5.2]{OT} that if the 2-connected components of a graph $\cG$ are $\cG_1, \dots \cG_k$ then $h^*_\cG$ is the product of $h^*_{\cG_1}, \dots , h^*_{\cG_k}$. Hence one can also compute the $h^*$ polynomial of cactus graphs. We note that this can also be seen directly from Theorem \ref{thm:shelling_of_symm_edge_poly} as the Jaeger trees of different 2-connected components behave independently (and for a fixed ribbon structure and base point, the tour of any tree reaches a given 2-connected component at the same node-edge pair).
\end{remark}

\subsection{Complete bipartite graphs}

The $h^*$-polynomial of the symmetric edge polytope of complete bipartite graphs was determined by Higashitani, Jochemko and Micha\l{}ek \cite{arithm_symedgepoly} using Gröbner basis techniques. The computation of \cite{arithm_symedgepoly} can be divided into two parts. 
In the first part, they reduce the computation of the $h^*$-polynomial to a graph-theoretic counting problem. Then, in the second part, they solve this counting problem.
The first part of the problem can also be done using Jaeger trees, as we now explain.

In \cite{arithm_symedgepoly}, they construct a triangulation of $P_{K_{n,m}}$ using Gröbner bases. Then they interpret the simplices of the triangulation as certain oriented spanning trees of $K_{n,m}$. 
Then they prove that $h^*_i$ is equal to the number of trees in their triangulation that (for some fixed base node) have $i$ head-edges.

In \cite{semibalanced}, for each facet of $P_{K_{n,m}}$, we compute a dissection by Jaeger trees (which is actually a triangulation in that case). We use different ribbon structures for different facet graphs, but the base point can be chosen the same (by \cite[Remark 9.4]{semibalanced}).
The Jaeger trees have a simple geometric description. Moreover, they agree with the trees of \cite{arithm_symedgepoly}. Now the interpretation of $h^*_i$ as the number of Jaeger trees with $i$ head-edges is a direct application of Corollary \ref{cor:h^*_of_sym_edge_poly} (up to symmetry).

Even though it is slightly more involved to give a description of the trees of the triangulation based on the Jaeger language, we feel that the rest of the computation is more automatic, with the notion of Jaeger trees and the graph theoretic description of the $h^*$-polynomial (Corollary \ref{cor:h^*_of_sym_edge_poly}) at hand.

\subsection{Planar bipartite graphs}

Let $\cG$ be a planar bipartite graph. In this case, we can translate quantities about $P_\cG$ to quantities about the planar dual of $\cG$. 

We know that the facets of $P_\cG$ correspond to semi-balanced orientations of $\cG$. 
A planar directed graph is semi-balanced if and only if its (directed) planar dual is Eulerian; indeed, a cycle of a digraph $G$ is semi-balanced if and only if the corresponding cut in $G^*$ has equal number of edges going in the two directions.
Hence the number of facets of $P_\cG$ equals to the number of Eulerian orientations of $\cG^*$.

By \cite{semibalanced}, the number of Jaeger trees in a planar semi-balanced digraph $G$ is equal to the number of spanning arborescences of $G^*$ rooted at $r^*$ (for an arbitrary fixed choice of $r^*$).
Hence the number of Jaeger trees of $\cG$ is the sum of the numbers of spanning arborescences of Eulerian orientations of $\cG^*$.

\section{A geometric formula for the volume of $P_\cG$ for bipartite $\cG$}
\label{sec:geometric_formula_for_volume}

Finally, for bipartite graphs $\cG$ let us give a simple geometric formula for the volume of $P_\cG$.

Let $\cG$ be a bipartite graph. Then the facets of $P_\cG$ correspond to all the semi-balanced orientations of $\cG$. (That is, there are no hidden edges in any of them.)
Fix a ribbon structure and basis for $\cG$. This induces a ribbon structure and a basis for each facet graph $G_l$. As explained in Section \ref{sec:prep}, the volume of $P_\cG$ is the total numbers of Jaeger trees in the facets. 

Take an (unoriented) spanning tree $\mathcal{T}$ of the (undirected) graph $\cG$.
It is enough to tell for each spanning tree $\mathcal{T}$ of $\cG$ how many facet graphs $G_l$ are there such that (the appropriatelly oriented version of) $\mathcal{T}$ is a Jaeger tree in $G_l$ (for the fixed ribbon structure and basis).

We claim the following: for each tree $\mathcal{T}$ there is a point $p_T$ such that the facet graphs $G_l$ where $\mathcal{T}$ is a Jaeger tree correspond exactly to the facets of the symmetric edge polytope that are visible from $p_T$. 

%$p_T$ is defined the following way: 
%As $\mathcal{T}$ is a spanning tree of $\cG$, it is bipartite, hence it defines a two-coloring of the vertices of $\cG$. Hence each facet-graph $G_l$ that contains $T$ as a subgraph has to have this bipartition. Thus, for each edge $e\in\cG$ that connects vertices of $T$ of the same color, we need $e\notin G_l$, and for each edge $e\in\cG$ that connects vertices of $T$ of different color, we need $e\in G_l$. Let $E_T$ be the set of edges of $\cG-T$ that connect vertices of different color in $T$. If $E_T$ is empty, then {\color{red} Ekkor az osszehuzott elek indukaljak hogy nehany csucs azonos szinten van, de ettol meg sok iranyitas lehet jo. Ezeket aligha latjuk azonos pontbol.}
The tour of $\mathcal{T}$ is independent of the orientation of the edges. Hence in order for $\mathcal{T}$ to be a Jaeger tree in $G_l$, each edge $e\notin \mathcal{T}$ has to be oriented so that it is cut at its tail in the tour of $\mathcal{T}$. This orientation only depends on $T$.
%The edges of $T$ can then be oriented in any way with the only restriction that the orientation has to be semibalanced.

For each edge $e\notin T$, take the above mentioned orientation $\overrightarrow{e}$. Let $\mathbf{x}_{\overrightarrow{e}}$ be the vertex of the symmetric edge polytope corresponding to $\overrightarrow{e}$.

Let $p_T=(1+\delta)\frac{1}{|E - T|}\sum_{e\notin \mathcal{T}}\mathbf {x}_{\overrightarrow{e}}$,
where $0 < \delta < \frac{1}{|E|}$.

We know that each facet has a linear functional $l$ such that points of the facet have $l(\mathbf{p})=1$, while points of the polytope have $l(\mathbf{p})\leq 1$. This facet is visible from a point $\mathbf{p}$ if and only if $l(\mathbf{p})\geq 1$. 
It is enough to show that if $l$ is the fuctional defining the facet $G_l$ where $T$ is a Jaeger tree, then $l(p_T)>1$ and if $l$ is the fuctional defining the facet $G_l$ where $T$ is not a Jaeger tree, then $l(p_T) < 1$.

This is clear, since $T$ is a Jaeger tree exactly in facet graphs $G_l$ where each $e\notin \mathcal{T}$ is oriented as above.  
In these faces, $l(p_T)=1+ > 1$. If in a face, at least one edge of $E-T$ is oriented oppositely then for that edge $\overleftarrow{e}$, we have $l(\mathbf{x}_{\overrightarrow{e}})= -1$, hence $l(p_T)\leq (1+\delta)(1-\frac{2}{|E-T|}) < 1$.

%Shellable dissections for root polytopes of graphs and digraphs

%The root polytope of a bipartite graph $G=(U,W,E)$ is a polytope in $\mathbb{R}^{U\cup W}$ defined as $Conv\{\mathbf{1}_u -\mathbf{1}_w : u \in U, w \in W, uv \in E\}$. This is a well-studied polytope that has nice connections to graph theory. For example its dimension is $|U|+|W|-2$ and its maximal simplices correspond to spanning trees of the graph.

%We examine the root polytope of directed graphs $G=(V,E)$, defined as $Conv\{\mathbf{1}_h -\mathbf{1}_t : \overrightarrow{th} \in E\}$. The root polytope of a bipartite (undirected) graph is a special case in which we orient each edge towards $U$.

%The root polytope of a digraph might have dimension $|V|-1$. We are interested in the case if the dimension is $|V|-2$ (as in the undirected case). This turns out to be true if each cycle of the digraph has equal number of edges going in the two cyclic directions. We call these digraphs semibalanced.

%A reason while root polytopes of semi-balanced digraphs are interesting is because the facets of so-called symmetric edge polytopes (yet another class of polytopes associated to graphs, whose volume has a relevance in physics) are root polytopes of semi-balanced digraphs.

%We show a shellable dissection for root polytopes of semi-balanced digraphs. Combinatorially, this means giving a nice set of spanning trees. The elements of the $h$-vector also have a combinatorial meaning in terms of the spaning trees.
%We also tangentially touch the theory of greedoids in the meantime.

\bibliographystyle{plain}
\bibliography{Bernardi}

\end{document}